\documentclass [12pt]{amsart}
\usepackage[utf8]{inputenc}
\usepackage{mathdots}
\usepackage{amsmath,amssymb,amsthm,amsfonts}
\usepackage{mathtools}

\usepackage[english]{babel}
\usepackage{comment}
\usepackage{hyperref}
\usepackage{bbm}
\usepackage{mathrsfs}

\usepackage{tikz,graphicx,color}
\usepackage{tikz-cd}
\usepackage{tikz-3dplot}
\usetikzlibrary{calc}
\usetikzlibrary{arrows}
\usetikzlibrary{shapes}
\usetikzlibrary{patterns}
\usetikzlibrary{positioning}
\usepackage{epstopdf}

\usepackage{enumerate}
\usepackage[arrow]{xy}
\usepackage{diagbox}
\usepackage{subfig}
\usepackage{chngcntr} 
\usepackage{arcs}
\usepackage{xcolor}


\newtheorem{theorem}{Theorem}[section]

\newtheorem{proposition}[theorem]{Proposition}

\theoremstyle{definition}
\newtheorem{remark}[theorem]{Remark}
\theoremstyle{definition}
\newtheorem{definition}[theorem]{Definition}
\newtheorem{conjecture}[theorem]{Conjecture}

\theoremstyle{definition}

\theoremstyle{definition}
\newtheorem{example}[theorem]{Example}

\def\Acal{\mathcal{A}}\def\Ccal{\mathcal{C}}\def\Hcal{\mathcal{H}}\def\Pcal{\mathcal{P}}

\def\ubf{\mathbf{u}}\def\vbf{\mathbf{v}}\def\wbf{\mathbf{w}}\def\xbf{\mathbf{x}}


\def\Sfr{{ \mathfrak{S}}}


\def\R{\mathbb{R}}
\def\N{\mathbb{N}}
\def\Z{\mathbb{Z}}
\def\Q{\mathbb{Q}}



\newcommand\parr[1]{{({#1})}}

\def\<{{\langle}}
\def\>{{\rangle}}
\def\e{{\epsilon}}
\def\l{{\lambda}}
\def\m{{\mu}}

\def\multiset#1#2{\left(\!\left({#1\atopwithdelims..#2}\right)\!\right)}

\def\Id{\operatorname{Id}}

\def\det{{ \operatorname{det}}}

\def\proj{ \operatorname{proj}}


\usetikzlibrary{arrows,decorations.markings}

\def\TS{{\mathcal{S}}}
\def\TC{{\mathcal{O}}}
\def\x{{\xbf}}

\def\Cyc{{ \Ccal}}

\def\Pbf{\mathbf{P}}
\def\Cbf{\mathbf{C}}
\def\sgn{ \operatorname{sgn}}
\def\g{{\tilde{g}}}
\def\Nt{{\tilde{N}}}
\def\Vt{{\tilde{V}}}
\def\ut{{\tilde{u}}}
\def\vt{{\tilde{v}}}

\def\et{{\tilde{e}}}

\def\Et{{\tilde{E}}}
\def\Pt{{\tilde{P}}}
\def\pt{{\tilde{p}}}

\def\wtt{{\tilde{w}}}
\def\wind{{ \operatorname{wind}}}
\def\ee{{\Nt}}
\def\gen{h}
\def\pathst{\tilde\Pcal}
\def\outg{ {\operatorname{out}_\Nt}}
\def\inc{ {\operatorname{in}_\Nt}}

\def\ubft{{\tilde{\ubf}}}

\def\vbft{{\tilde{\vbf}}}
\def\Pbft{{\tilde{\Pbf}}}

\newcommand\pleh[1]{{[h_{#1}]}}
\newcommand\plee[1]{{[e_{#1}]}}
\def\Kbar{{\overline{K}}}

\def\wt{ \operatorname{wt}}


\usepackage{tabu}
\usepackage{booktabs}

\begin{document}
\numberwithin{equation}{section}

\title{Linear recurrences for cylindrical networks}
\author{Pavel Galashin}
\address{Department of Mathematics, Massachusetts Institute of Technology,
Cambridge, MA 02139, USA}
\email{{\href{mailto:galashin@mit.edu}{galashin@mit.edu}}}

\author{Pavlo Pylyavskyy}
\address{Department of Mathematics, University of Minnesota,
Minneapolis, MN 55414, USA}
\email{{\href{mailto:ppylyavs@umn.edu}{ppylyavs@umn.edu}}}

\date{\today}

\thanks{P.~P. was partially supported by NSF grants  DMS-1148634, DMS-1351590, and Sloan Fellowship.}

\subjclass[2010]{
Primary:
05E05, 
Secondary:
05A15 
}

\keywords{Linear recurrence,  Lindstr\"om-Gessel-Viennot method, Schur functions, cluster algebras, octahedron recurrence, plane partitions}

\begin{abstract}
We prove a general theorem that gives a linear recurrence for tuples of paths in every cylindrical network. This can be seen as a cylindrical analog of the Lindstr\"om-Gessel-Viennot theorem. We illustrate the result by applying it to Schur functions, plane partitions, and domino tilings. 
\end{abstract}
\maketitle

\setcounter{tocdepth}{1}
\tableofcontents

\section{Introduction}
 
 Given a weighted directed graph $N$, the celebrated Lindstr\"om-Gessel-Viennot theorem~\cite{GV,Lindstrom} gives a combinatorial interpretation for a certain determinant in terms of tuples of vertex-disjoint paths in $N$. Its applications in several different contexts have been studied extensively, such as: 
 \begin{itemize}
 	\item alternating sign matrices (equivalently, totally symmetric self-complementary plane partitions)~\cite{BdFZJ,ZJdF},
 	\item domino tilings of the Aztec diamond~\cite{BvL} (equivalently, states of the six-vertex model~\cite{FerrariSpohn,ZJ}, or monomials in the octahedron recurrence~\cite{Sp}),
 	\item vicious walkers model~\cite{VW1,VW2,VW3},
 	\item super-Schur functions~\cite{BrentiS}, $Q$-Schur functions~\cite{StembridgeQ}, etc.
 \end{itemize}

 In a series of papers~\cite{GP1,GP2,GP3}, we studied Zamolodchikov phenomena associated with \emph{finite $\boxtimes$ finite}, \emph{affine $\boxtimes$ finite}, and \emph{affine $\boxtimes$ affine} quivers. In particular, in the affine $\boxtimes$ finite case~\cite{GP2} we showed that the values of the $T$-system of type $\hat A\otimes A$ (equivalently, the values of the octahedron recurrence in a cylinder) satisfy a simple linear recurrence relation whose coefficients admit a nice combinatorial interpretation: they are sums over domino tilings of the cylinder with fixed \emph{Thurston height}. Similarly, in~\cite{GPCube} we show using a formula given by Carroll and Speyer~\cite{CS} that the values of the cube recurrence in a cylinder satisfy a similar linear recurrence relation, where the coefficients now are sums over \emph{groves}. 
 
 This led us to the formulation of a general theorem (Theorem~\ref{thm:recurrences}) that applies to most of the networks mentioned above. We state two different versions of it, one for arbitrary \emph{cylindrical networks} (see Definition~\ref{dfn:network}) and one for \emph{planar cylindrical networks} (see Definition~\ref{dfn:network_planar}), i.e., cylindrical networks that can be drawn in a cylinder without self intersections. The first version is more general, however, the second version gives a stronger result, that is, a shorter recurrence relation which is conjecturally minimal (see Conjecture~\ref{conj:minimal}). A closely related but different result on rationality of certain measurements taken in cylindrical networks has previously appeared in \cite[Proposition 2.7]{LP}.
 
In Section~\ref{sect:main}, we define cylindrical networks and related notions and state our main results, Theorem~\ref{thm:single_path} and its generalization, Theorem~\ref{thm:recurrences}. We then prove Theorem~\ref{thm:single_path} in Section~\ref{sect:single_path}. After that, in Section~\ref{sect:LGV} we give some background on plethysms of symmetric functions and the Lindstr\"om-Gessel-Viennot method which we then use to prove Theorem~\ref{thm:recurrences}.

In Section~\ref{sect:applications}, we give several applications of Theorem~\ref{thm:recurrences}. We start with the case of Schur functions, see Section~\ref{sect:Schur}. We show how Theorem~\ref{thm:recurrences} gives a linear recurrence for sequences of Schur polynomials of the form $f(\ell)=s_{(\l+\ell\m)}(x_1,\dots,x_n)$ where $\m$ is of rectangular shape. This gives an alternative proof to a recent result of Alexandersson~\cite{Per,Per2} which however applies to more general sequences of the form $s_{(\l+\ell\m)/(\kappa+\ell\nu)}$ where the partitions $\l,\m,\nu,\kappa$ are not assumed to be of rectangular shape. We also apply our results to lozenge tilings (see Section~\ref{sect:lozenge}) and domino tilings (Section~\ref{sect:domino}) in a strip.\footnote{We thank Per Alexandersson for pointing out his paper~\cite{Per2} to us.}   

Finally, we give two conjectures for planar cylindrical networks in Section~\ref{sect:conjectures}. The first one states that the recurrence polynomials in the second part of Theorem~\ref{thm:recurrences} have nonnegative integer coefficients (Conjecture~\ref{conj:Schur_Polya}) and positive real roots (Conjecture~\ref{conj:roots}). The second one (Conjecture~\ref{conj:minimal}) asks whether these polynomials are always \emph{minimal} if the network is strongly connected and has algebraically independent edge weights.

 \section{Main results}\label{sect:main}
 
 We start by briefly introducing our main objects of study. More precise definitions are given in Sections~\ref{sect:cyl_networks} and~\ref{sect:planar}. 
 
 Consider an acyclic directed graph $\Nt$ drawn in some horizontal strip $\TS=\{(x,y)\in\R^2\mid A\leq y\leq B\}$ in the plane such that its vertex set $\Vt$ and edge set $\Et$ are invariant with respect to the shift by some horizontal vector $\g=(m,0)\in\R^2$. Suppose in addition that we are given a shift-invariant function $\wt:\Et\to K$ assigning weights from some field $K$ of characteristic zero\footnote{For us $K$ will be either $\R$ or the field of rational functions in several variables.} to the edges of $\Nt$. We call such a weighted directed graph \emph{a cylindrical network}. We also define in an obvious way the \emph{projection} $N$ of $\Nt$ to the cylinder $\TC=\TS/\Z\g$. Thus $N$ is a weighted directed graph drawn in the cylinder. We require that all the vertices of $\Nt$ have finite degree and that for every directed path in $\Nt$ connecting a vertex $\vt$ to its shift $\vt+\ell\g$ we have $\ell>0$. 
 
 We say that a cycle $C$ in $N$ is \emph{simple} if it passes through each vertex of $N$ at most once. For a simple cycle $C$ in $N$, we define its \emph{winding number} $\wind(C)$ to be the unique integer $\ell$ such that any lift of $C$ to a path in $\Nt$ connects a vertex $\vt$ to $\vt+\ell\g$. Thus for any cylindrical network $\Nt$, \emph{any cycle in $N$ has a positive winding number}. An example of a cylindrical network can be found in Figure~\ref{fig:network_example}.

\begin{figure}
 \centering

 \scalebox{0.6}{
\begin{tikzpicture}[scale=3.0]
\tikzset{myptr/.style={decoration={markings,mark=at position 1 with %
    {\arrow[scale=3,>=stealth]{>}}},postaction={decorate}}}
\node[] (node0D) at (0.00,0.00) {};
\node[] (node0U) at (0.00,1.00) {};
\node[scale=1.25,anchor=south] (zzz1U) at (1.00,1.05) {$\ut$};
\node[scale=1.25,anchor=north] (zzz1D) at (1.00,-0.05) {$\vt-\g$};
\node[draw,ellipse] (node1D) at (1.00,0.00) {};
\node[draw,ellipse] (node1U) at (1.00,1.00) {};
\node[scale=1.25,anchor=south] (zzz2U) at (2.00,1.05) {$\ut+\g$};
\node[scale=1.25,anchor=north] (zzz2D) at (2.00,-0.05) {$\vt$};
\node[draw,ellipse] (node2D) at (2.00,0.00) {};
\node[draw,ellipse] (node2U) at (2.00,1.00) {};
\node[scale=1.25,anchor=south] (zzz3U) at (3.00,1.05) {$\ut+2\g$};
\node[scale=1.25,anchor=north] (zzz3D) at (3.00,-0.05) {$\vt+\g$};
\node[draw,ellipse] (node3D) at (3.00,0.00) {};
\node[draw,ellipse] (node3U) at (3.00,1.00) {};
\node[scale=1.25,anchor=south] (zzz4U) at (4.00,1.05) {$\ut+3\g$};
\node[scale=1.25,anchor=north] (zzz4D) at (4.00,-0.05) {$\vt+2\g$};
\node[draw,ellipse] (node4D) at (4.00,0.00) {};
\node[draw,ellipse] (node4U) at (4.00,1.00) {};
\node[] (node5D) at (5.00,0.00) {};
\node[] (node5U) at (5.00,1.00) {};
\draw[myptr] (node0U) to node[midway,below,scale=1.25] {$a$} (node1U);
\draw[myptr] (node0D) to node[midway,above,scale=1.25] {$e$} (node1D);
\draw[myptr] (node0U) to node[pos=0.25,below,scale=1.25] {$b$} (node1D);
\draw[myptr] (node0D) to node[pos=0.35,below,scale=1.25] {$d$} (node1U);
\draw[myptr] (node1U) to node[midway,below,scale=1.25] {$a$} (node2U);
\draw[myptr] (node1D) to node[midway,above,scale=1.25] {$e$} (node2D);
\draw[myptr] (node1U) to node[midway,right,scale=1.25] {$c$} (node1D);
\draw[myptr] (node1U) to node[pos=0.25,below,scale=1.25] {$b$} (node2D);
\draw[myptr] (node1D) to node[pos=0.35,below,scale=1.25] {$d$} (node2U);
\draw[myptr] (node2U) to node[midway,below,scale=1.25] {$a$} (node3U);
\draw[myptr] (node2D) to node[midway,above,scale=1.25] {$e$} (node3D);
\draw[myptr] (node2U) to node[midway,right,scale=1.25] {$c$} (node2D);
\draw[myptr] (node2U) to node[pos=0.25,below,scale=1.25] {$b$} (node3D);
\draw[myptr] (node2D) to node[pos=0.35,below,scale=1.25] {$d$} (node3U);
\draw[myptr] (node3U) to node[midway,below,scale=1.25] {$a$} (node4U);
\draw[myptr] (node3D) to node[midway,above,scale=1.25] {$e$} (node4D);
\draw[myptr] (node3U) to node[midway,right,scale=1.25] {$c$} (node3D);
\draw[myptr] (node3U) to node[pos=0.25,below,scale=1.25] {$b$} (node4D);
\draw[myptr] (node3D) to node[pos=0.35,below,scale=1.25] {$d$} (node4U);
\draw[myptr] (node4U) to node[midway,below,scale=1.25] {$a$} (node5U);
\draw[myptr] (node4D) to node[midway,above,scale=1.25] {$e$} (node5D);
\draw[myptr] (node4U) to node[midway,right,scale=1.25] {$c$} (node4D);
\draw[myptr] (node4U) to node[pos=0.25,below,scale=1.25] {$b$} (node5D);
\draw[myptr] (node4D) to node[pos=0.35,below,scale=1.25] {$d$} (node5U);
\node[anchor=east,scale=2] (dots1) at (0.00,0.50) {$\dots$};
\node[anchor=west,scale=2] (dots2) at (5.00,0.50) {$\dots$};
\end{tikzpicture}}

 \caption{\label{fig:network_example}An example of a cylindrical network $\Nt$. The edge weights are $a,b,c,d,e$. The network $N$ has two vertices $u$ and $v$.}
\end{figure}
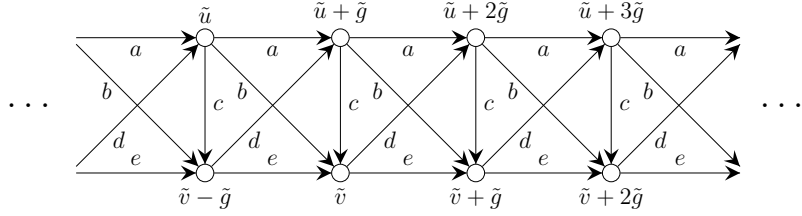

An $r$-cycle $\Cbf=(C_1,\dots,C_r)$ in $N$ is an $r$-tuple of pairwise vertex disjoint simple cycles in $N$. We set $\wt(\Cbf)=\wt(C_1)\cdots\wt(C_r)$ where $\wt(C_i)$ is the product of the edges of $C_i$. We put 
\[\wind(\Cbf)=\wind(C_1)+\wind(C_2)+\dots+\wind(C_r).\] 
The set of all $r$-cycles in $N$ is denoted by $\Cyc^r(N)$.

We now define the polynomial $Q_N(t)\in K[t]$ by
\begin{equation}\label{eq:Q_N_intro}
Q_N(t)=\sum_{r=0}^d(-1)^{d-r}\sum_{\Cbf\in\Cyc^r(N)} t^{d-\wind(\Cbf)} \wt(\Cbf).
\end{equation}
Here the degree $d$ of $Q_N(t)$ is the maximal winding number of an $r$-cycle in $N$ for $r\geq 0$. It is clear that $d$ is finite. Thus $Q_N(t)$ is a monic polynomial in $t$ of degree $d$. For instance, for the network $\Nt$ in Figure~\ref{fig:network_example}, we get $Q_N(t)=t^2-(a+e+cd)t+(ae-bd)$, see Example~\ref{ex:network}. 

Given two vertices $\ut,\vt\in\Vt$, define
\[\ee(\ut,\vt)=\sum_{\Pt} \wt(\Pt),\]
where the sum is taken over all paths $\Pt$ in $\Nt$ that start at $\ut$ and end at $\vt$, and the weight $\wt(\Pt)$ of a path is the product of weights of its edges. We are ready to state our first result:

\begin{theorem}\label{thm:single_path}
Consider a cylindrical network $\Nt$. Let $\ut$ and $\vt$ be any two vertices of $\Nt$. For $\ell\geq 0$, let $\vt_\ell=\vt+\ell\g\in\Vt$ be the shift of $\vt$, and define a sequence $f:\N\to K$ by $f(\ell):=\ee(\ut,\vt_\ell)$. Then for all but finitely many values of $\ell$, the sequence $f$ satisfies a linear recurrence with characteristic polynomial $Q_N(t)$. 
\end{theorem}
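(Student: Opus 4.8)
The plan is to translate the weighted path count $\ee(\ut,\vt_\ell)$ into a transfer-matrix computation on the cylinder $N$ and then to recognize $Q_N$ as (a reciprocal of) the characteristic determinant of that transfer matrix. First I would observe that a directed path $\Pt$ in $\Nt$ from $\ut$ to $\vt_\ell=\vt+\ell\g$ is the same datum as a walk $W$ in $N$ from $u$ to $v$ (the projections of $\ut,\vt$) together with the information of how many times it wraps around the cylinder: projecting $\Pt$ produces such a $W$, and conversely, having fixed the lift $\ut$, every walk $W$ from $u$ to $v$ lifts uniquely to a path ending at $\vt+\wind(W)\g$, where $\wind(W)$ is defined through the lift exactly as for cycles. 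Hence $f(\ell)=\sum_{W:\,\wind(W)=\ell}\wt(W)$, the sum over all walks from $u$ to $v$ of winding $\ell$. Because every cycle in $N$ has strictly positive winding, a walk of winding $\ell$ cannot contain arbitrarily many loops, so it has bounded length and $f(\ell)$ is a finite sum; this is the point where the standing positivity hypothesis on windings is used.

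Next I would record the winding additively on edges. Choosing a lift $\tilde x$ of each vertex $x$ of $N$, each edge $e\colon x\to y$ lifts from $\tilde x$ to $\tilde y+\omega(e)\g$ for a unique integer $\omega(e)$, and the windings of walks and cycles become the corresponding sums of $\omega$'s. I then form the finite matrix $M(t)$ indexed by the vertices of $N$, with $M(t)_{xy}=\sum_{e:\,x\to y}\wt(e)\,t^{\omega(e)}$. The generating function $F(t):=\sum_{\ell\ge 0}f(\ell)\,t^\ell$ equals $\sum_{k\ge 0}(M(t)^k)_{uv}=\bigl[(\Id-M(t))^{-1}\bigr]_{uv}$; this inverse exists as a formal power series because $M(0)$, which records winding-zero walks, is nilpotent (a long such walk would repeat a vertex and create a winding-zero cycle, which is forbidden), so that $\det(\Id-M(0))=1$.

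Finally I would invoke the classical cycle expansion of the determinant of $\Id$ minus a weighted digraph adjacency matrix, namely $\det(\Id-M(t))=\sum_{r\ge 0}(-1)^r\sum_{\Cbf\in\Cyc^r(N)}\wt(\Cbf)\,t^{\wind(\Cbf)}$, the sum being over collections of vertex-disjoint simple cycles. By Cramer's rule $F(t)=C(t)/\det(\Id-M(t))$ for a polynomial cofactor $C(t)$, so $F$ is rational and $\det(\Id-M(t))\cdot F(t)=C(t)$ is a polynomial. Reading off the coefficient of $t^{\ell+d}$ for $\ell$ large then yields, for all but finitely many $\ell$, the order-$d$ linear recurrence whose characteristic polynomial is the reciprocal $t^{d}\det(\Id-M(1/t))$ of the denominator; comparing this term by term with \eqref{eq:Q_N_intro} shows it equals $Q_N(t)$ up to the harmless overall sign $(-1)^d$, which does not affect the recurrence.

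The step I expect to require the most care is the last one: matching the signed, winding-graded cycle sum defining $Q_N$ with $\det(\Id-M(t))$ and then tracking the reciprocal-polynomial bookkeeping (degree $d$ coming from the maximal winding, constant term $1$ coming from the empty cycle collection, and the resulting index shift $\ell\mapsto\ell+d$) so that the characteristic polynomial of the recurrence comes out to be exactly $Q_N$ rather than some shift or reversal of it. By contrast, the transfer-matrix setup and the finiteness claims are routine once the positive-winding hypothesis has been used to guarantee the nilpotency of $M(0)$ and the local finiteness of the walk counts.
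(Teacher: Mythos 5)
You take essentially the same route as the paper. Your matrix $M(t)$ is exactly the matrix $B(t)$ of \eqref{eq:Bt}, your geometric-series inversion is the expansion $A(t)=(\Id-B(t))^{-1}=\sum_{r\geq 0}B(t)^r$ of \eqref{eq:Q_A}, your cycle expansion of $\det(\Id-M(t))$ together with the reciprocal-polynomial bookkeeping is precisely Proposition~\ref{prop:Q_N}, and the passage from rationality of the generating function to the linear recurrence is Proposition~\ref{prop:rational}; so the proof is correct in outline and in substance.

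The one statement that would fail as written is your justification of invertibility: you claim $(\Id-M(t))^{-1}$ exists as a formal power series ``because $M(0)$ is nilpotent.'' This tacitly assumes that every edge offset $\omega(e)$ is nonnegative, which need not hold for an arbitrary choice of lifts: in general $M(t)$ contains negative powers of $t$, in which case $M(0)$ is not even defined --- the paper's own Example~\ref{ex:network} has the entry $b+ct^{-1}$. The flaw is harmless here, for two reasons. First, the fact you actually need, namely coefficient-wise convergence of $\sum_{k\geq 0}M(t)^k$, already follows from the finiteness argument in your opening paragraph (every cycle winds positively, so walks of a fixed winding have bounded length); this is exactly how the paper justifies \eqref{eq:Q_A}. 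Second, the cycle expansion shows that $\det(\Id-M(t))$ is an honest polynomial in $t$ with constant term $1$ even when $M(t)$ has entries with negative powers of $t$, because collections of disjoint cycles have nonnegative total winding and only the empty collection has winding $0$; so the Cramer's-rule and reciprocal-polynomial steps go through verbatim. Alternatively, you could repair the statement by first normalizing the lifts so that all $\omega(e)\geq 0$, which is possible precisely because every cycle has positive winding (a standard potential-function argument), after which your nilpotency claim is valid.
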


We say that $\Nt$ is a \emph{planar cylindrical network} if $\Nt$ is drawn in the strip $\TS$ without self-intersections. More specifically, we require that the edges are drawn in a way that is shift-invariant so that $N$ is also drawn in the cylinder $\TC$ without self-intersections. For planar cylindrical networks, it is easy to see that every simple cycle in $N$ has winding number $1$. Thus the formula~\eqref{eq:Q_N_intro} simplifies as follows:

\begin{equation}\label{eq:Q_N_planar_intro}
	Q_N(t)=\sum_{r=0}^d(-t)^{d-r}\sum_{\Cbf\in\Cyc^r(N)} \wt(\Cbf).
\end{equation}

Therefore for a planar cylindrical network $N$, the coefficient of $(-t)^{d-r}$ in $Q_N(t)$ is a polynomial in the edge weights with nonnegative coefficients.

We now pass to the ``Lindstr\"om-Gessel-Viennot'' part of our results. A few more definitions are in order.

\begin{definition}
	An \emph{$r$-vertex} $\vbft=(\vt_1,\dots,\vt_r)$ in $\Nt$ is an $r$-tuple of distinct vertices of $\Nt$. An \emph{$r$-path} $\Pbft=(\Pt_1,\dots,\Pt_r)$ is an $r$-tuple of paths in $\Nt$ that are pairwise vertex disjoint, and we set $\wt(\Pbft)=\wt(\Pt_1)\cdots\wt(\Pt_r)$. If for $1\leq i\leq r$, the path $\Pt_i$ starts at $\ut_i$ and ends at $\vt_i$ then $\ubft=(\ut_1,\dots,\ut_r)$ and $\vbft=(\vt_1,\dots,\vt_r)$ are called the \emph{start} and the \emph{end} of $\Pbft$. We denote by $\pathst(\ubft,\vbft)$ the collection of all $r$-paths in $\Nt$ that start at $\ubft$ and end at $\vbft$, and we set 
	\[\ee(\ubft,\vbft):=\sum_{\Pbft\in\pathst(\ubft,\vbft)} \wt(\Pt).\] 
\end{definition}

Given an $r$-vertex $\vbft=(\vt_1,\dots,\vt_r)$ and a permutation $\sigma\in\Sfr_r$ of $[r]$, we denote by $\sigma\vbft=(\vt_{\sigma(1)},\dots,\vt_{\sigma(r)})$ the action of $\sigma$ on $\vbft$.

For each $r\geq 1$, we introduce certain polynomials $Q^\pleh{r}_N(t)$ and $Q^\plee{r}_N(t)$ of degrees $\multiset{d}{r}$ and $d\choose r$ respectively. To give a precise definition, suppose that we are given the roots $\gamma_1,\gamma_2,\dots,\gamma_d\in \Kbar$ of $Q_N(t)$, where $\Kbar$ denotes the algebraic closure of $K$. Thus we can write
\[Q_N(t)=(t-\gamma_1)(t-\gamma_2)\cdots(t-\gamma_d).\]
For $r\geq 1$, we set
\begin{equation}\label{eq:plee_pleh_intro}
\begin{split}
Q^\pleh{r}(t)&=\displaystyle\prod_{1\leq i_1\leq i_2\leq \dots\leq i_r\leq d} (t-\gamma_{i_1}\gamma_{i_2}\cdots\gamma_{i_r});\\
Q^\plee{r}(t)&=\displaystyle\prod_{1\leq i_1< i_2<\dots< i_r\leq d} (t-\gamma_{i_1}\gamma_{i_2}\cdots\gamma_{i_r}).\\
\end{split}
\end{equation}

For example, $Q^\pleh{1}(t)=Q^\plee{1}(t)=Q(t)$ and $Q^\plee{d}(t)=t-\alpha_d$, where $\alpha_d$ denotes the constant term of $Q(t)$. On the other hand, $Q^\pleh{d}(t)$ is a polynomial of degree $\multiset{d}{d}={2d-1\choose d}$. It is clear from~\eqref{eq:plee_pleh_intro} that $Q^\pleh{r}(t)$ is always divisible by $Q^\plee{r}(t)$ in $\Kbar[t]$. It is non-trivial to show that $Q^\pleh{r}(t)$, $Q^\plee{r}(t)$, and their ratio all belong to $K[t]$. Their coefficients are polynomial expressions in the coefficients of $Q_N(t)$ described explicitly in terms of plethysms of symmetric functions, see Section~\ref{sect:plethysm}. 
  
\begin{theorem}\label{thm:recurrences}
	Let $\Nt$ be a cylindrical network and let $\ubft=(\ut_1,\dots,\ut_r)$ and $\vbft=(\vt_1,\dots,\vt_r)$ be two $r$-vertices in $\Nt$. For $\ell\geq 0$, let $\vbft_\ell=\vbft+\ell\g=(\vt_1+\ell\g,\dots,\vt_r+\ell\g)$. Define the sequence $f:\N\to K$ by 
	\[f(\ell)=\sum_{\sigma\in\Sfr_r} \sgn(\sigma)\ee(\ubft,\sigma\vbft_\ell),\]
	where $\sgn(\sigma)$ denotes the \emph{sign} of $\sigma$. 
	\begin{enumerate}
		\item\label{item:thm:pleh} For any cylindrical network $\Nt$, the sequence $f$ satisfies a linear recurrence with characteristic polynomial $Q_N^\pleh{r}(t)$.
		\item\label{item:thm:plee} If $\Nt$ is a planar cylindrical network, the sequence $f$ satisfies a linear recurrence with characteristic polynomial $Q_N^\plee{r}(t)$.
	\end{enumerate}
	In both of the cases, $f$ satisfies the recurrence for all but finitely many values of $\ell$.
\end{theorem}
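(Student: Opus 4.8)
The plan is to reduce the signed sum defining $f$ to a single $r\times r$ determinant, apply Theorem~\ref{thm:single_path} entrywise, and then transport the recurrence through the determinant using the algebra of symmetric and exterior powers of the associated shift operator. Writing $h_{ij}(\ell):=\ee(\ut_i,\vt_j+\ell\g)$ for the single-path sums, I would first invoke the Lindstr\"om--Gessel--Viennot involution set up in Section~\ref{sect:LGV}: the standard tail-swapping sign-reversing involution at the first intersection pairs off and cancels all intersecting families, yielding
\[
f(\ell)=\sum_{\sigma\in\Sfr_r}\sgn(\sigma)\,\ee(\ubft,\sigma\vbft_\ell)=\det\bigl(h_{ij}(\ell)\bigr)_{i,j=1}^r.
\]
This involution is legitimate because $\Nt$ is acyclic with finite-degree vertices, so every path is finite; the identity holds for every $\ell$ and reduces the theorem to a statement about the determinant of a matrix whose entries are sequences governed by $Q_N$.

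For part~\eqref{item:thm:pleh} I would argue purely at the level of linear recurrences. By Theorem~\ref{thm:single_path}, for all large $\ell$ each entry $h_{ij}(\cdot)$ lies in the solution space $W$ of the recurrence with characteristic polynomial $Q_N(t)=\prod_k(t-\gamma_k)$; let $\mathcal C$ be a companion operator on $W$, so $\det(t\Id-\mathcal C)=Q_N(t)$. Expanding $\det(h_{ij}(\ell))$ as a signed sum of products $h_{1\sigma(1)}(\ell)\cdots h_{r\sigma(r)}(\ell)$, the crucial observation is that pointwise multiplication of sequences is commutative and intertwines the shift $S$ with $\mathcal C^{\otimes r}$; since the product is symmetric, it factors through the symmetric power, so each product lies in the image of $\operatorname{Sym}^r W$ and is annihilated by $Q_N^\pleh{r}(S)$, where $Q_N^\pleh{r}(t)=\det(t\Id-\operatorname{Sym}^r\mathcal C)=\prod_{1\le i_1\le\cdots\le i_r\le d}(t-\gamma_{i_1}\cdots\gamma_{i_r})$. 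As $f$ is a $K$-linear combination of such products, it too is annihilated by $Q_N^\pleh{r}(S)$ for all large $\ell$. The coefficients of $Q_N^\pleh{r}$ are symmetric in $\gamma_1,\dots,\gamma_d$, hence polynomial in the coefficients of $Q_N$ via the explicit plethysm formulas of Section~\ref{sect:plethysm}, so $Q_N^\pleh{r}\in K[t]$ and the recurrence is genuinely over $K$.

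For the planar part~\eqref{item:thm:plee} the product argument must be sharpened to an exterior power, and this is where planarity enters. Building on the transfer-matrix construction underlying Theorem~\ref{thm:single_path}, in the planar case the $d$ strands crossing a vertical cut give a $d\times d$ matrix $T$ with $\det(t\Id-T)=Q_N(t)$, and --- because there is a well-defined first cut past each $\ut_i$ and last cut before each $\vt_j$ --- one can arrange the entries to separate as $h_{ij}(\ell)=\rho_i^{\!\top}T^\ell\lambda_j$ for all large $\ell$, with $\rho_i$ depending only on $\ut_i$ and $\lambda_j$ only on $\vt_j$. Collecting the $\rho_i^{\!\top}$ into an $r\times d$ matrix $R$ and the $\lambda_j$ into a $d\times r$ matrix $L$, two applications of the Cauchy--Binet formula together with $\bigwedge^r(T^\ell)=(\bigwedge^r T)^\ell$ give
\[
f(\ell)=\det\bigl(R\,T^\ell L\bigr)=\sum_{|S|=|S'|=r}\det(R_{[r],S})\,\bigl((\textstyle\bigwedge^r T)^\ell\bigr)_{S,S'}\,\det(L_{S',[r]}).
\]
Thus $f(\ell)$ is a matrix coefficient of $(\bigwedge^r T)^\ell$ and is annihilated by $\det(t\Id-\bigwedge^r T)=Q_N^\plee{r}(t)$, again for all but finitely many $\ell$.

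I expect two genuine obstacles. For part~\eqref{item:thm:pleh}, the bookkeeping of multiplicities when $Q_N$ has repeated roots: one must verify that the polynomial-times-exponential contributions collapse to exactly the multiplicities prescribed by $Q_N^\pleh{r}$, which is precisely what identifying the product with $\operatorname{Sym}^r\mathcal C$ (rather than the larger $\mathcal C^{\otimes r}$) accomplishes, and this is the role of the symmetric-function/plethysm machinery of Section~\ref{sect:plethysm}, which simultaneously delivers $Q_N^\pleh{r},Q_N^\plee{r}\in K[t]$. For part~\eqref{item:thm:plee}, the real difficulty is establishing the separated form $h_{ij}(\ell)=\rho_i^{\!\top}T^\ell\lambda_j$ and checking that the same $d\times d$ transfer matrix reproduces $Q_N$; this decoupling of boundary data from the winding is exactly the structural input that fails for non-planar networks, and it is what forces the weaker symmetric-power bound there and permits the stronger exterior-power bound here. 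The phrase ``all but finitely many $\ell$'' is then inherited directly from Theorem~\ref{thm:single_path}, since all of the above is valid once $\ell$ exceeds the finite threshold beyond which every entry $h_{ij}$ obeys its recurrence.
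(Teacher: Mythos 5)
Your part~\eqref{item:thm:pleh} is correct and is essentially the paper's argument: by Theorem~\ref{thm:LGV} the signed sum equals $\det\bigl(h_{ij}(\ell)\bigr)$, each entry satisfies the $Q_N$-recurrence by Theorem~\ref{thm:single_path}, $r$-fold products of such sequences satisfy the $Q_N^\pleh{r}$-recurrence, and signed sums preserve this (the paper cites these last two facts as Propositions~\ref{prop:product} and~\ref{prop:sum}; your companion-operator/symmetric-power sketch is the standard proof of the former).

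For part~\eqref{item:thm:plee}, your mechanism is also the paper's mechanism --- a separated ``transfer-matrix'' form for the entries, Cauchy--Binet, multiplicativity $\Lambda^r(T^\ell)=(\Lambda^r T)^\ell$, and the fact that the characteristic polynomial of $\Lambda^r T$ is the plethysm $Q_N^\plee{r}$ (Proposition~\ref{prop:wedge}). But the one input you assume, namely a matrix $T$ with $h_{ij}(\ell)=\rho_i^{\top}T^\ell\lambda_j$ and $\det(t\Id-T)=Q_N(t)$, is exactly what you never establish, and you flag it yourself as ``the real difficulty.'' This is a genuine gap, and it is where the paper does its real work. The paper introduces the notion of a \emph{local} network: a choice of lift $L$ of $V$ such that every edge leaving $L$ ends in $L$ or $L+\g$, i.e.\ $B(t)=C+tD$ with $C$ nilpotent. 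Locality yields the factorization $\Id-C-tD=(\Id-C)(\Id-tS)$ with $S=(\Id-C)^{-1}D$, hence the separated form $A^{\parr\ell}=S^\ell(\Id-C)^{-1}$; note that $S$ is indexed by \emph{all} of $V$ (size $p$, not $d$), its characteristic polynomial being $t^{p-d}Q_N(t)$, the extra zero eigenvalues being harmless. The planarity hypothesis is then consumed by Proposition~\ref{prop:local}: every planar cylindrical network is local, proved by induction (contracting a cover relation) to build a height function $z:\Vt\to\Z$ with $z(\vt+\g)=z(\vt)+1$ and $z(\ut)\leq z(\vt)\leq z(\ut)+1$ along every edge.

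Two further points on your sketch. First, your ``vertical cut crossed by $d$ strands'' is not a structure a general planar cylindrical network possesses: $d$ is the maximal number of pairwise disjoint cycles, and there is no reason a cut should meet exactly $d$ vertices, so $\det(t\Id-T)=Q_N(t)$ is false as stated (only the weaker statement with extra factors of $t$ can hold); more seriously, the existence of \emph{any} cut through which all paths factor in transfer-matrix form is precisely the locality statement and requires proof. Second, even granting locality, the case where the $\ut_i,\vt_j$ do not lie on the chosen lift needs the paper's three-segment decomposition of each $r$-path (using that the height $z$ is weakly increasing and Lipschitz along paths, so only finitely many entry/exit configurations occur, after which Proposition~\ref{prop:sum} applies); your parenthetical about ``a well-defined first cut past each $\ut_i$'' gestures at this but does not carry it out.
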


\section{Linear recurrences for single paths}\label{sect:single_path}
In this section, we first describe some standard material on linear recurrences in Section~\ref{sect:linear_background}, then we define cylindrical networks and related notions rigorously in Section~\ref{sect:cyl_networks}. We then proceed to the proof of Theorem~\ref{thm:single_path} in Section~\ref{sect:single_path}.

\subsection{Background on linear recurrences and characteristic polynomials.}\label{sect:linear_background}
We briefly recall some well known algebraic facts, mostly following~\cite{EC1}. Let $K$ be a field of characteristic zero and $f:\N\to K$ be a sequence of elements of $K$. We say that $f$ \emph{satisfies a linear recurrence} if there exist some elements $\alpha_1,\alpha_2,\dots,\alpha_d\in K$  such that for all but finitely many values of $n\in \N$ we have
\[f(n+d)-\alpha_{1}f(n+d-1)+\dots+(-1)^d \alpha_df(n)=0.\]
The \emph{characteristic polynomial} of this linear recurrence is the polynomial $Q(t)\in K[t]$ defined by 
\[Q(t)=t^d-\alpha_{1}t^{d-1}+\dots+(-1)^d \alpha_d.\]
\begin{proposition}[{\cite[Theorem~4.1.1]{EC1}}]\label{prop:rational}
	A sequence $f:\N\to K$ satisfies a linear recurrence with characteristic polynomial $Q(t)$ if and only if there exists a polynomial $P(t)\in K[t]$ such that the generating function for $f$ is a rational formal power series:
\[\sum_{n\in\N} f(n) t^n=\frac{P(t)}{R(t)},\]
where the polynomials $R(t)$ and $Q(t)$ are \emph{palindromic images} of each other:
\[t^d Q(1/t)=R(t).\]
\end{proposition}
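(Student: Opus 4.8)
The plan is to prove the equivalence by a single direct computation with the generating function, exploiting the fact that multiplication by $R(t)$ converts the recurrence into a polynomiality condition. First I would write out the palindromic image explicitly. Setting $\alpha_0=1$, we have
\[R(t)=t^d Q(1/t)=\sum_{j=0}^d (-1)^j\alpha_j t^j,\]
so that $R(0)=\alpha_0=1\neq 0$. This last fact is the structural point that makes the statement work: since $R$ has nonzero constant term it is a unit in the formal power series ring $K[[t]]$, so the assertion $F(t)=P(t)/R(t)$ (with $F(t)=\sum_{n\in\N} f(n)t^n$) is equivalent, with no ambiguity, to the polynomial identity $R(t)F(t)=P(t)$.

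Next I would extract the coefficient of $t^N$ in the product $R(t)F(t)$. Adopting the convention $f(m)=0$ for $m<0$, this coefficient is
\[c_N:=\sum_{j=0}^d (-1)^j\alpha_j\, f(N-j).\]
For $N\geq d$ the substitution $n=N-d$ gives $c_N=\sum_{j=0}^d(-1)^j\alpha_j f(n+d-j)$, which is precisely the left-hand side of the defining recurrence. Thus the assertion that $f$ satisfies the recurrence for all but finitely many $n$ is literally the assertion that $c_N=0$ for all sufficiently large $N$, i.e.\ that $R(t)F(t)$ has only finitely many nonzero coefficients and hence is a polynomial, which I would name $P(t)$. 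This proves the forward implication, and the converse is read off the same identity: if $F(t)=P(t)/R(t)$ with $P\in K[t]$, then $R(t)F(t)=P(t)$ has finitely many nonzero coefficients, so $c_N=0$ once $N$ exceeds $\deg P$, giving the recurrence for all but finitely many $n$.

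There is no serious obstacle here; the only point requiring care is the bookkeeping around the ``all but finitely many'' qualifier. Because the recurrence is assumed to hold only eventually, $c_N$ may be nonzero for small $N$, so $P(t)$ can have arbitrarily large degree and cannot be bounded a priori. I would therefore simply \emph{define} $P(t)$ to be whatever polynomial $R(t)F(t)$ turns out to be, and match the threshold index of the recurrence to $\deg P-d$. The hypothesis $\operatorname{char} K=0$ plays no role in this particular proposition (it is needed elsewhere in the paper), whereas the condition $R(0)\neq 0$ genuinely is essential: it is what pins down the specific palindromic denominator and distinguishes the statement from the weaker claim that $F$ is merely some rational function.
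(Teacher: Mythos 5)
Your proof is correct, and since the paper offers no argument of its own for this proposition --- it simply cites \cite[Theorem~4.1.1]{EC1} --- your computation is precisely the standard one found there: multiplying $F(t)$ by $R(t)$ and observing that the eventual vanishing of the coefficients $c_N$ is, verbatim, the recurrence. You also correctly isolate the one structural point, $R(0)=\alpha_0=1\neq 0$, which makes $R$ a unit in $K[[t]]$ and pins the denominator down as the palindromic image of $Q$.
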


The following is an obvious fact which we will repeatedly use:
\begin{proposition}\label{prop:sum}
	Suppose that $f_1,f_2,\dots,f_r:\N\to K$ each satisfy a linear recurrence with characteristic polynomial $Q(t)$. Then for any $c_1,\dots,c_r\in K$, the sequence $f=(c_1f_1+c_2f_2+\dots+c_rf_r):\N\to K$ defined by 
	\[f(\ell)=c_1f_1(\ell)+c_2f_2(\ell)+\dots+c_rf_r(\ell)\]
	satisfies a linear recurrence with the same characteristic polynomial $Q(t)$.
\end{proposition}

\subsection{Cylindrical networks}\label{sect:cyl_networks}

Recall that $\TC=\TS/\Z\g$ denotes the cylinder and denote the canonical projection map by $\proj:\TS\to\TC$. Let $\gen:[0,1]\to\TC$ be a loop defined by $\gen(t)=\proj(\g t)$ for $t\in[0,1]$. Thus the element $[\gen]\in\pi_1(\TC)$ generates the fundamental group $\pi_1(\TC)$. For a loop $p:[0,1]\to\TC$, define its \emph{winding number} $\wind(p)$ to be the unique integer $\ell$ such that $[\gen]^\ell=[p]$ in $\pi_1(\TC)$. 

\begin{definition}\label{dfn:network}
A triple $\Nt=(\Vt,\Et,\wt)$ is a \emph{cylindrical network} if:
\begin{enumerate}
	\item the \emph{vertex set $\Vt$ of $\Nt$}  is a discrete subset of $\TS$ that is invariant with respect to the shift by $\g$: $\Vt=\Vt+\g$;
	\item the \emph{edge set} $\Et\subset (\Vt\times\Vt)$ is a set of ordered pairs $(\ut,\vt)$ of elements $\ut,\vt\in\Vt$ such that if $\et=(\ut,\vt)\in\Et$ then $\et+\g:=(\ut+\g,\vt+\g)\in\Et$;
	\item $\wt:\Et\to K$ is an assignment of \emph{weights} from some field $K$ of characteristic zero to the edges of $\Nt$ satisfying $\wt(\et+\g)=\wt(\et)$;
	\item\label{item:finite} for every vertex $\vt\in\Vt$ of $\Nt$, the sets $\outg(\vt)$ and $\inc(\vt)$ of outgoing and incoming edges of $\vt$ in $\Nt$ are finite.
	\item if for some $\ell\in \Z$ there exists a directed path in $\Nt$ from $\vt\in\Vt$ to $\vt+\ell\g$ then $\ell>0$. In particular, $\Nt$ is acyclic.
\end{enumerate}
\end{definition}
We do not allow multiple edges in $\Nt$, however, replacing several edges with the same endpoints by a single edge whose weight equals the sum of their weights does not change any of the quantities we are interested in.

We view every element $\et=(\ut,\vt)\in\Et$ as a linear path $\et:[0,1]\to \TS$ starting at $\ut$ and ending at $\vt$ defined by $\et(t)=(1-t)\ut+t\vt$. Note that the paths corresponding to different edges may intersect each other. 

We let $N=(V,E,\wt)$ be the \emph{projection} of $\Nt$ defined by $V=\proj(\Vt), E=\proj(\Et),$ and $\wt:E\to K$ is defined via $\wt(\proj(\et))=\wt(\et)$. Since $\Vt$ is discrete, the set $V$ is finite, and the set $E$ is finite since the degrees of the vertices of $\Nt$ are finite.

For $\ut,\vt\in\Vt$, define 
\[\ee(\ut,\vt)=\sum_{\Pt\in\pathst(\ut,\vt)} \wt(\Pt),\]
where the sum is taken over the set $\pathst(\ut,\vt)$ of all directed paths $\Pt$ from $\ut$ to $\vt$ in $\Nt$. It is easy to see that for any two vertices $\ut,\vt\in\Vt$, the set $\pathst(\ut,\vt)$ is finite.

Recall that if $C$ is a directed cycle in $N$, we say that $C$ is \emph{simple} if each vertex of $N$ occurs in it at most once. Thus $C$ has a positive winding number $\wind(C)\geq 1$. 

\subsection{Linear recurrences for single paths}\label{sect:single_path}
In this section, we prove Theorem~\ref{thm:single_path}.

\def\lift{l}
\def\Lift{L}
\def\liftt{{\tilde y}}
\def\whirl{\wind}
For each vertex $v\in V$, choose its \emph{lift} $\lift(v)\in\Vt$ arbitrarily so that $\proj(\lift(v))=v$. Let $\Lift=\{\lift(v)\mid v\in V\}$ and denote by $\liftt_1,\dots,\liftt_p$ the elements of $\Lift$ in some order. 

Recall that the map $\wt$ takes values in some field $K$. For simplicity, we assume in this section that $K=\Q(\x)$ is the field of rational functions in variables $\x=(x_e)_{e\in E}$ and $\wt(\et)=x_{\proj(\et)}$. Define a $p\times p$ matrix $B(t)$ with entries $b_{ij}(t)\in\Z[\x;t^{\pm1}]$ as follows: 
\begin{equation}\label{eq:Bt}
b_{ij}(t)=\sum_{\et=(\liftt_i,\liftt_j+\ell\g)\in\Et} t^{\ell} \wt(\et),
\end{equation}
where  the sum is taken over all edges $\et\in\Et$ that connect $\liftt_i$ to some vertex $\liftt_j+\ell\g$ in $\liftt_j+\Z\g$. By part~\eqref{item:finite} of Definition~\ref{dfn:network}, there are only finitely many such edges in $\Nt$. 

We denote 
\begin{equation}\label{eq:Q_A}
Q_\Lift(t)=\det(\Id-B(t));\quad A(t)=(\Id-B(t))^{-1}=\sum_{r\geq 0} B(t)^r.
\end{equation}
Thus $Q_\Lift(t)$ is a certain polynomial in $\Z[\x;t^{\pm1}]$ and $A(t)$ is a matrix with entries in the field of rational functions $\Q(\x;t)$. It follows that the $(i,j)$-th entry $a_{ij}(t)$ of $A(t)$ equals
\[a_{ij}(t)=\sum_\Pt t^{\ell} \wt(\Pt),\]
where  the sum is taken over all paths that start at $\liftt_i$ and end at $\liftt_j+\ell\g$ for some $\ell\in\Z$. Note also that the series expansion~\eqref{eq:Q_A} of $A(t)$ is a well-defined power series because all the cycles in $N$ have positive winding number and thus $B(t)^r$ is divisible by large powers of $t$ for large values of $r$. This also shows that $\Id-B(t)$ is an invertible matrix because its inverse $A(t)$ is well defined. In fact, one can write down a formula for the determinant $Q_\Lift(t)$ of $\Id-B(t)$ explicitly:

\begin{proposition}\label{prop:Q_N}
	The polynomial $Q_\Lift(t)$ does not depend on the choice of $\Lift$. We have
	
	\begin{equation}\label{eq:Q_Lift}
		Q_\Lift(t)=(-t)^dQ_N(1/t),
	\end{equation}
	where the polynomial $Q_N(t)$ is given by~\eqref{eq:Q_N_intro}.
\end{proposition}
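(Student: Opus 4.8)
The plan is to read $B(t)$ as the weighted adjacency matrix of an auxiliary digraph $G$ on the vertex set $\{\liftt_1,\dots,\liftt_p\}$, where the arc $i\to j$ carries weight $b_{ij}(t)$, and then to apply the standard cycle expansion of a determinant of the shape $\det(\Id-B)$. Expanding $\det(\Id-B(t))$ over permutations and regrouping the terms by their support and cycle type yields the formal identity
\[\det(\Id-B(t))=\sum_{L}(-1)^{r(L)}\prod_{C\in L}\wt_G(C),\]
where $L$ ranges over all collections of pairwise vertex-disjoint directed cycles in $G$, $r(L)$ is the number of cycles in $L$, and $\wt_G(C)$ denotes the product of the arc weights $b_{ij}(t)$ along $C$. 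This is a coefficient-level identity in $\Z[\x;t^{\pm1}]$, so no convergence of the series~\eqref{eq:Q_A} is needed at this stage.

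The main work is to translate this expansion into the language of cycles in $N$. Since each $b_{ij}(t)=\sum_{\et}t^{\ell}\wt(\et)$ is itself a sum over edges $\et=(\liftt_i,\liftt_j+\ell\g)$ of $\Nt$, choosing one term in each factor of $\wt_G(C)$ amounts to choosing, for each arc of $C$, a specific edge of $\Nt$. First I would check that a cycle $C=(i_1\to\cdots\to i_k\to i_1)$ in $G$ together with such edge choices projects under $\proj$ to a simple cycle in $N$ through the distinct vertices $\proj(\liftt_{i_1}),\dots,\proj(\liftt_{i_k})$, and that this assignment is a bijection between the monomials in the expansion and the $r$-cycles $\Cbf\in\Cyc^r(N)$. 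The key point, and the step I expect to be the main obstacle, is verifying that the total power of $t$ collected around $C$ equals its winding number: if the chosen edges record shifts $\ell_1,\dots,\ell_k$, then lifting $C$ edge by edge and repeatedly applying the shift-invariance of $\Et$ shows that the lift connects $\liftt_{i_1}$ to $\liftt_{i_1}+(\ell_1+\cdots+\ell_k)\g$, so that $\sum_s\ell_s=\wind(C)$ by the very definition of the winding number. Summing over the cycles of $L$ then contributes the sign $(-1)^{r}$ together with $t^{\wind(\Cbf)}\wt(\Cbf)$, where $\Cbf$ is the resulting $r$-cycle.

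Combining these steps gives
\[Q_\Lift(t)=\det(\Id-B(t))=\sum_{r\geq0}(-1)^{r}\sum_{\Cbf\in\Cyc^r(N)}t^{\wind(\Cbf)}\wt(\Cbf).\]
The right-hand side is expressed entirely in terms of cycles of $N$ and makes no reference to the chosen lifts $\Lift$, which immediately proves that $Q_\Lift(t)$ is independent of the choice of $\Lift$. Finally, a direct substitution using that $d$ is the maximal winding number gives
\[(-t)^dQ_N(1/t)=\sum_{r}(-1)^{d}(-1)^{d-r}\sum_{\Cbf}t^{d}\,t^{\wind(\Cbf)-d}\wt(\Cbf)=\sum_{r}(-1)^{r}\sum_{\Cbf}t^{\wind(\Cbf)}\wt(\Cbf),\]
which is exactly the expression just obtained for $Q_\Lift(t)$, establishing~\eqref{eq:Q_Lift} and completing the proof.
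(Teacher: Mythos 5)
Your proof is correct and follows essentially the same route as the paper's: the paper also expands $\det(\Id-B(t))$ over permutations and decomposes each permutation into cycles, stating that this ``immediately yields'' the identity~\eqref{eq:Q_Lift}. What you have written out explicitly --- the bijection between disjoint cycle collections in the auxiliary graph and $r$-cycles in $N$, the shift-invariance argument identifying the accumulated power of $t$ with the winding number, and the final palindromic substitution --- is precisely the verification the paper leaves to the reader.
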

\begin{proof}
By definition, we have
\[Q_\Lift(t)=\det(\Id-B(t))=\sum_{\sigma\in\Sfr_n} \sgn(\sigma) H_\sigma,\]
where $H_\sigma=\prod_{i=1}^p (\Id-B(t))_{i,\sigma(i)}$. Decomposing $\sigma$ into cycles immediately yields~\eqref{eq:Q_Lift}. 
\end{proof}

\begin{example}\label{ex:network}
	Consider the network $\Nt$ in Figure~\ref{fig:network_example} together with a choice of $\Lift=\{\liftt_1=\ut,\liftt_2=\vt\}$.

We have
\[B(t)=\begin{pmatrix}
       	at& b+ct^{-1}\\
       	dt^2 & et
       \end{pmatrix};\quad \Id-B(t)=\begin{pmatrix}
       	1-at& -b-ct^{-1}\\
       	-dt^2 & 1-et
       \end{pmatrix}.\]
Thus 
\[Q_\Lift(t)=\det(\Id-B(t))=1-(a+e+cd)t+(ae-bd)t^2.\]
We see that in~\eqref{eq:Q_N_intro}, there is one (empty) $0$-cycle with weight $t^2$, four $1$-cycles with weights $-at,-et,-cdt, -bd$, and one $2$-cycle with weight $ae$, and thus 
\[Q_N(t)=t^2-(a+e+cd)t+(ae-bd).\]
\end{example}

\begin{proof}[Proof of Theorem~\ref{thm:single_path}]
	We can choose $\Lift$ so that $\liftt_i=\ut$ and $\liftt_j=\vt$ for some $1\leq i,j\leq p$. Let $(g_{ij}(\ell_0),g_{ij}(\ell_0+1),\dots)$ be a sequence of polynomials in $\x$ defined by 
	\[a_{ij}(t)=\sum_{\ell\geq \ell_0} g_{ij}(\ell) t^\ell.\]
	Then clearly we have $f(\ell)=g_{ij}(\ell)$ for $\ell\geq 0$. Since $a_{ij}(t)$ is a rational function with denominator $Q_\Lift(t)$, it follows by Propositions~\ref{prop:rational} and~\ref{prop:Q_N} that the sequence $f$ satisfies a linear recurrence with characteristic polynomial $Q_N(t)$.
\end{proof}

\section{Linear recurrences for tuples of paths}\label{sect:LGV}
In the previous section, we have established Theorem~\ref{thm:single_path} that gave a linear recurrence relation for single paths in $\Nt$. We now want to give a proof to Theorem~\ref{thm:recurrences} for $r$-paths in $\Nt$. 

\subsection{Background on plethysms}\label{sect:plethysm}

Consider a monic polynomial 
\[Q(t)\in K[t];\quad Q(t)=t^d-\alpha_1t^{d-1}+\dots+(-1)^d\alpha_d.\] 
It factors as a product of linear terms in the algebraic closure $\Kbar$ of $K$: $Q(t)=\prod_{j=1}^d (t-\gamma_j)$, where $\gamma_1,\dots,\gamma_d\in\Kbar$. The coefficient $\alpha_k$ of $Q(t)$ equals 
\[\alpha_k=e_k(\gamma_1,\dots,\gamma_d):=\sum_{1\leq i_1<i_2<\dots<i_k\leq d} \gamma_{i_1}\cdots\gamma_{i_k}.\]
Here $e_k$ is the $k$-th \emph{elementary symmetric polynomial}, see~\cite[Section~7.4]{EC2}. Recall also that the \emph{complete homogeneous symmetric polynomial} $h_k(\gamma_1,\dots,\gamma_d)$ is given by
\[h_k(\gamma_1,\dots,\gamma_d):=\sum_{1\leq i_1\leq i_2\leq \dots\leq i_k\leq d} \gamma_{i_1}\cdots\gamma_{i_k}.\]
Let $r\geq 1$ be an integer and consider the polynomials
\begin{eqnarray}
Q^\pleh{r}(t)&:=&\prod_{1\leq j_1\leq j_2\leq \dots\leq j_k\leq d} (t-\gamma_{i_1}\gamma_{i_2}\cdots\gamma_{i_k});\\
Q^\plee{r}(t)&:=&\prod_{1\leq j_1< j_2< \dots< j_k\leq d} (t-\gamma_{i_1}\gamma_{i_2}\cdots\gamma_{i_k}).\label{eq:plee_def}
\end{eqnarray}
The coefficient of $t^{D-k}$ (where $D$ is the degree of the corresponding polynomial) in $Q^\pleh{r}(t)$ equals $(-1)^ke_k[h_r](\gamma_1,\dots,\gamma_d)$, where the polynomial $e_k[h_r]$ denotes the \emph{plethysm} of $e_k$ with $h_r$. Similarly, the coefficient of $t^{D-k}$ in of $Q^\plee{r}(t)$ equals $(-1)^ke_k[e_r](\gamma_1,\dots,\gamma_d)$. We refer the reader to~\cite[Definition~A2.6]{EC2} for the definition of a plethysm. Loosely speaking, given two symmetric functions $f$ and $g$, the \emph{plethysm} $f[g]$ of $f$ with $g$ is a symmetric function obtained from $f$ by substituting the monomials of $g$ into the variables of $f$. 

Since $e_k[h_r]$ and $e_k[e_r]$ are again symmetric functions, they can be expressed as polynomials in the elementary symmetric functions $e_i$ by the fundamental theorem of symmetric functions~\cite[Theorem~7.4.4]{EC2}. Since $e_i(\gamma_1,\dots,\gamma_d)=\alpha_i$ is the coefficient of $t^{d-i}$ in $Q(t)$, we get that the coefficients of $Q^\pleh{r}(t)$ and $Q^\plee{r}(t)$ are polynomial expressions in the coefficients of $Q(t)$ and thus $Q^\pleh{r}(t),Q^\plee{r}(t)\in K[t]$ rather than $\Kbar[t]$. 

\begin{example}
	Let $Q(t)=t^3-\alpha_1t^2+\alpha_2t-\alpha_3$. Then 
	\begin{equation}\label{eq:pleth_ex}Q^\plee
{2}(t)=t^3-\alpha_2t^2+\alpha_3\alpha_1t-\alpha_3^3.
	\end{equation}
	For example, we get $\alpha_3\alpha_1t$ because we have the identity $e_2[e_2]=e_3e_1-e_4$ for symmetric functions and since $e_4(\gamma_1,\gamma_2,\gamma_3,0,0,\dots)=0$, we have 
	\[e_2[e_2](\gamma_1,\gamma_2,\gamma_3)=(e_3e_1)(\gamma_1,\gamma_2,\gamma_3).\]
	Indeed,
	\begin{eqnarray*}
	e_2[e_2](\gamma_1,\gamma_2,\gamma_3)&=&e_2(\gamma_1\gamma_2,\gamma_1\gamma_3,\gamma_2\gamma_3)=
	\gamma_1\gamma_2\gamma_1\gamma_3+\gamma_1\gamma_2\gamma_2\gamma_3+\gamma_1\gamma_3\gamma_2\gamma_3\\
	&=&\gamma_1\gamma_2\gamma_3(\gamma_1+\gamma_2+\gamma_3)=e_3(\gamma_1,\gamma_2,\gamma_3)e_1(\gamma_1,\gamma_2,\gamma_3).
	\end{eqnarray*}
	Similarly, 
	
	\begin{eqnarray*}Q^\pleh{2}(t)&=&t^6-(\alpha_1^2-\alpha_2)t^5+(\alpha_2\alpha_1^2-\alpha_2^2-\alpha_3\alpha_1)t^4\\	
	&-&(\alpha_2^2+\alpha_3\alpha_1^3-4\alpha_3\alpha_2\alpha_1+2\alpha_3^2)t^3 +(\alpha_3\alpha_2^2\alpha_1-\alpha_3^2\alpha1^2-\alpha_3^2\alpha_2)t^2\\
	&-&(\alpha_3^2\alpha_2^2-\alpha_3^3\alpha_1)t+\alpha_3^4.
	\end{eqnarray*}
\end{example}

\def\multiset#1#2{\ensuremath{\left(\kern-.3em\left(\genfrac{}{}{0pt}{}{#1}{#2}\right)\kern-.3em\right)}}

One easily observes that the degree of $Q^\pleh{r}(t)$ is $\multiset{d}{r}={d+r-1\choose r}$ while the degree of $Q^\plee{r}(t)$ is $d\choose r$. For example, $Q^\pleh{d}(t)$ has degree ${2d-1\choose d}$ but $Q^\plee{d}(t)=t-\alpha_d$ has degree $1$. It is obvious from the definitions that $Q^\pleh{r}(t)$ is always divisible by $Q^\plee{r}(t)$ in $\Kbar[t]$, and by an argument similar to the one above, their ratio even belongs to $K[t]$.

The following proposition follows from~\cite[Propositions~4.2.2 and~4.2.5]{EC1}.
\begin{proposition}\label{prop:product}
	Suppose that the sequences $f_1,f_2,\dots,f_r:\N\to K$ each satisfy a linear recurrence with characteristic polynomial $Q(t)$. Then the sequence $f=(f_1f_2\cdots f_r):\N\to K$ defined by 
	\[f(\ell)=f_1(\ell)f_2(\ell)\cdots f_r(\ell)\]
	satisfies a linear recurrence with characteristic polynomial $Q^\pleh{r}(t)$.
\end{proposition}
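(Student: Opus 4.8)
The plan is to pass from the hypothesis to the explicit exponential-polynomial form of each $f_i$, multiply the forms together, and read off the characteristic roots of the product. By the structure theory of linear recurrences (the ``roots'' version of Proposition~\ref{prop:rational}, i.e.\ the representation half of~\cite[Proposition~4.2.2]{EC1}), the assumption that $f_i$ satisfies a recurrence with characteristic polynomial $Q(t)$ means that, for all but finitely many $\ell$,
\[
f_i(\ell)=\sum_{a=1}^s p_{i,a}(\ell)\,\beta_a^\ell,
\]
where $\beta_1,\dots,\beta_s\in\Kbar$ are the distinct roots of $Q(t)$ with multiplicities $m_1,\dots,m_s$ and each $p_{i,a}(t)\in\Kbar[t]$ has degree at most $m_a-1$. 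First I would multiply these $r$ representations and expand:
\[
f(\ell)=\prod_{i=1}^r f_i(\ell)=\sum_{(a_1,\dots,a_r)\in[s]^r}\Bigl(\prod_{i=1}^r p_{i,a_i}(\ell)\Bigr)\bigl(\beta_{a_1}\cdots\beta_{a_r}\bigr)^\ell.
\]
Collecting terms according to the value $\delta=\beta_{a_1}\cdots\beta_{a_r}$ yields a representation $f(\ell)=\sum_\delta q_\delta(\ell)\,\delta^\ell$ in which $\delta$ ranges over products of $r$ roots of $Q$ \emph{with repetitions allowed}; this is exactly why the complete-homogeneous index set $1\le i_1\le\cdots\le i_r\le d$ defining $Q^\pleh{r}(t)$ (rather than the strict one defining $Q^\plee{r}(t)$) is the relevant one. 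In particular every such $\delta$ is a root of $Q^\pleh{r}(t)$.

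By the converse half of~\cite[Proposition~4.2.2]{EC1} it then suffices to check that each $\delta$ occurs as a root of $Q^\pleh{r}(t)$ with multiplicity at least $\deg q_\delta+1$. Equivalently, since satisfying a recurrence with a given characteristic polynomial automatically implies satisfying the recurrence with any multiple of it (apply the extra factor of the shift operator), it is enough to bound $\deg q_\delta$ by the multiplicity of $\delta$ in $Q^\pleh{r}(t)$, after which Proposition~\ref{prop:rational} gives the claim for all but finitely many $\ell$.

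The main obstacle is precisely this multiplicity count, and it is where the combinatorics of $Q^\pleh{r}$ enters. Fix $\delta$ and assume first that it has a unique expression $\delta=\prod_a\beta_a^{k_a}$ with $\sum_a k_a=r$. On one hand, $q_\delta(\ell)$ is a sum of products $\prod_i p_{i,a_i}(\ell)$ over tuples of this type, so $\deg q_\delta\le\sum_a k_a(m_a-1)$. On the other hand, the multiplicity of $\delta$ as a root of $Q^\pleh{r}(t)$ equals the number of nondecreasing index tuples drawn from the root multiset whose product is $\delta$, namely $\prod_a\multiset{m_a}{k_a}=\prod_a\binom{m_a+k_a-1}{k_a}$. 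Thus the whole matter reduces to the elementary inequality
\[
1+\sum_a k_a(m_a-1)\le\prod_a\binom{m_a+k_a-1}{k_a},
\]
which in turn follows from the per-factor bound $\binom{m_a+k_a-1}{k_a}\ge 1+k_a(m_a-1)$ (a quick induction on $m_a$) together with the superadditivity $\prod_a(1+x_a)\ge 1+\sum_a x_a$ for $x_a\ge 0$. This is already tight when $s=1$, reflecting that $Q^\pleh{r}$ has exactly degree $\multiset{d}{r}$. Finally, accidental coincidences in which several multisets $(k_a)$ produce the same $\delta$ only enlarge the right-hand multiplicity, whereas $\deg q_\delta$ is the maximum of the per-type degree bounds, so the inequality survives. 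I would also remark that the same argument can be packaged more compactly by iterating the Hadamard-product statement~\cite[Proposition~4.2.5]{EC1}, at the cost of tracking exactly these multiplicities.
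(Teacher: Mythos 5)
Your proof is correct and takes essentially the same route as the paper: the paper's entire proof is a citation of \cite[Propositions~4.2.2 and~4.2.5]{EC1} (the exponential-polynomial characterization of sequences satisfying a linear recurrence and the Hadamard-product closure), which is exactly the argument you spell out, including the multiplicity bookkeeping those propositions encapsulate. Your key inequality $1+\sum_a k_a(m_a-1)\le\prod_a\binom{m_a+k_a-1}{k_a}$ and the handling of coincident products $\delta$ are both valid, so there is no gap.
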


\newcommand{\Wedge}[2]{\Lambda^{#1}(#2)}
\newcommand{\Wedger}[1]{\Wedge{r}{#1}}

Let now $A$ be an $n\times n$ matrix over $K$. We view $A$ as a linear map $A:W\to W$ where $W=K^n$. Let $w_1,\dots,w_n$ be the basis of $W$. The \emph{$r$-th exterior power} of $W$ is the linear space $\Wedger W$ with basis 
\[\left\{w_{i_1}\wedge w_{i_2}\wedge\dots\wedge w_{i_r}\mid 1\leq i_1<i_2<\dots<i_r\leq n\right\}.\]
The \emph{$r$-th exterior power} of $A$ is the linear map $\Wedger  A:\Wedger W\to\Wedger W$ defined on every basis element by
\begin{equation}\label{eq:wedge_dfn}
\Wedger A(w_{i_1}\wedge w_{i_2}\wedge\dots\wedge w_{i_r})=(Aw_{i_1})\wedge(Aw_{i_2})\wedge\dots\wedge(Aw_{i_r}).
\end{equation}
Here we use the multilinearity and antisymmetry of $\wedge$ to expand the right hand side of~\eqref{eq:wedge_dfn} in the basis of $\Wedger W$. 

Equivalently, $\Wedger A$ is an ${n\choose r}\times{n\choose r}$ matrix whose rows and columns are indexed by $r$-element subsets of $[n]:=\{1,2,\dots,n\}$ and for two such subsets $I,J$, the corresponding entry of $\Wedger A$ equals the value of the minor of $A$ with rows $I$ and columns $J$.

The following fact is an application of the general theory of \emph{$\l$-rings}, see e.g.~\cite[Section~4]{Soule}:
\begin{proposition}\label{prop:wedge}
	Let $Q(t)\in K[t]$ be the characteristic polynomial of $A$. Then the characteristic polynomial of $\Wedger A$ is $Q^\plee{r}(t)$.
\end{proposition}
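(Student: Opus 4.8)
The plan is to reduce to the case of an upper-triangular matrix by passing to the algebraic closure, where the characteristic polynomial splits. Since the characteristic polynomial of a matrix is unchanged under extension of the base field, it suffices to compute the characteristic polynomial of $\Wedger A$ over $\Kbar$. In the setting of the proposition $A$ is $n\times n$ and $Q(t)$ is its characteristic polynomial, so $d=n$ and over $\Kbar$ we have $Q(t)=\prod_{j=1}^d(t-\gamma_j)$. As this polynomial splits, $A$ is similar over $\Kbar$ to an upper-triangular matrix $T$ whose diagonal entries are exactly $\gamma_1,\dots,\gamma_n$; write $A=UTU^{-1}$ with $U\in GL_n(\Kbar)$.

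The first key step is to transport this similarity through the exterior-power construction. Because the entries of $\Wedger A$ are the $r\times r$ minors of $A$, the Cauchy--Binet formula gives $\Wedger{(BC)}=(\Wedger B)(\Wedger C)$ for any two $n\times n$ matrices $B,C$; equivalently, $\Lambda^r$ is functorial. Applying this to $A=UTU^{-1}$ yields $\Wedger A=(\Wedger U)(\Wedger T)(\Wedger U)^{-1}$, so $\Wedger A$ and $\Wedger T$ are similar over $\Kbar$ and therefore have the same characteristic polynomial.

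The second, and main, step is to read off the characteristic polynomial of $\Wedger T$ directly. Index the basis of $\Wedger W$ by $r$-subsets $S=\{i_1<\dots<i_r\}\subseteq[n]$, and order these subsets by any linear refinement of componentwise domination. Since $Tw_j$ is a combination of the $w_i$ with $i\le j$, expanding $\Wedger T(w_{i_1}\wedge\dots\wedge w_{i_r})=(Tw_{i_1})\wedge\dots\wedge(Tw_{i_r})$ and using antisymmetry of $\wedge$ shows that every basis vector occurring weakly precedes $S$ in this order, so $\Wedger T$ is upper-triangular in this basis. A short argument---the largest index $i_r$ can only be produced by selecting the diagonal term from $Tw_{i_r}$, after which one inducts downward---shows that the coefficient of $w_{i_1}\wedge\dots\wedge w_{i_r}$ equals $t_{i_1i_1}\cdots t_{i_ri_r}=\gamma_{i_1}\cdots\gamma_{i_r}$. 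Hence the diagonal entries of $\Wedger T$ are precisely the products $\gamma_{i_1}\cdots\gamma_{i_r}$ over $1\le i_1<\dots<i_r\le d$, and the characteristic polynomial of $\Wedger A$ equals $\prod_{1\le i_1<\dots<i_r\le d}(t-\gamma_{i_1}\cdots\gamma_{i_r})=Q^\plee{r}(t)$. Although this computation takes place over $\Kbar$, we already know from Section~\ref{sect:plethysm} that $Q^\plee{r}(t)\in K[t]$, which is consistent with $\Wedger A$ being defined over $K$.

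The main obstacle is the triangular-form bookkeeping in the last step: verifying both that $\Wedger T$ is genuinely upper-triangular---that is, that termwise domination $j_k\le i_k$ descends to the sorted index sets---and that the diagonal contribution is unique. Everything else in the argument (invariance of the characteristic polynomial under field extension, splitting over $\Kbar$, and Cauchy--Binet functoriality) is formal.
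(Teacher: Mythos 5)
Your proof is correct, and it takes a genuinely different route from the paper: the paper does not prove Proposition~\ref{prop:wedge} at all, but simply cites the general theory of $\lambda$-rings~\cite[Section~4]{Soule}. Your argument is a self-contained linear-algebra proof: extend scalars to $\Kbar$ (harmless, since the characteristic polynomial is unchanged by field extension), triangularize $A=UTU^{-1}$ with diagonal $\gamma_1,\dots,\gamma_n$, use multiplicativity of the exterior power---which the paper itself records as~\eqref{eq:wedge_mult}, so Cauchy--Binet need not even be invoked separately---to conclude that $\Wedger A$ is similar to $\Wedger T$, and then read off the diagonal of $\Wedger T$. The two bookkeeping points you flag both go through: if $j_1,\dots,j_r$ are distinct with $j_k\leq i_k$ for all $k$, then the $k$-th smallest of the $j$'s is at most $i_k$ (because $j_1,\dots,j_k$ already give $k$ distinct values $\leq i_k$), so termwise domination survives sorting and $\Wedger T$ is triangular in any linear extension of the domination order; and the only selection producing the index set $\{i_1,\dots,i_r\}$ itself is $j_k=i_k$ for every $k$, by downward induction from $i_r$, so the diagonal entry is $\gamma_{i_1}\cdots\gamma_{i_r}$ with sign $+1$. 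As for what each approach buys: the paper's citation is shorter and places the statement in a broader framework (exterior powers as $\lambda$-operations, with the eigenvalue products governed by $e_k[e_r]$), which matches how the plethysm bookkeeping in Section~\ref{sect:plethysm} is organized; your proof is elementary and self-contained, uses only facts already stated in the paper plus Schur triangularization, and would make the paper independent of the $\lambda$-ring reference.
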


\begin{example}
	Let 
	\[A=\begin{pmatrix}
	    	a&0&d\\
	    	0&b&e\\
	    	0&f&c
	    \end{pmatrix}.\]
	Then the characteristic polynomial of $A$ is 
	\[Q(t)=\det(t\Id-A)=t^3-(a+b+c)t^2+(ab+bc+ac-ef)t-(abc-aef).\]
	Order the two-element subsets of $\{1,2,3\}$ as $(\{1,2\},\{1,3\},\{2,3\})$. Then for this ordering we can write
	\[\Wedge{2}{A}=\begin{pmatrix}
	            	ab&ae&-bd\\
	            	af&ac&-df\\
	            	0&0&bc-ef
	            \end{pmatrix}.\]
	 Thus the characteristic polynomial of $\Wedge{2}{A}$ is
	 \[\left(t-(bc-ef)\right)\left((t-ab)(t-ac)-a^2ef\right).\]
	 We encourage the reader to check that this polynomial equals to $Q^\plee{2}(t)$, i.e. the polynomial given by~\eqref{eq:pleth_ex} for 
	 \[\alpha_1=a+b+c;\quad \alpha_2=ab+bc+ac-ef;\quad \alpha_3=abc-aef.\]
\end{example}

Another well-known property of the exterior power is its multiplicativity: for two $n\times n$ matrices $A,B$, we have
\begin{equation}\label{eq:wedge_mult}
\Wedger{AB}=(\Wedger A)(\Wedger B).
\end{equation}
This is an obvious consequence of~\eqref{eq:wedge_dfn}.

\subsection{Lindstr\"om-Gessel-Viennot theorem}
We give a short background on the Lindstr\"om-Gessel-Viennot method introduced in~\cite{GV,Lindstrom} adapted to the case of cylindrical networks. 

\def\LGV{A}
Let $\Nt$ be a cylindrical network and consider two $r$-vertices $\ubft=(\ut_1,\ut_2,\dots,\ut_r)$ and $\vbft=(\vt_1,\vt_2,\dots,\vt_r)$ in $\Nt$. Let $\LGV(\ubft,\vbft)=(a_{ij})$ be the $r\times r$ matrix whose entries are given by
\[a_{ij}=\ee(\ut_i,\vt_j).\]

The Lindstr\"om-Gessel-Viennot theorem gives a combinatorial interpretation to the determinant of $\LGV(\ubft,\vbft)$. 

\begin{theorem}[\cite{GV}]\label{thm:LGV}
	We have
	\begin{equation}\label{eq:LGV}
	\det \LGV(\ubft,\vbft)=\sum_{\sigma\in\Sfr_r} \sgn(\sigma)\ee(\ubft,\sigma\vbft).
	\end{equation}
	
\end{theorem}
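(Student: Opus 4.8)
The plan is to prove Theorem~\ref{thm:LGV} by the classical Lindstr\"om--Gessel--Viennot sign-reversing involution, checking only that it transfers to the infinite but locally finite acyclic graph $\Nt$. First I would expand the determinant directly from the definition $a_{ij}=\ee(\ut_i,\vt_j)$:
\[
\det\LGV(\ubft,\vbft)=\sum_{\sigma\in\Sfr_r}\sgn(\sigma)\prod_{i=1}^r a_{i,\sigma(i)}=\sum_{\sigma\in\Sfr_r}\sgn(\sigma)\prod_{i=1}^r \ee(\ut_i,\vt_{\sigma(i)}).
\]
Since each factor $\ee(\ut_i,\vt_{\sigma(i)})$ is the weighted sum over all directed paths from $\ut_i$ to $\vt_{\sigma(i)}$, expanding the product rewrites the right-hand side as a signed sum over \emph{all} $r$-tuples $(\Pt_1,\dots,\Pt_r)$ of paths in $\Nt$ — not required to be vertex disjoint — with $\Pt_i$ running from $\ut_i$ to $\vt_{\sigma(i)}$, the sign being $\sgn(\sigma)$ of the permutation recording the endpoints. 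By Section~\ref{sect:cyl_networks} each set $\pathst(\ut_i,\vt_j)$ is finite, so this is a finite sum and the rearrangement is legitimate.

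Next I would define a sign-reversing involution on the set of those tuples in which at least two paths share a vertex. Fix a total order on the countable set $\Vt$. Given such a tuple, let $i$ be the least index with $\Pt_i$ meeting another path, let $w$ be the first vertex along $\Pt_i$ that lies on some $\Pt_k$ with $k\neq i$, and let $j$ be the smallest index of a path through $w$ other than $i$ (necessarily $j>i$, since every path of index below $i$ meets nothing). Swapping the portions of $\Pt_i$ and $\Pt_j$ after $w$ leaves the starting tuple $\ubft$ fixed, preserves $\wt(\Pt_1)\cdots\wt(\Pt_r)$, and interchanges the endpoints $\vt_{\sigma(i)}$ and $\vt_{\sigma(j)}$, thereby composing $\sigma$ with a transposition and flipping $\sgn(\sigma)$. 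As the non-disjoint tuples cancel in pairs, the signed sum collapses to the contribution of the vertex-disjoint tuples alone, which is precisely $\sum_{\sigma\in\Sfr_r}\sgn(\sigma)\,\ee(\ubft,\sigma\vbft)$.

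The main obstacle is verifying that this tail swap is a genuine involution inside $\Nt$, and here I would lean on condition~(5) of Definition~\ref{dfn:network}. Acyclicity of $\Nt$ forces every directed walk to be vertex-simple, so each swapped concatenation is again an honest directed path and $w$, as ``first vertex along $\Pt_i$'', is well defined. Moreover, since the swap only redistributes the shared vertex set of $\Pt_i$ and $\Pt_j$, the untouched paths of index below $i$ still meet nothing, $i$ remains the least intersecting index, $w$ remains the first intersection along the new $\Pt_i$, and the set of indices through $w$ is unchanged; hence the distinguished data $(i,w,j)$ is reproduced and applying the map twice restores the original tuple. Apart from this check, the argument is identical to the proof in~\cite{GV}, and no genuinely cylindrical feature intervenes because the entire construction takes place in the lift $\Nt$ with finite path sets.
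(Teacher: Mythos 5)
Your proof is correct and follows exactly the path-cancelling (tail-swapping) involution that the paper itself invokes: its ``proof'' of Theorem~\ref{thm:LGV} is just a citation of the argument in~\cite{GV}. You supply the details the paper omits, and your two extra checks --- that each $\pathst(\ut_i,\vt_j)$ is finite so the expansion is legitimate, and that acyclicity of $\Nt$ makes every swapped concatenation a genuine simple path --- are precisely what is needed to transfer the classical argument to the cylindrical setting.
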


This theorem can be proven using a simple path-cancelling argument. We refer the reader to~\cite{GV} for the details.

\begin{example}
	For the network $\Nt$ in Figure~\ref{fig:network_example}, consider $\ubft=(\ut,\vt-\g)$ and $\vbft=(\ut+\g,\vt)$. Then we have
	\[\LGV(\ubft,\vbft)=\begin{pmatrix}
	       	a+cd& b+ce+ac+c^2d\\
	       	d & e+cd
	       \end{pmatrix}.\]
	 Therefore
	 \[\det\LGV(\ubft,\vbft)=(a+cd)(e+cd)-d(b+ce+ac+c^2d)=ae-bd.\]
	This agrees with~\eqref{eq:LGV} since there is just one $2$-path in $\pathst(\ubft,\vbft)$ with weight $ae$ and one $2$-path in $\pathst(\ubft,\vbft')$ with weight $bd$, where $\vbft'=(\vt,\ut+\g)=\sigma \vbft$ for $\sigma=(12)$ being the unique transposition in $\Sfr_2$.

\end{example}

\subsection{A recurrence for tuples of paths}
\begin{proof}[Proof of Theorem~\ref{thm:recurrences}, part~\eqref{item:thm:pleh}]
	Let $a_{ij}^\parr \ell$ be defined so that $\LGV(\ubft,\vbft_\ell)=(a_{ij}^\parr \ell)_{i,j=1}^r$. Then by Theorem~\ref{thm:single_path}, for all $1\leq i,j\leq r$, the sequence $(a_{ij}^\parr \ell)_{\ell=0}^\infty$ satisfies a linear recurrence with characteristic polynomial $Q_N(t)$. Since by Theorem~\ref{thm:LGV}, $f(\ell)$ is a linear combination of $r$-term products of $a_{ij}^\parr \ell$'s, the result follows by Propositions~\ref{prop:product} and~\ref{prop:sum}.
\end{proof}

Recall that the polynomial $Q_N^\plee{r}(t)$ in general divides the polynomial $Q_N^\pleh{r}(t)$ and has a much smaller degree for large $r$. We now would like to prove the second part of Theorem~\ref{thm:recurrences}.

\begin{definition}
	We say that a cylindrical network $\Nt$ is \emph{local} if one can choose a lifting $\Lift$ of $V$ to $\Nt$ so that the entries of the matrix $B(t)$ given by~\eqref{eq:Bt} would be \emph{linear polynomials} in $t$, i.e. $B(t)=C+tD$ for some matrices $C,D$ whose entries do not depend on $t$. In other words, $\Nt$ is \emph{local} if there exists a lifting $\Lift$ of $V$ such that every edge that starts at $\Lift$ ends either at $\Lift$ or at $\Lift+\g$. In this case, we say that $\Lift$ is a \emph{local lifting} for $\Nt$.
\end{definition}

We will later see in Proposition~\ref{prop:local} that every planar cylindrical network $\Nt$ is local. However, this property is more general:
\begin{example}
	The cylindrical network $\Nt$ in Figure~\ref{fig:network_example} is local. Indeed, if we choose a different lift $\Lift=\{\ut+\g,\vt\}$ then the matrix $B(t)$ becomes
	\[B(t)=\begin{pmatrix}
	       	at & c+bt\\
	       	dt & et
	       \end{pmatrix}.\]
Thus we have 
\[B(t)=C+tD,\quad\text{where}\quad C=\begin{pmatrix}
	       	0 & c\\
	       	0 &0
	       \end{pmatrix},\quad D=\begin{pmatrix}
	       	a & b\\
	       	d & e
	       \end{pmatrix}.\]
\end{example}

\begin{theorem}\label{thm:tuples_plee}
	Suppose that $\Nt$ is a local network and let $\Lift$ be a local lifting for $\Nt$. Let $\ubft$ and $\vbft$ be two $r$-vertices in $\Nt$. Then the sequence $f(\ell)$ from Theorem~\ref{thm:recurrences} satisfies a linear recurrence with characteristic polynomial $Q_N^\plee{r}(t)$.
\end{theorem}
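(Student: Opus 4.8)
The plan is to upgrade the argument for part~\eqref{item:thm:pleh} by exploiting the full determinantal structure of $f(\ell)$ — rather than merely treating it as a ``sum of products of entries'' — which is exactly what the exterior-power machinery of Section~\ref{sect:plethysm} is built for; locality is precisely what makes this possible. First I would fix a local lifting $\Lift=\{\liftt_1,\dots,\liftt_p\}$, so that the matrix of~\eqref{eq:Bt} is $B(t)=C+tD$ with $C,D$ independent of $t$, and set $S=(\Id-C)^{-1}$ and $T=SD=(\Id-C)^{-1}D$. Here $\Id-C$ is invertible because $\det(\Id-C)=Q_\Lift(0)=(-1)^d\neq 0$ by~\eqref{eq:Q_Lift}. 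Extracting the coefficient of $t^\ell$ from the identity $(\Id-C-tD)A(t)=\Id$ coming from~\eqref{eq:Q_A} gives $A^\parr0=S$ and $A^\parr\ell=T A^{(\ell-1)}$, hence $A^\parr\ell=T^\ell S$ for all $\ell\geq 0$, where $A^\parr\ell$ denotes the matrix of coefficients of $t^\ell$ in $A(t)$. In particular $\ee(\liftt_i,\liftt_j+\ell\g)=(T^\ell S)_{ij}$.

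Next, by Theorem~\ref{thm:LGV} we have $f(\ell)=\det(\ee(\ut_i,\vt_j+\ell\g))_{i,j=1}^r$. Writing $\ut_i=\liftt_{p_i}+a_i\g$ and $\vt_j=\liftt_{q_j}+b_j\g$ and using shift invariance, the $(i,j)$ entry equals $(T^{\ell+b_j-a_i}S)_{p_iq_j}$ for all $\ell$ large enough that every exponent is nonnegative. I would absorb the row and column shifts into fixed vectors: with $\alpha=\max_i a_i$ and $\beta=\min_j b_j$, set $\phi_i^\top=\ebf_{p_i}^\top T^{\alpha-a_i}$ and $\psi_j=T^{b_j-\beta}S\,\ebf_{q_j}$, so that the $(i,j)$ entry becomes $\phi_i^\top T^{\ell'}\psi_j$ with $\ell'=\ell-\alpha+\beta\geq 0$. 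Collecting the $\phi_i,\psi_j$ into $p\times r$ matrices $\Phi,\Psi$ gives $f(\ell)=\det(\Phi^\top T^{\ell'}\Psi)$, and the Cauchy--Binet formula together with the multiplicativity~\eqref{eq:wedge_mult} of exterior powers (extended to rectangular matrices) yields
\[f(\ell)=(\Wedger{\Phi^\top})\,(\Wedger T)^{\ell'}\,(\Wedger\Psi),\]
a scalar of the form $\vbf^\top(\Wedger T)^{\ell'}\wbf$ for fixed vectors $\vbf,\wbf$ of length $\binom{p}{r}$.

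By the Cayley--Hamilton theorem any such sequence satisfies the linear recurrence whose characteristic polynomial is $\chi_{\Wedger T}(t)$, the characteristic polynomial of $\Wedger T$. To identify it I would first compute $\chi_T(t)$: factoring $\det(\Id-C-tD)=\det(\Id-C)\det(\Id-tT)$ and comparing with $Q_\Lift(t)=(-t)^dQ_N(1/t)$ from~\eqref{eq:Q_Lift} shows that the eigenvalues of $T$ are the $d$ roots $\gamma_1,\dots,\gamma_d$ of $Q_N(t)$ together with $p-d$ zeros, i.e.\ $\chi_T(t)=t^{p-d}Q_N(t)$. Proposition~\ref{prop:wedge} then gives $\chi_{\Wedger T}(t)=t^{\binom{p}{r}-\binom{d}{r}}\,Q_N^\plee{r}(t)$: every $r$-subset of the eigenvalues containing a zero contributes a factor of $t$, while the $r$-subsets drawn from $\{\gamma_1,\dots,\gamma_d\}$ reassemble exactly $Q_N^\plee{r}(t)$.

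Finally, the spurious monomial factor $t^{\binom{p}{r}-\binom{d}{r}}$ is harmless: multiplying a polynomial by a power of $t$ does not change its palindromic image, so $\chi_{\Wedger T}(t)$ and $Q_N^\plee{r}(t)$ have the same palindromic image, and by Proposition~\ref{prop:rational} the sequence $f$ satisfies the recurrence with characteristic polynomial $Q_N^\plee{r}(t)$. Since $f(\ell)=\vbf^\top(\Wedger T)^{\ell'}\wbf$ only once $\ell$ is large, this holds for all but finitely many $\ell$, as claimed. I expect the main obstacle to be the bookkeeping of the two steps that can introduce extraneous factors — the zero eigenvalues of $T$ (producing the monomial $t^{\binom{p}{r}-\binom{d}{r}}$) and the row/column shifts $a_i,b_j$ — and recognizing that neither affects the recurrence once one passes through Proposition~\ref{prop:rational}; the Cauchy--Binet identification itself is routine given the tools already developed.
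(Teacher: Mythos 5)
Your proof is correct, and its core coincides with the paper's: fix a local lifting so that $B(t)=C+tD$, factor $\Id-C-tD=(\Id-C)(\Id-tT)$ with $T=(\Id-C)^{-1}D$ (the paper calls this matrix $S$), deduce $A^{(\ell)}=T^{\ell}(\Id-C)^{-1}$, identify the nonzero spectrum of $T$ with the roots of $Q_N(t)$ via \eqref{eq:Q_Lift}, and pass to $r$-th exterior powers, using multiplicativity and Proposition~\ref{prop:wedge} to land on $Q_N^\plee{r}(t)$. Where you genuinely differ is in handling $r$-vertices whose entries do not lie in $\Lift$: the paper first proves the case where all vertices of $\ubft$ and $\vbft$ belong to $\Lift$ (there $f(\ell)$ is an entry of $\Lambda^r(A^{(\ell)})$ by Theorem~\ref{thm:LGV}), and then reduces the general case combinatorially, cutting every $r$-path into a prefix, a middle $r$-path running between shifts of $\Lift$, and a suffix, and recombining the finitely many resulting sequences by Proposition~\ref{prop:sum}; you instead absorb the offsets $a_i,b_j$ into fixed vectors via shift-invariance, write $f(\ell)=\det(\Phi^\top T^{\ell'}\Psi)$, and apply Cauchy--Binet, which keeps the whole argument linear-algebraic and avoids any case distinction --- at the cost of invoking Cauchy--Binet for rectangular matrices, a standard fact but one step beyond the square-matrix identity \eqref{eq:wedge_mult} the paper records. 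Your endgame via Cayley--Hamilton, including the observation that the factor $t^{\binom{p}{r}-\binom{d}{r}}$ contributed by the zero eigenvalues of $T$ leaves the palindromic image (hence the recurrence) unchanged, is also sound, and is in fact more explicit than the paper's treatment, which compresses the same bookkeeping into the phrase ``possibly multiplied by a power of $t$''. One small correction: $\det(\Id-C)$ equals $1$, not $(-1)^d$; the clean justification, used in the paper, is that $C$ is nilpotent because $\Nt$ is acyclic. This does not affect your argument, since you only use $\det(\Id-C)\neq 0$.
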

\begin{proof}
	Let $B(t)=C+tD$. Then since $\Nt$ is acyclic, the matrix $C$ has to be nilpotent, and therefore $(\Id-C)^{-1}=\Id+C+\dots+C^l$ for some $l\geq 0$.
	We are interested in the matrix $A(t)=(\Id-B(t))^{-1}=(\Id-C-tD)^{-1}$. Let 
	\begin{equation}\label{eq:S}
	S=(\Id-C)^{-1} D,
      \end{equation}
      then we have
	\[(\Id-C-tD)=(\Id-C)(\Id-t(\Id-C)^{-1}D)=(\Id-C)(\Id-tS),\]
	and thus 
	\begin{equation}\label{eq:AtS}
	A(t)=(\Id-tS)^{-1} (\Id-C)^{-1}.
	\end{equation}
	In particular, 
	\begin{equation}\label{eq:det_S}
	Q_\Lift(t)=\det(\Id-B(t))=\det(\Id-tS),
	\end{equation} 
	since $\det(\Id-C)=1$ for any nilpotent matrix $C$. Thus $Q_N(t)=t^dQ_\Lift(1/t)$ is the characteristic polynomial $\det(t\Id-S)$ of $S$, possibly multiplied by a power of $t$.
	
	Let us denote
	\[A(t)=\sum_{\ell\geq \ell_0} A^\parr \ell t^\ell.\]
	Thus the entry $a_{ij}^\parr \ell$ of $A^\parr \ell$ counts the paths from $\liftt_i$ to $\liftt_j+\ell\g$ in $\Nt$. Using~\eqref{eq:AtS}, we get  
	\[A^\parr \ell= S^\ell (\Id-C)^{-1}.\]
	
	We first consider the case when all the vertices of $\ubft$ and of $\vbft$ belong to $\Lift$. Consider the sequence $f:\N\to K$ from Theorem~\ref{thm:recurrences}. By Theorem~\ref{thm:LGV}, $f(\ell)$ is a certain $r\times r$ minor of the matrix $A^\parr \ell$, or equivalently, it is a certain entry of the matrix $\Wedger{A^\parr\ell}$. Using the multiplicativity~\eqref{eq:wedge_mult} of the exterior power, we get
	\[\Wedger{A^\parr\ell}=(\Wedger S)^\ell (\Wedger{(\Id-C)^{-1}}).\]
	\def\wedger{\parr{\wedge^r }}
	Define the matrix 
	\[A^\wedger(t)=\sum_{\ell\geq \ell_0} t^\ell(\Wedger{A^\parr\ell})=(\Id-t(\Wedger S))^{-1}(\Wedger{(\Id-C)^{-1}}).\]
	The generating function $\sum_\ell t^\ell f(\ell)$ for $f$ appears as an entry in $A^\wedger(t)$ and therefore is a rational function with denominator $\det(\Id-t(\Wedger S))$ which is just the palindromic image of the characteristic polynomial of $\Wedger S$. By Proposition~\ref{prop:wedge}, this characteristic polynomial equals $Q_N^\plee{r}(t)$ since $Q_N(t)$ is the characteristic polynomial of $S$ by~\eqref{eq:det_S}. 
	
	We are done with the case when all the vertices of $\ubft$ and $\vbft$ belong to $\Lift$. We are going to deduce the general case as a simple consequence. Consider any $r$-path $\Pbft=(\Pt_1,\dots,\Pt_r)$ from $\ubft$ to $\sigma\vbft_\ell$. We claim that it can be decomposed as a concatenation of three $r$-paths $\Pbft^\parr1,\Pbft^\parr2,\Pbft^\parr3$ in such a way that the endpoints $\ubft'$ and $\vbft'$ of $\Pbft^\parr2$ satisfy the following: all vertices of $\ubft'$ belong to $\Lift+\ell_1\g$ and all vertices of $\vbft'$ belong to $\Lift+(\ell+\ell_2)\g$, for some constants $\ell_1$ and $\ell_2$. Indeed, consider a path $\Pt_i$ of $\Pbft$ from $\ut_i$ to $\vt_{\sigma(i)}+\ell\g$. Let $(\pt_1,\pt_2,\dots,\pt_M)$ be the vertices of $\Pt_i$, and define $z(\pt_i)$ to be the unique integer $z$ so that $\pt_i\in \Lift+z\g$. Then the sequence $(z(\pt_i))_{i=1}^M$ is weakly increasing and \emph{Lipschitz}, i.e. 
	\[z(\pt_i)\leq z(\pt_{i+1})\leq z(\pt_i)+1.\]
	Let 
	\[z_0=\min\{z(\ut_i)\mid 1\leq i\leq r\},\quad z_0'=\max\{z(\ut_i)\mid 1\leq i\leq r\};\] \[z_1=\min\{z(\vt_i)\mid 1\leq i\leq r\},\quad z_1'=\max\{z(\vt_i)\mid 1\leq i\leq r\}.\]
	Then we can define the decomposition of $\Pt_i$ into three parts as follows: 
	\begin{itemize}
		\item $\Pt_i^\parr1$ consists of vertices $\pt_i$ for which $z_0\leq z(\pt_i)< z_0'$;
		\item $\Pt_i^\parr2$ consists of vertices $\pt_i$ for which $z_0'\leq z(\pt_i)\leq z_1+\ell$;
		\item $\Pt_i^\parr3$ consists of vertices $\pt_i$ for which $z_1+\ell< z(\pt_i)\leq z_1'+\ell$.
	\end{itemize}
	Indeed, if we set $\ell_1=z_0'$ and $\ell_2=z_1$ then $\Pt_i^\parr2$ starts at a vertex from $\Lift+\ell_1\g$ and ends at a vertex from $\Lift+(\ell+\ell_2)\g$. It is clear that the total number of choices for $\Pbft^\parr1$ and $\Pbft^\parr3$ is finite, and for each such choice the corresponding sequence $f'$ for the points $\ubft'$ and $\vbft'$ satisfies a linear recurrence with characteristic polynomial $Q_N^\plee{r}$. Thus we are done by Proposition~\ref{prop:sum}.
\end{proof}

Even though we can prove Theorem~\ref{thm:tuples_plee} for only local cylindrical networks, we suspect that it holds for \emph{any} cylindrical network:
\begin{conjecture}
	If $\Nt$ is a cylindrical network and $\ubft,\vbft$ are two $r$-vertices then the sequence $f(\ell)$ from Theorem~\ref{thm:recurrences} satisfies a linear recurrence with characteristic polynomial $Q^\plee{r}_N(t)$ for all but finitely many $\ell$.
\end{conjecture}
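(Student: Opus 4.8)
The plan is to deduce the general case from the local case already settled in Theorem~\ref{thm:tuples_plee} by a \emph{subdivision} argument: I would replace an arbitrary cylindrical network $\Nt$ by a \emph{local} cylindrical network $\Nt'$ that computes the same sequence $f(\ell)$ for the same $r$-vertices $\ubft,\vbft$ and satisfies $Q_{N'}(t)=Q_N(t)$. Since the polynomial $Q^\plee{r}_N(t)$ depends only on $Q_N(t)$ through its roots $\gamma_1,\dots,\gamma_d$, once such a $\Nt'$ is produced the conjecture follows at once by applying Theorem~\ref{thm:tuples_plee} to $\Nt'$, giving a recurrence with characteristic polynomial $Q^\plee{r}_{N'}(t)=Q^\plee{r}_N(t)$.

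\textbf{Making the network local.} First I would fix an arbitrary lifting and record the per-edge winding contribution $\omega_0(e)\in\Z$ of each edge $e$ of $N$, i.e.\ the exponent $\ell$ appearing in~\eqref{eq:Bt}, normalized so that $\sum_{e\in C}\omega_0(e)=\wind(C)$ for every directed cycle $C$. The point is that locality can always be arranged after subdivision. I would first seek an integer potential $\phi\colon V\to\Z$ with
\[\omega_0(e)+\phi(\text{head}(e))-\phi(\text{tail}(e))\ge 0\quad\text{for every edge }e.\]
This is a system of difference constraints, and by the standard feasibility criterion it admits an integer solution precisely because every directed cycle $C$ of $N$ satisfies $\sum_{e\in C}\omega_0(e)=\wind(C)>0$; here the positive-winding hypothesis of Definition~\ref{dfn:network} is exactly what is needed. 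The potential $\phi$ yields a shift-invariant height function on the lift with all edge jumps nonnegative. Subdividing every edge whose jump is $\ge 2$ into unit-jump edges, inserting new vertices at the intermediate integer heights and splitting the weight as a product (e.g.\ $w\cdot 1\cdots 1$), produces a cylindrical network $\Nt'$ all of whose edges jump by $0$ or $1$; taking the height-$0$ level set as the lifting $\Lift'$ then exhibits $\Nt'$ as local.

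\textbf{Invariance and conclusion.} Next I would check that subdivision changes nothing essential. Each subdivision vertex has in- and out-degree one, so directed paths and simple cycles of $\Nt'$ through original vertices are in weight- and winding-preserving bijection with those of $\Nt$, and vertex-disjointness is preserved in both directions. This gives an isomorphism $\Cyc^r(N')\cong\Cyc^r(N)$ compatible with $\wt$ and $\wind$, hence $Q_{N'}(t)=Q_N(t)$; likewise $\ee(\ubft,\sigma\vbft_\ell)$ and thus $f(\ell)$ are unchanged. Applying Theorem~\ref{thm:tuples_plee} to $\Nt'$ then yields the recurrence with characteristic polynomial $Q^\plee{r}_N(t)$ for all but finitely many $\ell$. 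The enlarged transfer matrix of $\Nt'$ carries extra zero eigenvalues, but these contribute only a power of $t$ to its characteristic polynomial, exactly as in the proof of Theorem~\ref{thm:tuples_plee}, and so are harmless for large $\ell$.

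\textbf{Main obstacle.} The hard part will be making this reduction fully rigorous rather than morally convincing, which is presumably why it remains only a conjecture. The two places I expect genuine difficulty are: (i) the feasibility step, where one must produce an honest \emph{shift-invariant} integer height function and carry out the subdivision so that $\Nt'$ is a bona fide cylindrical network (discrete, locally finite, acyclic with positive winding); and (ii) the exact invariance $Q_{N'}(t)=Q_N(t)$, where one must carefully rule out that subdivision creates, destroys, or merges $r$-cycles in a way that perturbs the coefficients of $Q_N(t)$ or shifts its degree $d$. If a real obstruction survives my sketch, I expect it to hide in one of these two verifications.
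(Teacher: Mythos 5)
This statement is not proved in the paper at all: it is precisely the conjecture the authors pose right after Theorem~\ref{thm:tuples_plee}, and the paper's actual results cover only local networks (Theorem~\ref{thm:tuples_plee}) and, via Proposition~\ref{prop:local}, planar ones. So there is no proof of record to compare yours against; what you have written is a proposed solution of an open problem, and it must be judged on its own.

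With that understood, I checked your reduction and could not find a gap; both steps you flag as risky go through. Feasibility: the constraint digraph lives on the finite projection $N$, its directed cycles are exactly the reversed directed cycles of $N$ with weight $\sum_{e\in C}\omega_0(e)=\wind(C)\ge 1$, and since any negative closed walk would contain a negative simple cycle there are none; Bellman--Ford with a super-source then yields an integer potential $\phi$ on $V$, and replacing each lift $l(v)$ by $l(v)-\phi(v)\g$ makes all exponents in~\eqref{eq:Bt} nonnegative, with shift-invariance automatic because $\phi$ is defined on the quotient. Invariance: subdivision vertices have in- and out-degree one, and $\ubft,\vbft$ consist of original vertices, so every path and every simple cycle of the subdivided network traverses subdivided edges in full; hence paths, $r$-paths, and $r$-cycles correspond bijectively with weights, windings, and vertex-disjointness preserved, giving $Q_{N'}(t)=Q_N(t)$ on the nose (same degree $d$), the same sequence $f$, and $Q^{\plee{r}}_{N'}(t)=Q^{\plee{r}}_N(t)$, since by~\eqref{eq:plee_pleh_intro} this polynomial depends on the network only through the roots of $Q_N(t)$. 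The extra zero eigenvalues coming from the added vertices multiply characteristic polynomials by powers of $t$, which Proposition~\ref{prop:rational} is blind to (the palindromic image of $t^kQ(t)$ equals that of $Q(t)$), so they are indeed harmless; and Definition~\ref{dfn:network} imposes no planarity, so the new vertices can be placed along the edge segments keeping the vertex set discrete and shift-invariant. Conceptually your argument strictly generalizes Proposition~\ref{prop:local}: the paper extracts the height function from planarity by contraction and induction, whereas you extract a weaker height function (nonnegative jumps) from the positive-winding axiom alone and then purchase the Lipschitz property by subdivision. Written out carefully --- in particular the bijection statements above and the verification that the subdivided network satisfies all axioms of Definition~\ref{dfn:network} --- this would settle the paper's conjecture affirmatively, conditionally on Theorem~\ref{thm:tuples_plee}; it deserves a full write-up rather than a sketch.
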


\subsection{Planar cylindrical networks}\label{sect:planar}
Even though Theorem~\ref{thm:LGV} holds for all networks, in the majority of the situations it is applied to \emph{planar} networks. 

\begin{definition}\label{dfn:network_planar}
	We say that a cylindrical network $\Nt$ is \emph{planar} if for each edge $\et=(\ut,\vt)\in\Et$ there is an embedding $h_\et:[0,1]\to\TS$ satisfying the following properties: 
	\begin{itemize}
		\item $h_{\et}(0)=\ut$ and $h_{\et}(1)=\vt$;
		\item the interior $h_\et((0,1))$ of $\et$ does not intersect the image of any other $h_{\et'}$ for $\et'\in\Et$ and does not contain any vertex of $\Nt$;
		\item shift-invariance: $h_{\et+\g}(t)=h_\et(t)+\g$ for all $\et\in\Et$ and $t\in[0,1]$.
	\end{itemize}
\end{definition}

First, we prove a simpler formula~\eqref{eq:Q_N_planar_intro} for $Q_N(t)$ in the planar case:

\begin{proposition}\label{prop:Q_N_planar}
	If $\Nt$ is a planar cylindrical network then $Q_N(t)$ is given by~\eqref{eq:Q_N_planar_intro}, that is,
	\begin{equation*}
	Q_N(t)=\sum_{r\geq 0}(-t)^{d-r}\sum_{\Cbf\in\Cyc^r(N)} \wt(\Cbf).
	\end{equation*}
\end{proposition}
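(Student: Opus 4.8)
The plan is to reduce the statement to the single geometric fact that \emph{every simple cycle $C$ in $N$ has winding number $\wind(C)=1$}, and then substitute this into the general formula~\eqref{eq:Q_N_intro}. Granting the fact for the moment, observe that for an $r$-cycle $\Cbf=(C_1,\dots,C_r)$ we have $\wind(\Cbf)=\sum_{i=1}^r\wind(C_i)=r$, so the exponent $t^{d-\wind(\Cbf)}$ appearing in~\eqref{eq:Q_N_intro} equals $t^{d-r}$ for \emph{every} $\Cbf\in\Cyc^r(N)$. Collecting the sign and the power of $t$ into $(-1)^{d-r}t^{d-r}=(-t)^{d-r}$ then turns~\eqref{eq:Q_N_intro} into exactly the asserted formula~\eqref{eq:Q_N_planar_intro}. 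This step is purely formal.

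The heart of the argument is therefore the winding-number claim, which I would establish topologically. First I would note that, because $\Nt$ is planar, a simple cycle $C$ traces an embedded closed curve in the cylinder $\TC$: the cycle is a concatenation of the embedded arcs $h_\et$ through distinct vertices, and by the defining properties in Definition~\ref{dfn:network_planar} the interiors of these arcs are pairwise disjoint and avoid all vertices, so the only identifications are the prescribed vertex gluings. Hence $C$ is a Jordan curve in the annulus $\TC$. The second ingredient is the classification of simple closed curves on an annulus: such a curve is either null-homotopic or isotopic to the core circle, so its class in $\pi_1(\TC)\cong\Z$ — that is, its winding number — lies in $\{-1,0,1\}$. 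Finally, the last axiom of Definition~\ref{dfn:network} guarantees that any cycle in $N$ has \emph{positive} winding number, which forces $\wind(C)=1$.

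If one prefers a self-contained justification of the classification step rather than citing surface topology, I would argue by lifting. Suppose $\wind(C)=\ell\geq 2$. Lifting $C$ to the strip $\TS$ produces a properly embedded bi-infinite curve $\tilde C$, invariant under translation by $\ell\g$ (and under no smaller translation from $\Z\g$, since the lift meets $\proj^{-1}(v)$ only in multiples of $\ell\g$), running to $\pm\infty$ in the $\g$-direction and hence separating $\TS$ into an upper and a lower region. The full preimage $\proj^{-1}(C)$ then consists of the $\ell$ pairwise disjoint, pairwise distinct translates $\tilde C+j\g$ for $0\leq j\leq \ell-1$, and these disjoint proper separating lines are totally ordered by the ``lies below'' relation. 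Translation by $\g$ is a homeomorphism of the strip that preserves this order yet cyclically permutes the $\ell$ lines; but an order-preserving bijection of a finite totally ordered set is the identity, contradicting $\ell\geq 2$. Hence $\ell\leq 1$, and positivity gives $\ell=1$.

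The main obstacle is precisely this topological input — the classification of simple closed curves on the annulus, equivalently the separation/ordering argument above; everything else is bookkeeping. I would take particular care to verify that a graph-theoretic simple cycle genuinely traces an embedded Jordan curve, since this is the only place the planarity hypothesis is used essentially, and to confirm that ``lies below'' is a bona fide total order on the finitely many disjoint proper lines, which is the strip analogue of the Jordan curve theorem.
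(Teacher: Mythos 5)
Your proof is correct and takes essentially the same route as the paper: reduce to the claim that every simple cycle in a planar cylindrical network has winding number $1$, then combine the fact that such a cycle is an embedded loop in the cylinder with the positivity of winding numbers forced by Definition~\ref{dfn:network}. The paper's own proof is exactly this two-sentence argument, asserting the annulus classification without justification; your lifting/separation argument simply fills in that topological step in a self-contained way.
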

\begin{proof}
	As we have already mentioned in Section~\ref{sect:main}, it suffices to show every simple cycle in a planar cylindrical network has winding number $1$. Indeed, every simple cycle in $N$ represents a non-self-intersecting loop in the cylinder $\TC$ with a positive winding number which therefore has to be equal to $1$.
\end{proof}

By Theorem~\ref{thm:tuples_plee}, in order to prove the second part of Theorem~\ref{thm:recurrences} it suffices to show the following:
\begin{proposition}\label{prop:local}
	Every planar cylindrical network $\Nt$ is local.
\end{proposition}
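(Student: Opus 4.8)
The plan is to exhibit a local lifting $\Lift$ by cutting the cylinder $\TC$ open along a well-chosen arc and declaring $\Lift$ to consist of the lifts of the vertices of $N$ that land in the resulting fundamental domain. Concretely, I would look for a simple arc $\Gamma$ in $\TC$ joining the two boundary circles of the cylinder, disjoint from all vertices of $N$ and transverse to every edge, with the two crucial properties that (i) each edge of $N$ meets $\Gamma$ at most once, and (ii) every such crossing is \emph{coherently oriented}, i.e.\ in the direction of increasing winding. Lifting $\Gamma$ to the strip $\TS$ produces a shift-invariant family of pairwise disjoint arcs $\tilde\Gamma+k\g$ ($k\in\Z$) that cut $\TS$ into fundamental domains $F_k$, and I would take $\Lift$ to be the set of vertices of $\Vt$ lying in the half-open domain $F_0$ bounded by $\tilde\Gamma$ and $\tilde\Gamma+\g$.

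Granting such a $\Gamma$, locality is pure bookkeeping. Consider an edge of $\Nt$ starting at some $\liftt_i\in\Lift\subset F_0$; its projection to $N$ meets $\Gamma$ either zero times or once. In the first case the lifted edge stays inside $F_0$ and so ends at a vertex of $\Lift$, contributing $t^0$ to the entry $b_{ij}(t)$ in~\eqref{eq:Bt}; in the second case property (ii) forces the edge to exit $F_0$ through $\tilde\Gamma+\g$ and enter $F_1=F_0+\g$, so it ends at a vertex of $\Lift+\g$ and contributes $t^1$. Since no edge can exit $F_0$ backwards through $\tilde\Gamma$ (that would be an oppositely oriented crossing, excluded by (ii)) nor skip across a whole domain (excluded by (i)), every edge out of $\Lift$ lands in $\Lift$ or $\Lift+\g$, which is exactly the definition of a local lifting and yields $B(t)=C+tD$.

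Everything therefore rests on the existence of the arc $\Gamma$, and this is where planarity is used. I would start from any simple arc joining the two boundary circles, transverse to $N$ and missing the vertices, and put it into \emph{minimal position}, minimizing the number of transverse intersections with the edges of $N$. To get property (i), I would run an innermost-bigon argument: if some edge met $\Gamma$ twice, a subarc of that edge together with a subarc of $\Gamma$ would bound a disk in $\TC$ — here one uses that a single edge cannot wind around the cylinder, because its $\g$-translates are pairwise disjoint in the planar embedding, so the relevant loop is null-homotopic — and pushing $\Gamma$ across an innermost such disk would remove two intersections, contradicting minimality.

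Property (ii) is the delicate point, and is where I expect the main obstacle to lie. I would derive it from the fact, established in Proposition~\ref{prop:Q_N_planar}, that every simple cycle of $N$ has winding number exactly $1$: the algebraic intersection number of $\Gamma$ with any directed cycle $C$ equals $\wind(C)=1$, while (i) makes the geometric intersection number of $\Gamma$ with each individual edge at most $1$, so there is no room for a backward crossing on a cycle to be cancelled by a compensating forward one. The subtlety is to promote this from a statement about net counts on cycles to the assertion that \emph{every} crossing is coherently oriented, including crossings on edges lying on no directed cycle; I would handle such edges by a further local rerouting of $\Gamma$ compatible with minimal position. An appealing alternative, which I would develop in parallel as a sanity check, is to phrase the whole problem combinatorially: locality is equivalent to the feasibility of the system of difference constraints $\kappa_0(e)+\phi(v)-\phi(w)\in\{0,1\}$ for a vertex potential $\phi\colon V\to\Z$ (where shifting a lift by $\g$ corresponds to changing $\phi$), and $\Gamma$ is exactly the geometric realization of such a $\phi$; checking that $\wind(C)=1$ rules out all negative-weight cycles in the associated constraint graph would then give property (ii) directly.
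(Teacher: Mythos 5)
Your reformulation is correct, but it is not really a different route: the cutting arc $\Gamma$ (equivalently your potential $\phi$ with $\kappa_0(e)+\phi(v)-\phi(w)\in\{0,1\}$) is precisely the function $z:\Vt\to\Z$ with $z(\vt+\g)=z(\vt)+1$ satisfying~\eqref{eq:lipschitz} that the paper's proof sets out to construct, and passing between the two formulations is the easy part. The genuine gap is the existence proof, and the one inference you offer for the key property (ii) is false. Property (i) together with $\wind(C)=1$ for directed simple cycles (Proposition~\ref{prop:Q_N_planar}) does not force coherent crossings: a directed cycle with five edges can cross $\Gamma$ three times forward and twice backward, each edge at most once, with algebraic total $1$, so there is ample room for cancellation. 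Likewise, the cycles of your constraint digraph are \emph{mixed} closed walks of $N$ (edges traversed in either direction, with weight $\wind(W)$ plus the number of backward steps), not directed cycles of $N$, so the winding hypothesis says nothing about them. In fact no argument of this shape can work: the non-planar cylindrical network with two projected vertices $u,w$ and edge orbits $\ut\to\wtt-\g$ and $\ut\to\wtt+\g$ has no directed cycles at all (so the winding hypothesis holds vacuously), admits a $\Gamma$ satisfying (i) (each edge crosses a vertical arc exactly once, with opposite signs), and yet is not local, since the two constraints $\phi(u)-\phi(w)\in\{1,2\}$ and $\phi(u)-\phi(w)\in\{-1,0\}$ are incompatible. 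So planarity must enter well beyond Proposition~\ref{prop:Q_N_planar}; your ``further local rerouting of $\Gamma$'' is exactly the missing content, not a proof of it.

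For comparison, the paper avoids surface topology altogether and constructs $z$ by induction on the number of vertices of $N$: pick a ``cover relation'' --- an edge $(\ut,\vt)$ with $\proj(\ut)\ne\proj(\vt)$ admitting no directed path with at least two edges from $\ut$ to $\vt$ --- contract it together with all of its $\g$-shifts to get a smaller connected planar cylindrical network, and pull back the potential $z'$ given by induction; the winding-number-one fact enters only in the base case of a single projected vertex, where it forces every edge to go from $\vt_0$ to $\vt_0+\g$. Separately, your minimal-position argument for property (i) also needs repair: an innermost bigon between $\Gamma$ and an edge may contain vertices and further edges of $N$ (they cannot cross the edge, by planarity, but they can cross $\Gamma$ and terminate inside the disk), so the push-across move is not obviously crossing-decreasing; and the claim that ``a single edge cannot wind around the cylinder'' is wrong --- an edge $\vt\to\vt+\g$ projects to an embedded loop of winding number one --- so the null-homotopy of the bigon loop requires an argument as well.
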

\begin{proof}
	Showing that $\Nt$ is local amounts to constructing a function $z:\Vt\to\Z$ such that $z(\vt+\g)=z(\vt)+1$ and such that for every edge $\et=(\ut,\vt)$ of $\Nt$ we have
	\begin{equation}\label{eq:lipschitz}
	z(\ut)\leq z(\vt)\leq z(\ut)+1.
	\end{equation}
	Given such a function, it is clear that the set $\Lift:=z^{-1}(0)$ is a local lift for $\Nt$. We prove the existence of $z$ by induction on the number of vertices in $N$. Throughout, we assume that $\Nt$ is connected since it suffices to prove the result for any connected component of $\Nt$. The base case is when there is just one vertex $v$ in $N$, so let $\vt_0$ be any of its lifts and define $\vt_\ell=\vt_0+\ell\g, \ell\in Z$ to be the remaining lifts of $v$. We claim that all the edges coming out of $\vt_0$ end at $\vt_1$. Indeed, suppose that $\et=(\vt_0,\vt_\ell)$ is an edge that ends at some $\vt_\ell$. Then its projection is a simple loop in the cylinder $\TC$ with winding number $\ell$. As we have noted earlier, this can only happen when $\ell=1$. Thus we get $\ell=1$ for every edge $\et=(\vt_0,\vt_\ell)$, and it suffices to set $z(\vt_k)=k$ for all $k\in\Z$ to complete the base case.
	
	To do the induction step, consider a general connected planar cylindrical network $\Nt$. An edge $\et=(\ut,\vt)$ is called a \emph{cover relation} in $\Nt$ if $\proj(\ut)\neq\proj(\vt)$ and there is no path in $\Nt$ with at least two edges that starts at $\ut$ and ends at $\vt$. It is clear that if $N$ is connected and has at least two vertices then such a cover relation exists in $\Nt$ by part~\eqref{item:finite} of Definition~\ref{dfn:network}. So let $\et=(\ut,\vt)$ be a cover relation in $\Nt$. We can contract this edge $\et$ and all of its shifts, and this operation produces a smaller connected planar cylindrical network $\Nt'$ for which we already have a function $z'$ satisfying~\eqref{eq:lipschitz}. Let us put $z(\wtt)=z'(\wtt')$ for any vertex $\wtt\in\Vt$, where $\wtt'$ is the vertex in $\Nt'$ corresponding to $\wtt$. It is clear that this defines a function $z:\Vt\to\Z$ satisfying~\eqref{eq:lipschitz}. We are done with the proof.
\end{proof}


\section{Applications}\label{sect:applications}

\subsection{A recurrence for Schur polynomials}\label{sect:Schur}
Since the second part of Theorem~\ref{thm:recurrences} holds for any planar cylindrical network, we first demonstrate how it yields new results in one especially well-studied case of \emph{Schur polynomials}. 

Fix two integers $n,m\geq 1$ and consider the following planar cylindrical network $\Nt_{n,m}$. Its set of vertices $\Vt$ is identified with $\Z\times [n]$. Let $\x=(x_1,\dots,x_n)$ be a family of indeterminates. A vertex $\vt=(i,j)$ with $j\in[n]=\{1,2,\dots,n\}$ is connected to $(i+1,j)$ by an edge of weight $x_j$ and, assuming $j<n$, it is connected to $(i,j+1)$ by an edge of weight $1$. We set $\g:=(m,0)$. This defines the network $\Nt_{n,m}$ whose projection $N_{n,m}$ is a cylinder $[n]\times \Z_m$ of height $n$ and width $m$. See Figure~\ref{fig:Schur}.

  \def\aa{7}
  \def\bb{4}
\begin{figure}

\scalebox{0.7}{
\begin{tikzpicture}[scale=1.5]
\tikzset{myptr/.style={decoration={markings,mark=at position 1 with %
    {\arrow[scale=1.5,>=stealth]{>}}},postaction={decorate}}}
\draw[myptr,line width=0.25mm] (1,4) to node[midway,above] {$\g=(3,0)$} (4,4);
\node[draw,ellipse] (node0x0) at (0.00,0.00) {};
\node[draw,ellipse] (node0x1) at (0.00,1.00) {};
\node[draw,ellipse] (node0x2) at (0.00,2.00) {};
\node[draw,ellipse] (node0x3) at (0.00,3.00) {};
\node[draw,ellipse] (node1x0) at (1.00,0.00) {};
\node[draw,ellipse] (node1x1) at (1.00,1.00) {};
\node[draw,ellipse] (node1x2) at (1.00,2.00) {};
\node[draw,ellipse] (node1x3) at (1.00,3.00) {};
\node[draw,ellipse] (node2x0) at (2.00,0.00) {};
\node[draw,ellipse] (node2x1) at (2.00,1.00) {};
\node[draw,ellipse] (node2x2) at (2.00,2.00) {};
\node[draw,ellipse] (node2x3) at (2.00,3.00) {};
\node[draw,ellipse] (node3x0) at (3.00,0.00) {};
\node[draw,ellipse] (node3x1) at (3.00,1.00) {};
\node[draw,ellipse] (node3x2) at (3.00,2.00) {};
\node[draw,ellipse] (node3x3) at (3.00,3.00) {};
\node[draw,ellipse] (node4x0) at (4.00,0.00) {};
\node[draw,ellipse] (node4x1) at (4.00,1.00) {};
\node[draw,ellipse] (node4x2) at (4.00,2.00) {};
\node[draw,ellipse] (node4x3) at (4.00,3.00) {};
\node[draw,ellipse] (node5x0) at (5.00,0.00) {};
\node[draw,ellipse] (node5x1) at (5.00,1.00) {};
\node[draw,ellipse] (node5x2) at (5.00,2.00) {};
\node[draw,ellipse] (node5x3) at (5.00,3.00) {};
\node[draw,ellipse] (node6x0) at (6.00,0.00) {};
\node[draw,ellipse] (node6x1) at (6.00,1.00) {};
\node[draw,ellipse] (node6x2) at (6.00,2.00) {};
\node[draw,ellipse] (node6x3) at (6.00,3.00) {};
\draw[myptr] (node0x0) to node[midway,right] {$1$} (node0x1);
\draw[myptr] (node0x0) to node[midway,above] {$x_1$} (node1x0);
\draw[myptr] (node0x1) to node[midway,right] {$1$} (node0x2);
\draw[myptr] (node0x1) to node[midway,above] {$x_2$} (node1x1);
\draw[myptr] (node0x2) to node[midway,right] {$1$} (node0x3);
\draw[myptr] (node0x2) to node[midway,above] {$x_3$} (node1x2);
\draw[myptr] (node0x3) to node[midway,above] {$x_4$} (node1x3);
\draw[myptr] (node1x0) to node[midway,right] {$1$} (node1x1);
\draw[myptr] (node1x0) to node[midway,above] {$x_1$} (node2x0);
\draw[myptr] (node1x1) to node[midway,right] {$1$} (node1x2);
\draw[myptr] (node1x1) to node[midway,above] {$x_2$} (node2x1);
\draw[myptr] (node1x2) to node[midway,right] {$1$} (node1x3);
\draw[myptr] (node1x2) to node[midway,above] {$x_3$} (node2x2);
\draw[myptr] (node1x3) to node[midway,above] {$x_4$} (node2x3);
\draw[myptr] (node2x0) to node[midway,right] {$1$} (node2x1);
\draw[myptr] (node2x0) to node[midway,above] {$x_1$} (node3x0);
\draw[myptr] (node2x1) to node[midway,right] {$1$} (node2x2);
\draw[myptr] (node2x1) to node[midway,above] {$x_2$} (node3x1);
\draw[myptr] (node2x2) to node[midway,right] {$1$} (node2x3);
\draw[myptr] (node2x2) to node[midway,above] {$x_3$} (node3x2);
\draw[myptr] (node2x3) to node[midway,above] {$x_4$} (node3x3);
\draw[myptr] (node3x0) to node[midway,right] {$1$} (node3x1);
\draw[myptr] (node3x0) to node[midway,above] {$x_1$} (node4x0);
\draw[myptr] (node3x1) to node[midway,right] {$1$} (node3x2);
\draw[myptr] (node3x1) to node[midway,above] {$x_2$} (node4x1);
\draw[myptr] (node3x2) to node[midway,right] {$1$} (node3x3);
\draw[myptr] (node3x2) to node[midway,above] {$x_3$} (node4x2);
\draw[myptr] (node3x3) to node[midway,above] {$x_4$} (node4x3);
\draw[myptr] (node4x0) to node[midway,right] {$1$} (node4x1);
\draw[myptr] (node4x0) to node[midway,above] {$x_1$} (node5x0);
\draw[myptr] (node4x1) to node[midway,right] {$1$} (node4x2);
\draw[myptr] (node4x1) to node[midway,above] {$x_2$} (node5x1);
\draw[myptr] (node4x2) to node[midway,right] {$1$} (node4x3);
\draw[myptr] (node4x2) to node[midway,above] {$x_3$} (node5x2);
\draw[myptr] (node4x3) to node[midway,above] {$x_4$} (node5x3);
\draw[myptr] (node5x0) to node[midway,right] {$1$} (node5x1);
\draw[myptr] (node5x0) to node[midway,above] {$x_1$} (node6x0);
\draw[myptr] (node5x1) to node[midway,right] {$1$} (node5x2);
\draw[myptr] (node5x1) to node[midway,above] {$x_2$} (node6x1);
\draw[myptr] (node5x2) to node[midway,right] {$1$} (node5x3);
\draw[myptr] (node5x2) to node[midway,above] {$x_3$} (node6x2);
\draw[myptr] (node5x3) to node[midway,above] {$x_4$} (node6x3);
\draw[myptr] (node6x0) to node[midway,right] {$1$} (node6x1);
\draw[myptr] (node6x1) to node[midway,right] {$1$} (node6x2);
\draw[myptr] (node6x2) to node[midway,right] {$1$} (node6x3);
\draw[myptr] (-1,0) to node[midway,above] {$x_1$} (node0x0);
\draw[myptr] (node6x0) to node[midway,above] {$x_1$} (7,0);
\draw[myptr] (-1,1) to node[midway,above] {$x_2$} (node0x1);
\draw[myptr] (node6x1) to node[midway,above] {$x_2$} (7,1);
\draw[myptr] (-1,2) to node[midway,above] {$x_3$} (node0x2);
\draw[myptr] (node6x2) to node[midway,above] {$x_3$} (7,2);
\draw[myptr] (-1,3) to node[midway,above] {$x_4$} (node0x3);
\draw[myptr] (node6x3) to node[midway,above] {$x_4$} (7,3);
\node[anchor=east,scale=2] (dots1) at (-1.20,1.50) {$\dots$};
\node[anchor=west,scale=2] (dots2) at (7.20,1.50) {$\dots$};
\end{tikzpicture}}

\caption{\label{fig:Schur}The network $\Nt_{n,m}$ for $n=4$ and $m=3$.}
\end{figure}
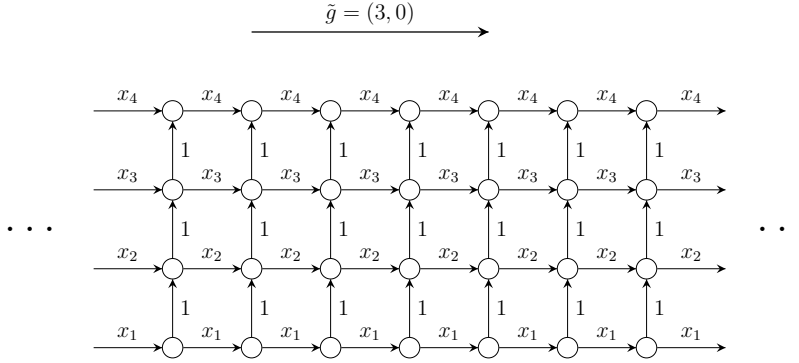

It is clear from~\eqref{eq:Q_N_planar_intro} that the polynomial $Q_{N_{n,m}}(t)$ is equal to 
\[Q_{N_{n,m}}(t)=(t-x_1^m)(t-x_2^m)\cdots(t-x_n^m).\]

The polynomial $Q_{N_{n,m}}(t)$ has degree $d=n$ and the coefficient of $t^{d-r}$ in it equals $(-1)^re_r(x_1^m,\dots,x_n^m)$.

Let now $1\leq r\leq n$ be an integer. A \emph{partition with at most $r$ parts} is a weakly decreasing sequence $\lambda=(\lambda_1\geq\lambda_2\geq\dots\geq \lambda_r\geq 0)$ of $r$ nonnegative integers. To each such partition one can associate a certain symmetric polynomial $s_\lambda(x_1,x_2,\dots,x_n)$ called the \emph{Schur polynomial} of $\l$, see~\cite[Section~7.10]{EC2} for the definition. Given a partition $\lambda$ with at most $r$ parts, we introduce an $r$-vertex $\vbft(\l)=((\l_r+1,n),(\l_{r-1}+2,n),\dots,(\l_1+r,n))$ in $\Nt_{n,m}$. We also fix an $r$-vertex $\ubft=((1,1),(2,1),\dots,(r,1))$. In terms of our network $\Nt_{n,m}$, the Schur polynomial $s_\l(x_1,\dots,x_n)$ equals
\[s_\l(x_1,x_2,\dots,x_n)=\ee(\ubft,\vbft(\l)).\]

We define the sum of two partitions $\l+\m$ componentwise, that is, $(\l+\m)_i=\l_i+\m_i$, similarly, $\ell\mu$ denotes a partition with $(\ell\mu)_i=\ell\mu_i$. We fix a partition $\m=m^r:=(m,m,\dots,m)$ with $r$ parts, in other words, the \emph{Young diagram} of $\mu$ is an $m\times r$ rectangle. 

\begin{theorem}\label{thm:Schur}
	The sequence $f(\ell)=s_{\l+\ell\m}(x_1,\dots,x_n)$ satisfies a linear recurrence for all but finitely many $\ell$ with characteristic polynomial $Q_{N_{n,m}}^\plee{r}(t)$.
\end{theorem}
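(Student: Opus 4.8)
The plan is to apply the second part of Theorem~\ref{thm:recurrences} directly, once one checks that the sequence $f$ appearing there coincides with $s_{\l+\ell\m}(x_1,\dots,x_n)$. The network $\Nt_{n,m}$ has only rightward edges (of weight $x_j$ in row $j$) and upward edges (of weight $1$), all shift-invariant and pairwise non-intersecting, so it is a planar cylindrical network and part~\eqref{item:thm:plee} applies; the excerpt already records $Q_{N_{n,m}}(t)=\prod_{j=1}^n(t-x_j^m)$, whence the characteristic polynomial in question is indeed $Q_{N_{n,m}}^\plee{r}(t)$.

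First I would match the shift with the addition of $\m=m^r$. Since the $k$-th component of $\vbft(\l)$ is $(\l_{r-k+1}+k,n)$ and $\g=(m,0)$, a coordinate computation gives $\vbft(\l)+\ell\g$ with $k$-th component $((\l+\ell\m)_{r-k+1}+k,n)$, i.e. $\vbft_\ell=\vbft(\l+\ell\m)$. Thus the sequence of Theorem~\ref{thm:recurrences} reads $f(\ell)=\sum_{\sigma\in\Sfr_r}\sgn(\sigma)\,\ee(\ubft,\sigma\vbft(\l+\ell\m))$, and by the Lindstr\"om--Gessel--Viennot theorem (Theorem~\ref{thm:LGV}) this equals $\det A(\ubft,\vbft(\l+\ell\m))$. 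The core step is therefore to show that only the identity permutation contributes, so that $f(\ell)=\ee(\ubft,\vbft(\l+\ell\m))=s_{\l+\ell\m}$ by the Schur interpretation recorded above.

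For this I would use the monotone (up-right) structure of paths in $\Nt_{n,m}$: two such paths can cross in the plane only by sharing a lattice vertex, so for monotone paths ``vertex-disjoint'' is equivalent to ``non-crossing,'' and a non-crossing family is automatically order-preserving. Since the sources lie in the bottom row at strictly increasing columns $1<2<\dots<r$ and the sinks lie in the top row at strictly increasing columns $\l_r+1<\l_{r-1}+2<\dots<\l_1+r$ (strict because $\l$ is weakly decreasing), any vertex-disjoint family connecting $\ut_i$ to $\vt_{\sigma(i)}$ forces $\sigma=\Id$. Hence $\ee(\ubft,\sigma\vbft_\ell)=0$ for $\sigma\neq\Id$, giving $f(\ell)=s_{\l+\ell\m}$, and the result follows from Theorem~\ref{thm:recurrences}\eqref{item:thm:plee}. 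The one delicate point is exactly this vanishing of the off-diagonal terms --- the classical non-permutability of monotone lattice paths --- which rests on the strict monotonicity of the sink columns just verified; a computation-heavy alternative is to evaluate the $(i,j)$ entry of $A(\ubft,\vbft_\ell)$ as $h_{(\l+\ell\m)_{r-j+1}+j-i}(x_1,\dots,x_n)$ and recognize $\det A$ as a Jacobi--Trudi determinant for $s_{\l+\ell\m}$.
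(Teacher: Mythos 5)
Your proposal is correct and follows essentially the same route as the paper's proof: identify $f(\ell)$ with the LGV sum for $(\ubft,\vbft(\l+\ell\m))$, observe that only $\sigma=\Id$ contributes (the non-permutability of these $r$-vertices), and invoke part~\eqref{item:thm:plee} of Theorem~\ref{thm:recurrences} for the planar network $\Nt_{n,m}$. The only difference is that you spell out what the paper asserts as ``clear'' --- the coordinate check $\vbft(\l)+\ell\g=\vbft(\l+\ell\m)$ and the crossing argument for monotone lattice paths --- which is a welcome but not substantively different addition.
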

\begin{proof}
	By Proposition~\ref{prop:local}, the planar cylindrical network $\Nt$ is local. It is clear that the only permutation $\sigma$ that gives a non-zero contribution in~\eqref{eq:LGV} is the identity (in this case the $r$-vertices $\ubft$ and $\vbft$ are called \emph{non-permutable}, see~\cite{GV}). Thus the sequence $f(\ell)$ is exactly a sequence satisfying the assumptions of Theorem~\ref{thm:tuples_plee} and the result follows.
\end{proof}

The polynomial $Q_{N_{n,m}}^\plee{r}(t)$ has $n\choose r$ roots of the form $(x_{i_1}x_{i_2}\cdots x_{i_r})^m$ for all $1\leq i_1<i_2<\dots<i_r\leq n$. 

\begin{remark}
	Theorem~\ref{thm:Schur} extends trivially to the sequence $f(\ell)=s_{(\l+\ell\m)/\kappa}(x_1,\dots,x_n)$ of \emph{skew Schur polynomials} for any partitions $\l,\kappa$ with at most $r$ parts. More generally, given four partitions $\l,\m,\nu,\kappa$, one can consider a sequence $f(\ell)=s_{(\l+\ell\m)/(\kappa+\ell\nu)}(x_1,\dots,x_n)$. It was shown by Alexandersson~\cite{Per} that this sequence satisfies (for all but finitely many $\ell$ under a mild condition on $\l,\m,\nu,\kappa$) a linear recurrence with characteristic polynomial 
	\[Q_{\mu/\nu}(t)=\prod_T (t-\x^{\wt(T)}),\]
	where $T$ runs over all semistandard Young tableaux of shape $\m/\nu$ with entries in $[n]$. In~\cite[Conjecture~24]{Per}, he conjectured that $f$ satisfies a shorter linear recurrence with characteristic polynomial 
	\[P_{\mu/\nu}(t)=\prod_{\wbf\in W} (t-\x^{\wbf}),\] 
	where $W$ is a certain subset of $\N^n$ defined explicitly. In the case $\mu=m^r,\nu=0^r=\emptyset$, it is easy to see that the polynomial $Q^\plee{r}_{N_{n,m}}(t)$ equals the polynomial $P_{\mu/\nu}(t)$ and they both have degree $n\choose r$. This number is much smaller\footnote{Indeed, by the Weyl dimension formula~\cite[Corollary 7.21.4]{EC2}, $Q_{\mu/\nu}(t)$ has degree $O(n^{mr})$ as $n\to\infty$. On the other hand, $Q^\plee{r}_{N_{n,m}}(t)$ has degree ${n\choose r}=O(n^r)$ as $n\to\infty$.} than the degree of $Q_{\mu/\nu}(t)$ which is the number of semistandard Young tableaux of rectangular shape $m^r$ with entries in $[n]$. Thus Theorem~\ref{thm:Schur} gives in this case a new, shorter linear recurrence which is conjectured in~\cite[Conjecture~24]{Per} to be the minimal recurrence for the sequence $f(\ell)=s_{(\l+\ell\m)/\kappa}(x_1,\dots,x_n)$.
\end{remark}

\subsection{Lozenge tilings, plane partitions, and Carlitz $q$-Fibonacci polynomials}\label{sect:lozenge}

\def\sk{\Y}
\def\sksh{\l/\m}
\def\Y{{\mathcal{Y}}}
\def\a{\alpha}
\def\b{\beta}
\def\bij{\Pbft}
Given a partition $\l=(\l_1,\l_2,\dots,\l_m)$, define its \emph{Young diagram} $\Y(\l)$ to be the set of $1\times 1$ ``boxes'' in the plane centered at $(i,j)$ for every pair $(i,j)$ satisfying $1\leq i\leq m$ and $1\leq j\leq \l_i$. We use the \emph{English notation} and draw the boxes of $\Y(\l)$ using matrix coordinates, see e.g. Figure~\ref{fig:lozenges}~(a). Consider two partitions $\l,\m$ such that $\Y(\m)\subset\Y(\l)$. Then we define the \emph{skew shape} $\Y(\l/\m)$ to be the set-theoretic difference $\Y(\l)\setminus\Y(\m)$.

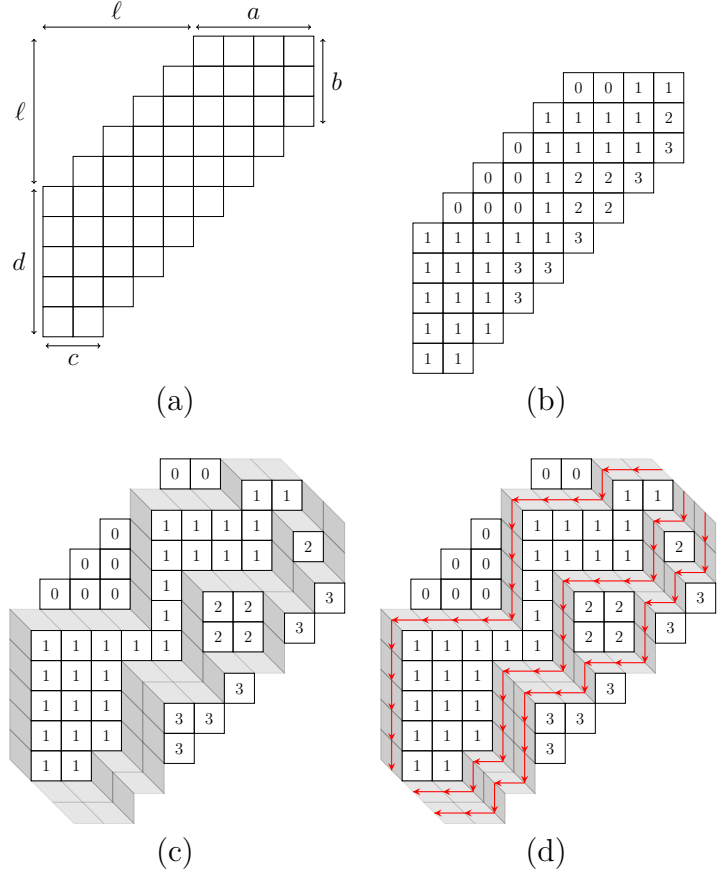
\begin{figure}
\centering

\begin{tabular}{cc}
\scalebox{0.4}{
\begin{tikzpicture}[scale=1.0,yscale=-1]
\draw [] (5.0,0.0) rectangle (6.0,1.0); \draw [] (6.0,0.0) rectangle (7.0,1.0); \draw [] (7.0,0.0) rectangle (8.0,1.0); \draw [] (8.0,0.0) rectangle (9.0,1.0); \draw [] (4.0,1.0) rectangle (5.0,2.0); \draw [] (5.0,1.0) rectangle (6.0,2.0); \draw [] (6.0,1.0) rectangle (7.0,2.0); \draw [] (7.0,1.0) rectangle (8.0,2.0); \draw [] (8.0,1.0) rectangle (9.0,2.0); \draw [] (3.0,2.0) rectangle (4.0,3.0); \draw [] (4.0,2.0) rectangle (5.0,3.0); \draw [] (5.0,2.0) rectangle (6.0,3.0); \draw [] (6.0,2.0) rectangle (7.0,3.0); \draw [] (7.0,2.0) rectangle (8.0,3.0); \draw [] (8.0,2.0) rectangle (9.0,3.0); \draw [] (2.0,3.0) rectangle (3.0,4.0); \draw [] (3.0,3.0) rectangle (4.0,4.0); \draw [] (4.0,3.0) rectangle (5.0,4.0); \draw [] (5.0,3.0) rectangle (6.0,4.0); \draw [] (6.0,3.0) rectangle (7.0,4.0); \draw [] (7.0,3.0) rectangle (8.0,4.0); \draw [] (1.0,4.0) rectangle (2.0,5.0); \draw [] (2.0,4.0) rectangle (3.0,5.0); \draw [] (3.0,4.0) rectangle (4.0,5.0); \draw [] (4.0,4.0) rectangle (5.0,5.0); \draw [] (5.0,4.0) rectangle (6.0,5.0); \draw [] (6.0,4.0) rectangle (7.0,5.0); \draw [] (0.0,5.0) rectangle (1.0,6.0); \draw [] (1.0,5.0) rectangle (2.0,6.0); \draw [] (2.0,5.0) rectangle (3.0,6.0); \draw [] (3.0,5.0) rectangle (4.0,6.0); \draw [] (4.0,5.0) rectangle (5.0,6.0); \draw [] (5.0,5.0) rectangle (6.0,6.0); \draw [] (0.0,6.0) rectangle (1.0,7.0); \draw [] (1.0,6.0) rectangle (2.0,7.0); \draw [] (2.0,6.0) rectangle (3.0,7.0); \draw [] (3.0,6.0) rectangle (4.0,7.0); \draw [] (4.0,6.0) rectangle (5.0,7.0); \draw [] (0.0,7.0) rectangle (1.0,8.0); \draw [] (1.0,7.0) rectangle (2.0,8.0); \draw [] (2.0,7.0) rectangle (3.0,8.0); \draw [] (3.0,7.0) rectangle (4.0,8.0); \draw [] (0.0,8.0) rectangle (1.0,9.0); \draw [] (1.0,8.0) rectangle (2.0,9.0); \draw [] (2.0,8.0) rectangle (3.0,9.0); \draw [] (0.0,9.0) rectangle (1.0,10.0); \draw [] (1.0,9.0) rectangle (2.0,10.0); \draw[<->,line width=0.25pt] (0.00,-0.30) to node[midway,above,scale=2] {$\ell$} (4.90,-0.30);
\draw[<->,line width=0.25pt] (-0.30,0.10) to node[midway,left,scale=2] {$\ell$} (-0.30,4.90);
\draw[<->,line width=0.25pt] (5.10,-0.30) to node[midway,above,scale=2] {$a$} (8.90,-0.30);
\draw[<->,line width=0.25pt] (9.30,0.10) to node[midway,right,scale=2] {$b$} (9.30,2.90);
\draw[<->,line width=0.25pt] (0.10,10.30) to node[midway,below,scale=2] {$c$} (1.90,10.30);
\draw[<->,line width=0.25pt] (-0.30,5.10) to node[midway,left,scale=2] {$d$} (-0.30,9.90);
\end{tikzpicture}}
&
\scalebox{0.4}{
\begin{tikzpicture}[scale=1.0,yscale=-1]
\draw [] (5.0,0.0) rectangle (6.0,1.0); \node[anchor=center,scale=1.3] (n0x5) at (5.50,0.50) {$0$};
\draw [] (6.0,0.0) rectangle (7.0,1.0); \node[anchor=center,scale=1.3] (n0x6) at (6.50,0.50) {$0$};
\draw [] (7.0,0.0) rectangle (8.0,1.0); \node[anchor=center,scale=1.3] (n0x7) at (7.50,0.50) {$1$};
\draw [] (8.0,0.0) rectangle (9.0,1.0); \node[anchor=center,scale=1.3] (n0x8) at (8.50,0.50) {$1$};
\draw [] (4.0,1.0) rectangle (5.0,2.0); \node[anchor=center,scale=1.3] (n1x4) at (4.50,1.50) {$1$};
\draw [] (5.0,1.0) rectangle (6.0,2.0); \node[anchor=center,scale=1.3] (n1x5) at (5.50,1.50) {$1$};
\draw [] (6.0,1.0) rectangle (7.0,2.0); \node[anchor=center,scale=1.3] (n1x6) at (6.50,1.50) {$1$};
\draw [] (7.0,1.0) rectangle (8.0,2.0); \node[anchor=center,scale=1.3] (n1x7) at (7.50,1.50) {$1$};
\draw [] (8.0,1.0) rectangle (9.0,2.0); \node[anchor=center,scale=1.3] (n1x8) at (8.50,1.50) {$2$};
\draw [] (3.0,2.0) rectangle (4.0,3.0); \node[anchor=center,scale=1.3] (n2x3) at (3.50,2.50) {$0$};
\draw [] (4.0,2.0) rectangle (5.0,3.0); \node[anchor=center,scale=1.3] (n2x4) at (4.50,2.50) {$1$};
\draw [] (5.0,2.0) rectangle (6.0,3.0); \node[anchor=center,scale=1.3] (n2x5) at (5.50,2.50) {$1$};
\draw [] (6.0,2.0) rectangle (7.0,3.0); \node[anchor=center,scale=1.3] (n2x6) at (6.50,2.50) {$1$};
\draw [] (7.0,2.0) rectangle (8.0,3.0); \node[anchor=center,scale=1.3] (n2x7) at (7.50,2.50) {$1$};
\draw [] (8.0,2.0) rectangle (9.0,3.0); \node[anchor=center,scale=1.3] (n2x8) at (8.50,2.50) {$3$};
\draw [] (2.0,3.0) rectangle (3.0,4.0); \node[anchor=center,scale=1.3] (n3x2) at (2.50,3.50) {$0$};
\draw [] (3.0,3.0) rectangle (4.0,4.0); \node[anchor=center,scale=1.3] (n3x3) at (3.50,3.50) {$0$};
\draw [] (4.0,3.0) rectangle (5.0,4.0); \node[anchor=center,scale=1.3] (n3x4) at (4.50,3.50) {$1$};
\draw [] (5.0,3.0) rectangle (6.0,4.0); \node[anchor=center,scale=1.3] (n3x5) at (5.50,3.50) {$2$};
\draw [] (6.0,3.0) rectangle (7.0,4.0); \node[anchor=center,scale=1.3] (n3x6) at (6.50,3.50) {$2$};
\draw [] (7.0,3.0) rectangle (8.0,4.0); \node[anchor=center,scale=1.3] (n3x7) at (7.50,3.50) {$3$};
\draw [] (1.0,4.0) rectangle (2.0,5.0); \node[anchor=center,scale=1.3] (n4x1) at (1.50,4.50) {$0$};
\draw [] (2.0,4.0) rectangle (3.0,5.0); \node[anchor=center,scale=1.3] (n4x2) at (2.50,4.50) {$0$};
\draw [] (3.0,4.0) rectangle (4.0,5.0); \node[anchor=center,scale=1.3] (n4x3) at (3.50,4.50) {$0$};
\draw [] (4.0,4.0) rectangle (5.0,5.0); \node[anchor=center,scale=1.3] (n4x4) at (4.50,4.50) {$1$};
\draw [] (5.0,4.0) rectangle (6.0,5.0); \node[anchor=center,scale=1.3] (n4x5) at (5.50,4.50) {$2$};
\draw [] (6.0,4.0) rectangle (7.0,5.0); \node[anchor=center,scale=1.3] (n4x6) at (6.50,4.50) {$2$};
\draw [] (0.0,5.0) rectangle (1.0,6.0); \node[anchor=center,scale=1.3] (n5x0) at (0.50,5.50) {$1$};
\draw [] (1.0,5.0) rectangle (2.0,6.0); \node[anchor=center,scale=1.3] (n5x1) at (1.50,5.50) {$1$};
\draw [] (2.0,5.0) rectangle (3.0,6.0); \node[anchor=center,scale=1.3] (n5x2) at (2.50,5.50) {$1$};
\draw [] (3.0,5.0) rectangle (4.0,6.0); \node[anchor=center,scale=1.3] (n5x3) at (3.50,5.50) {$1$};
\draw [] (4.0,5.0) rectangle (5.0,6.0); \node[anchor=center,scale=1.3] (n5x4) at (4.50,5.50) {$1$};
\draw [] (5.0,5.0) rectangle (6.0,6.0); \node[anchor=center,scale=1.3] (n5x5) at (5.50,5.50) {$3$};
\draw [] (0.0,6.0) rectangle (1.0,7.0); \node[anchor=center,scale=1.3] (n6x0) at (0.50,6.50) {$1$};
\draw [] (1.0,6.0) rectangle (2.0,7.0); \node[anchor=center,scale=1.3] (n6x1) at (1.50,6.50) {$1$};
\draw [] (2.0,6.0) rectangle (3.0,7.0); \node[anchor=center,scale=1.3] (n6x2) at (2.50,6.50) {$1$};
\draw [] (3.0,6.0) rectangle (4.0,7.0); \node[anchor=center,scale=1.3] (n6x3) at (3.50,6.50) {$3$};
\draw [] (4.0,6.0) rectangle (5.0,7.0); \node[anchor=center,scale=1.3] (n6x4) at (4.50,6.50) {$3$};
\draw [] (0.0,7.0) rectangle (1.0,8.0); \node[anchor=center,scale=1.3] (n7x0) at (0.50,7.50) {$1$};
\draw [] (1.0,7.0) rectangle (2.0,8.0); \node[anchor=center,scale=1.3] (n7x1) at (1.50,7.50) {$1$};
\draw [] (2.0,7.0) rectangle (3.0,8.0); \node[anchor=center,scale=1.3] (n7x2) at (2.50,7.50) {$1$};
\draw [] (3.0,7.0) rectangle (4.0,8.0); \node[anchor=center,scale=1.3] (n7x3) at (3.50,7.50) {$3$};
\draw [] (0.0,8.0) rectangle (1.0,9.0); \node[anchor=center,scale=1.3] (n8x0) at (0.50,8.50) {$1$};
\draw [] (1.0,8.0) rectangle (2.0,9.0); \node[anchor=center,scale=1.3] (n8x1) at (1.50,8.50) {$1$};
\draw [] (2.0,8.0) rectangle (3.0,9.0); \node[anchor=center,scale=1.3] (n8x2) at (2.50,8.50) {$1$};
\draw [] (0.0,9.0) rectangle (1.0,10.0); \node[anchor=center,scale=1.3] (n9x0) at (0.50,9.50) {$1$};
\draw [] (1.0,9.0) rectangle (2.0,10.0); \node[anchor=center,scale=1.3] (n9x1) at (1.50,9.50) {$1$};
\end{tikzpicture}}
\\

(a)
&
(b)
\\

&
 
\\

\scalebox{0.4}{
\begin{tikzpicture}[scale=1.0,yscale=-1]
\draw [] (5.0,0.0) rectangle (6.0,1.0); \node[anchor=center,scale=1.3] (nn0x5) at (5.50,0.50) {$0$};
\draw [] (6.0,0.0) rectangle (7.0,1.0); \node[anchor=center,scale=1.3] (nn0x6) at (6.50,0.50) {$0$};
\draw [fill=black, opacity=0.2] (7.0,0.0) -- (7.71,0.71) -- (7.71,1.71) -- (7.0,1.0) -- cycle;
\draw [] (7.71,0.71) rectangle (8.71,1.71); \node[anchor=center,scale=1.3] (nn0x7) at (8.21,1.21) {$1$};
\draw [] (8.71,0.71) rectangle (9.71,1.71); \node[anchor=center,scale=1.3] (nn0x8) at (9.21,1.21) {$1$};
\draw [fill=black, opacity=0.2] (9.71,0.71) -- (10.420000000000002,1.42) -- (10.420000000000002,2.42) -- (9.71,1.71) -- cycle;
\draw [fill=black, opacity=0.2] (10.42,1.42) -- (11.129999999999999,2.13) -- (11.129999999999999,3.13) -- (10.42,2.42) -- cycle;
\draw [fill=black, opacity=0.2] (4.0,1.0) -- (4.71,1.71) -- (4.71,2.71) -- (4.0,2.0) -- cycle;
\draw [] (4.71,1.71) rectangle (5.71,2.71); \node[anchor=center,scale=1.3] (nn1x4) at (5.21,2.21) {$1$};
\draw [] (5.71,1.71) rectangle (6.71,2.71); \node[anchor=center,scale=1.3] (nn1x5) at (6.21,2.21) {$1$};
\draw [] (6.71,1.71) rectangle (7.71,2.71); \node[anchor=center,scale=1.3] (nn1x6) at (7.21,2.21) {$1$};
\draw [] (7.71,1.71) rectangle (8.71,2.71); \node[anchor=center,scale=1.3] (nn1x7) at (8.21,2.21) {$1$};
\draw [fill=black, opacity=0.2] (8.71,1.71) -- (9.420000000000002,2.42) -- (9.420000000000002,3.42) -- (8.71,2.71) -- cycle;
\draw [] (9.42,2.42) rectangle (10.42,3.42); \node[anchor=center,scale=1.3] (nn1x8) at (9.92,2.92) {$2$};
\draw [fill=black, opacity=0.2] (10.42,2.42) -- (11.129999999999999,3.13) -- (11.129999999999999,4.13) -- (10.42,3.42) -- cycle;
\draw [] (3.0,2.0) rectangle (4.0,3.0); \node[anchor=center,scale=1.3] (nn2x3) at (3.50,2.50) {$0$};
\draw [fill=black, opacity=0.2] (4.0,2.0) -- (4.71,2.71) -- (4.71,3.71) -- (4.0,3.0) -- cycle;
\draw [] (4.71,2.71) rectangle (5.71,3.71); \node[anchor=center,scale=1.3] (nn2x4) at (5.21,3.21) {$1$};
\draw [] (5.71,2.71) rectangle (6.71,3.71); \node[anchor=center,scale=1.3] (nn2x5) at (6.21,3.21) {$1$};
\draw [] (6.71,2.71) rectangle (7.71,3.71); \node[anchor=center,scale=1.3] (nn2x6) at (7.21,3.21) {$1$};
\draw [] (7.71,2.71) rectangle (8.71,3.71); \node[anchor=center,scale=1.3] (nn2x7) at (8.21,3.21) {$1$};
\draw [fill=black, opacity=0.2] (8.71,2.71) -- (9.420000000000002,3.42) -- (9.420000000000002,4.42) -- (8.71,3.71) -- cycle;
\draw [fill=black, opacity=0.2] (9.42,3.42) -- (10.129999999999999,4.13) -- (10.129999999999999,5.13) -- (9.42,4.42) -- cycle;
\draw [] (10.129999999999999,4.13) rectangle (11.129999999999999,5.13); \node[anchor=center,scale=1.3] (nn2x8) at (10.63,4.63) {$3$};
\draw [] (2.0,3.0) rectangle (3.0,4.0); \node[anchor=center,scale=1.3] (nn3x2) at (2.50,3.50) {$0$};
\draw [] (3.0,3.0) rectangle (4.0,4.0); \node[anchor=center,scale=1.3] (nn3x3) at (3.50,3.50) {$0$};
\draw [fill=black, opacity=0.2] (4.0,3.0) -- (4.71,3.71) -- (4.71,4.71) -- (4.0,4.0) -- cycle;
\draw [] (4.71,3.71) rectangle (5.71,4.71); \node[anchor=center,scale=1.3] (nn3x4) at (5.21,4.21) {$1$};
\draw [fill=black, opacity=0.2] (5.71,3.71) -- (6.42,4.42) -- (6.42,5.42) -- (5.71,4.71) -- cycle;
\draw [] (6.42,4.42) rectangle (7.42,5.42); \node[anchor=center,scale=1.3] (nn3x5) at (6.92,4.92) {$2$};
\draw [] (7.42,4.42) rectangle (8.42,5.42); \node[anchor=center,scale=1.3] (nn3x6) at (7.92,4.92) {$2$};
\draw [fill=black, opacity=0.2] (8.42,4.42) -- (9.129999999999999,5.13) -- (9.129999999999999,6.13) -- (8.42,5.42) -- cycle;
\draw [] (9.129999999999999,5.13) rectangle (10.129999999999999,6.13); \node[anchor=center,scale=1.3] (nn3x7) at (9.63,5.63) {$3$};
\draw [] (1.0,4.0) rectangle (2.0,5.0); \node[anchor=center,scale=1.3] (nn4x1) at (1.50,4.50) {$0$};
\draw [] (2.0,4.0) rectangle (3.0,5.0); \node[anchor=center,scale=1.3] (nn4x2) at (2.50,4.50) {$0$};
\draw [] (3.0,4.0) rectangle (4.0,5.0); \node[anchor=center,scale=1.3] (nn4x3) at (3.50,4.50) {$0$};
\draw [fill=black, opacity=0.2] (4.0,4.0) -- (4.71,4.71) -- (4.71,5.71) -- (4.0,5.0) -- cycle;
\draw [] (4.71,4.71) rectangle (5.71,5.71); \node[anchor=center,scale=1.3] (nn4x4) at (5.21,5.21) {$1$};
\draw [fill=black, opacity=0.2] (5.71,4.71) -- (6.42,5.42) -- (6.42,6.42) -- (5.71,5.71) -- cycle;
\draw [] (6.42,5.42) rectangle (7.42,6.42); \node[anchor=center,scale=1.3] (nn4x5) at (6.92,5.92) {$2$};
\draw [] (7.42,5.42) rectangle (8.42,6.42); \node[anchor=center,scale=1.3] (nn4x6) at (7.92,5.92) {$2$};
\draw [fill=black, opacity=0.2] (8.42,5.42) -- (9.129999999999999,6.13) -- (9.129999999999999,7.13) -- (8.42,6.42) -- cycle;
\draw [fill=black, opacity=0.2] (0.0,5.0) -- (0.71,5.71) -- (0.71,6.71) -- (0.0,6.0) -- cycle;
\draw [] (0.71,5.71) rectangle (1.71,6.71); \node[anchor=center,scale=1.3] (nn5x0) at (1.21,6.21) {$1$};
\draw [] (1.71,5.71) rectangle (2.71,6.71); \node[anchor=center,scale=1.3] (nn5x1) at (2.21,6.21) {$1$};
\draw [] (2.71,5.71) rectangle (3.71,6.71); \node[anchor=center,scale=1.3] (nn5x2) at (3.21,6.21) {$1$};
\draw [] (3.71,5.71) rectangle (4.71,6.71); \node[anchor=center,scale=1.3] (nn5x3) at (4.21,6.21) {$1$};
\draw [] (4.71,5.71) rectangle (5.71,6.71); \node[anchor=center,scale=1.3] (nn5x4) at (5.21,6.21) {$1$};
\draw [fill=black, opacity=0.2] (5.71,5.71) -- (6.42,6.42) -- (6.42,7.42) -- (5.71,6.71) -- cycle;
\draw [fill=black, opacity=0.2] (6.42,6.42) -- (7.13,7.13) -- (7.13,8.129999999999999) -- (6.42,7.42) -- cycle;
\draw [] (7.13,7.13) rectangle (8.129999999999999,8.129999999999999); \node[anchor=center,scale=1.3] (nn5x5) at (7.63,7.63) {$3$};
\draw [fill=black, opacity=0.2] (0.0,6.0) -- (0.71,6.71) -- (0.71,7.71) -- (0.0,7.0) -- cycle;
\draw [] (0.71,6.71) rectangle (1.71,7.71); \node[anchor=center,scale=1.3] (nn6x0) at (1.21,7.21) {$1$};
\draw [] (1.71,6.71) rectangle (2.71,7.71); \node[anchor=center,scale=1.3] (nn6x1) at (2.21,7.21) {$1$};
\draw [] (2.71,6.71) rectangle (3.71,7.71); \node[anchor=center,scale=1.3] (nn6x2) at (3.21,7.21) {$1$};
\draw [fill=black, opacity=0.2] (3.71,6.71) -- (4.42,7.42) -- (4.42,8.42) -- (3.71,7.71) -- cycle;
\draw [fill=black, opacity=0.2] (4.42,7.42) -- (5.13,8.129999999999999) -- (5.13,9.129999999999999) -- (4.42,8.42) -- cycle;
\draw [] (5.13,8.129999999999999) rectangle (6.13,9.129999999999999); \node[anchor=center,scale=1.3] (nn6x3) at (5.63,8.63) {$3$};
\draw [] (6.13,8.129999999999999) rectangle (7.13,9.129999999999999); \node[anchor=center,scale=1.3] (nn6x4) at (6.63,8.63) {$3$};
\draw [fill=black, opacity=0.2] (0.0,7.0) -- (0.71,7.71) -- (0.71,8.71) -- (0.0,8.0) -- cycle;
\draw [] (0.71,7.71) rectangle (1.71,8.71); \node[anchor=center,scale=1.3] (nn7x0) at (1.21,8.21) {$1$};
\draw [] (1.71,7.71) rectangle (2.71,8.71); \node[anchor=center,scale=1.3] (nn7x1) at (2.21,8.21) {$1$};
\draw [] (2.71,7.71) rectangle (3.71,8.71); \node[anchor=center,scale=1.3] (nn7x2) at (3.21,8.21) {$1$};
\draw [fill=black, opacity=0.2] (3.71,7.71) -- (4.42,8.42) -- (4.42,9.42) -- (3.71,8.71) -- cycle;
\draw [fill=black, opacity=0.2] (4.42,8.42) -- (5.13,9.129999999999999) -- (5.13,10.129999999999999) -- (4.42,9.42) -- cycle;
\draw [] (5.13,9.129999999999999) rectangle (6.13,10.129999999999999); \node[anchor=center,scale=1.3] (nn7x3) at (5.63,9.63) {$3$};
\draw [fill=black, opacity=0.2] (0.0,8.0) -- (0.71,8.71) -- (0.71,9.71) -- (0.0,9.0) -- cycle;
\draw [] (0.71,8.71) rectangle (1.71,9.71); \node[anchor=center,scale=1.3] (nn8x0) at (1.21,9.21) {$1$};
\draw [] (1.71,8.71) rectangle (2.71,9.71); \node[anchor=center,scale=1.3] (nn8x1) at (2.21,9.21) {$1$};
\draw [] (2.71,8.71) rectangle (3.71,9.71); \node[anchor=center,scale=1.3] (nn8x2) at (3.21,9.21) {$1$};
\draw [fill=black, opacity=0.2] (3.71,8.71) -- (4.42,9.420000000000002) -- (4.42,10.420000000000002) -- (3.71,9.71) -- cycle;
\draw [fill=black, opacity=0.2] (4.42,9.42) -- (5.13,10.129999999999999) -- (5.13,11.129999999999999) -- (4.42,10.42) -- cycle;
\draw [fill=black, opacity=0.2] (0.0,9.0) -- (0.71,9.71) -- (0.71,10.71) -- (0.0,10.0) -- cycle;
\draw [] (0.71,9.71) rectangle (1.71,10.71); \node[anchor=center,scale=1.3] (nn9x0) at (1.21,10.21) {$1$};
\draw [] (1.71,9.71) rectangle (2.71,10.71); \node[anchor=center,scale=1.3] (nn9x1) at (2.21,10.21) {$1$};
\draw [fill=black, opacity=0.2] (2.71,9.71) -- (3.42,10.420000000000002) -- (3.42,11.420000000000002) -- (2.71,10.71) -- cycle;
\draw [fill=black, opacity=0.2] (3.42,10.42) -- (4.13,11.129999999999999) -- (4.13,12.129999999999999) -- (3.42,11.42) -- cycle;
\draw [fill=black, opacity=0.1] (0.0,5.0) -- (0.71,5.71) -- (1.71,5.71) -- (1.0,5.0) -- cycle;
\draw [fill=black, opacity=0.1] (0.71,10.71) -- (1.42,11.420000000000002) -- (2.42,11.420000000000002) -- (1.71,10.71) -- cycle;
\draw [fill=black, opacity=0.1] (1.42,11.42) -- (2.13,12.129999999999999) -- (3.13,12.129999999999999) -- (2.42,11.42) -- cycle;
\draw [fill=black, opacity=0.1] (1.0,5.0) -- (1.71,5.71) -- (2.71,5.71) -- (2.0,5.0) -- cycle;
\draw [fill=black, opacity=0.1] (1.71,10.71) -- (2.42,11.420000000000002) -- (3.42,11.420000000000002) -- (2.71,10.71) -- cycle;
\draw [fill=black, opacity=0.1] (2.42,11.42) -- (3.13,12.129999999999999) -- (4.13,12.129999999999999) -- (3.42,11.42) -- cycle;
\draw [fill=black, opacity=0.1] (2.0,5.0) -- (2.71,5.71) -- (3.71,5.71) -- (3.0,5.0) -- cycle;
\draw [fill=black, opacity=0.1] (2.71,9.71) -- (3.42,10.420000000000002) -- (4.42,10.420000000000002) -- (3.71,9.71) -- cycle;
\draw [fill=black, opacity=0.1] (3.42,10.42) -- (4.13,11.129999999999999) -- (5.13,11.129999999999999) -- (4.42,10.42) -- cycle;
\draw [fill=black, opacity=0.1] (3.0,5.0) -- (3.71,5.71) -- (4.71,5.71) -- (4.0,5.0) -- cycle;
\draw [fill=black, opacity=0.1] (3.71,6.71) -- (4.42,7.42) -- (5.42,7.42) -- (4.71,6.71) -- cycle;
\draw [fill=black, opacity=0.1] (4.42,7.42) -- (5.13,8.129999999999999) -- (6.13,8.129999999999999) -- (5.42,7.42) -- cycle;
\draw [fill=black, opacity=0.1] (4.0,1.0) -- (4.71,1.71) -- (5.71,1.71) -- (5.0,1.0) -- cycle;
\draw [fill=black, opacity=0.1] (4.71,6.71) -- (5.42,7.42) -- (6.42,7.42) -- (5.71,6.71) -- cycle;
\draw [fill=black, opacity=0.1] (5.42,7.42) -- (6.13,8.129999999999999) -- (7.13,8.129999999999999) -- (6.42,7.42) -- cycle;
\draw [fill=black, opacity=0.1] (5.0,1.0) -- (5.71,1.71) -- (6.71,1.71) -- (6.0,1.0) -- cycle;
\draw [fill=black, opacity=0.1] (5.71,3.71) -- (6.42,4.42) -- (7.42,4.42) -- (6.71,3.71) -- cycle;
\draw [fill=black, opacity=0.1] (6.42,6.42) -- (7.13,7.13) -- (8.129999999999999,7.13) -- (7.42,6.42) -- cycle;
\draw [fill=black, opacity=0.1] (6.0,1.0) -- (6.71,1.71) -- (7.71,1.71) -- (7.0,1.0) -- cycle;
\draw [fill=black, opacity=0.1] (6.71,3.71) -- (7.42,4.42) -- (8.42,4.42) -- (7.71,3.71) -- cycle;
\draw [fill=black, opacity=0.1] (7.42,6.42) -- (8.129999999999999,7.13) -- (9.129999999999999,7.13) -- (8.42,6.42) -- cycle;
\draw [fill=black, opacity=0.1] (7.0,0.0) -- (7.71,0.71) -- (8.71,0.71) -- (8.0,0.0) -- cycle;
\draw [fill=black, opacity=0.1] (7.71,3.71) -- (8.42,4.42) -- (9.42,4.42) -- (8.71,3.71) -- cycle;
\draw [fill=black, opacity=0.1] (8.42,4.42) -- (9.129999999999999,5.13) -- (10.129999999999999,5.13) -- (9.42,4.42) -- cycle;
\draw [fill=black, opacity=0.1] (8.0,0.0) -- (8.71,0.71) -- (9.71,0.71) -- (9.0,0.0) -- cycle;
\draw [fill=black, opacity=0.1] (8.71,1.71) -- (9.420000000000002,2.42) -- (10.420000000000002,2.42) -- (9.71,1.71) -- cycle;
\draw [fill=black, opacity=0.1] (9.42,3.42) -- (10.129999999999999,4.13) -- (11.129999999999999,4.13) -- (10.42,3.42) -- cycle;
\end{tikzpicture}}
&
\scalebox{0.4}{
\begin{tikzpicture}[scale=1.0,yscale=-1]
\draw [] (5.0,0.0) rectangle (6.0,1.0); \node[anchor=center,scale=1.3] (nn0x5) at (5.50,0.50) {$0$};
\draw [] (6.0,0.0) rectangle (7.0,1.0); \node[anchor=center,scale=1.3] (nn0x6) at (6.50,0.50) {$0$};
\draw [fill=black, opacity=0.2] (7.0,0.0) -- (7.71,0.71) -- (7.71,1.71) -- (7.0,1.0) -- cycle;
\draw [] (7.71,0.71) rectangle (8.71,1.71); \node[anchor=center,scale=1.3] (nn0x7) at (8.21,1.21) {$1$};
\draw [] (8.71,0.71) rectangle (9.71,1.71); \node[anchor=center,scale=1.3] (nn0x8) at (9.21,1.21) {$1$};
\draw [fill=black, opacity=0.2] (9.71,0.71) -- (10.420000000000002,1.42) -- (10.420000000000002,2.42) -- (9.71,1.71) -- cycle;
\draw [fill=black, opacity=0.2] (10.42,1.42) -- (11.129999999999999,2.13) -- (11.129999999999999,3.13) -- (10.42,2.42) -- cycle;
\draw [fill=black, opacity=0.2] (4.0,1.0) -- (4.71,1.71) -- (4.71,2.71) -- (4.0,2.0) -- cycle;
\draw [] (4.71,1.71) rectangle (5.71,2.71); \node[anchor=center,scale=1.3] (nn1x4) at (5.21,2.21) {$1$};
\draw [] (5.71,1.71) rectangle (6.71,2.71); \node[anchor=center,scale=1.3] (nn1x5) at (6.21,2.21) {$1$};
\draw [] (6.71,1.71) rectangle (7.71,2.71); \node[anchor=center,scale=1.3] (nn1x6) at (7.21,2.21) {$1$};
\draw [] (7.71,1.71) rectangle (8.71,2.71); \node[anchor=center,scale=1.3] (nn1x7) at (8.21,2.21) {$1$};
\draw [fill=black, opacity=0.2] (8.71,1.71) -- (9.420000000000002,2.42) -- (9.420000000000002,3.42) -- (8.71,2.71) -- cycle;
\draw [] (9.42,2.42) rectangle (10.42,3.42); \node[anchor=center,scale=1.3] (nn1x8) at (9.92,2.92) {$2$};
\draw [fill=black, opacity=0.2] (10.42,2.42) -- (11.129999999999999,3.13) -- (11.129999999999999,4.13) -- (10.42,3.42) -- cycle;
\draw [] (3.0,2.0) rectangle (4.0,3.0); \node[anchor=center,scale=1.3] (nn2x3) at (3.50,2.50) {$0$};
\draw [fill=black, opacity=0.2] (4.0,2.0) -- (4.71,2.71) -- (4.71,3.71) -- (4.0,3.0) -- cycle;
\draw [] (4.71,2.71) rectangle (5.71,3.71); \node[anchor=center,scale=1.3] (nn2x4) at (5.21,3.21) {$1$};
\draw [] (5.71,2.71) rectangle (6.71,3.71); \node[anchor=center,scale=1.3] (nn2x5) at (6.21,3.21) {$1$};
\draw [] (6.71,2.71) rectangle (7.71,3.71); \node[anchor=center,scale=1.3] (nn2x6) at (7.21,3.21) {$1$};
\draw [] (7.71,2.71) rectangle (8.71,3.71); \node[anchor=center,scale=1.3] (nn2x7) at (8.21,3.21) {$1$};
\draw [fill=black, opacity=0.2] (8.71,2.71) -- (9.420000000000002,3.42) -- (9.420000000000002,4.42) -- (8.71,3.71) -- cycle;
\draw [fill=black, opacity=0.2] (9.42,3.42) -- (10.129999999999999,4.13) -- (10.129999999999999,5.13) -- (9.42,4.42) -- cycle;
\draw [] (10.129999999999999,4.13) rectangle (11.129999999999999,5.13); \node[anchor=center,scale=1.3] (nn2x8) at (10.63,4.63) {$3$};
\draw [] (2.0,3.0) rectangle (3.0,4.0); \node[anchor=center,scale=1.3] (nn3x2) at (2.50,3.50) {$0$};
\draw [] (3.0,3.0) rectangle (4.0,4.0); \node[anchor=center,scale=1.3] (nn3x3) at (3.50,3.50) {$0$};
\draw [fill=black, opacity=0.2] (4.0,3.0) -- (4.71,3.71) -- (4.71,4.71) -- (4.0,4.0) -- cycle;
\draw [] (4.71,3.71) rectangle (5.71,4.71); \node[anchor=center,scale=1.3] (nn3x4) at (5.21,4.21) {$1$};
\draw [fill=black, opacity=0.2] (5.71,3.71) -- (6.42,4.42) -- (6.42,5.42) -- (5.71,4.71) -- cycle;
\draw [] (6.42,4.42) rectangle (7.42,5.42); \node[anchor=center,scale=1.3] (nn3x5) at (6.92,4.92) {$2$};
\draw [] (7.42,4.42) rectangle (8.42,5.42); \node[anchor=center,scale=1.3] (nn3x6) at (7.92,4.92) {$2$};
\draw [fill=black, opacity=0.2] (8.42,4.42) -- (9.129999999999999,5.13) -- (9.129999999999999,6.13) -- (8.42,5.42) -- cycle;
\draw [] (9.129999999999999,5.13) rectangle (10.129999999999999,6.13); \node[anchor=center,scale=1.3] (nn3x7) at (9.63,5.63) {$3$};
\draw [] (1.0,4.0) rectangle (2.0,5.0); \node[anchor=center,scale=1.3] (nn4x1) at (1.50,4.50) {$0$};
\draw [] (2.0,4.0) rectangle (3.0,5.0); \node[anchor=center,scale=1.3] (nn4x2) at (2.50,4.50) {$0$};
\draw [] (3.0,4.0) rectangle (4.0,5.0); \node[anchor=center,scale=1.3] (nn4x3) at (3.50,4.50) {$0$};
\draw [fill=black, opacity=0.2] (4.0,4.0) -- (4.71,4.71) -- (4.71,5.71) -- (4.0,5.0) -- cycle;
\draw [] (4.71,4.71) rectangle (5.71,5.71); \node[anchor=center,scale=1.3] (nn4x4) at (5.21,5.21) {$1$};
\draw [fill=black, opacity=0.2] (5.71,4.71) -- (6.42,5.42) -- (6.42,6.42) -- (5.71,5.71) -- cycle;
\draw [] (6.42,5.42) rectangle (7.42,6.42); \node[anchor=center,scale=1.3] (nn4x5) at (6.92,5.92) {$2$};
\draw [] (7.42,5.42) rectangle (8.42,6.42); \node[anchor=center,scale=1.3] (nn4x6) at (7.92,5.92) {$2$};
\draw [fill=black, opacity=0.2] (8.42,5.42) -- (9.129999999999999,6.13) -- (9.129999999999999,7.13) -- (8.42,6.42) -- cycle;
\draw [fill=black, opacity=0.2] (0.0,5.0) -- (0.71,5.71) -- (0.71,6.71) -- (0.0,6.0) -- cycle;
\draw [] (0.71,5.71) rectangle (1.71,6.71); \node[anchor=center,scale=1.3] (nn5x0) at (1.21,6.21) {$1$};
\draw [] (1.71,5.71) rectangle (2.71,6.71); \node[anchor=center,scale=1.3] (nn5x1) at (2.21,6.21) {$1$};
\draw [] (2.71,5.71) rectangle (3.71,6.71); \node[anchor=center,scale=1.3] (nn5x2) at (3.21,6.21) {$1$};
\draw [] (3.71,5.71) rectangle (4.71,6.71); \node[anchor=center,scale=1.3] (nn5x3) at (4.21,6.21) {$1$};
\draw [] (4.71,5.71) rectangle (5.71,6.71); \node[anchor=center,scale=1.3] (nn5x4) at (5.21,6.21) {$1$};
\draw [fill=black, opacity=0.2] (5.71,5.71) -- (6.42,6.42) -- (6.42,7.42) -- (5.71,6.71) -- cycle;
\draw [fill=black, opacity=0.2] (6.42,6.42) -- (7.13,7.13) -- (7.13,8.129999999999999) -- (6.42,7.42) -- cycle;
\draw [] (7.13,7.13) rectangle (8.129999999999999,8.129999999999999); \node[anchor=center,scale=1.3] (nn5x5) at (7.63,7.63) {$3$};
\draw [fill=black, opacity=0.2] (0.0,6.0) -- (0.71,6.71) -- (0.71,7.71) -- (0.0,7.0) -- cycle;
\draw [] (0.71,6.71) rectangle (1.71,7.71); \node[anchor=center,scale=1.3] (nn6x0) at (1.21,7.21) {$1$};
\draw [] (1.71,6.71) rectangle (2.71,7.71); \node[anchor=center,scale=1.3] (nn6x1) at (2.21,7.21) {$1$};
\draw [] (2.71,6.71) rectangle (3.71,7.71); \node[anchor=center,scale=1.3] (nn6x2) at (3.21,7.21) {$1$};
\draw [fill=black, opacity=0.2] (3.71,6.71) -- (4.42,7.42) -- (4.42,8.42) -- (3.71,7.71) -- cycle;
\draw [fill=black, opacity=0.2] (4.42,7.42) -- (5.13,8.129999999999999) -- (5.13,9.129999999999999) -- (4.42,8.42) -- cycle;
\draw [] (5.13,8.129999999999999) rectangle (6.13,9.129999999999999); \node[anchor=center,scale=1.3] (nn6x3) at (5.63,8.63) {$3$};
\draw [] (6.13,8.129999999999999) rectangle (7.13,9.129999999999999); \node[anchor=center,scale=1.3] (nn6x4) at (6.63,8.63) {$3$};
\draw [fill=black, opacity=0.2] (0.0,7.0) -- (0.71,7.71) -- (0.71,8.71) -- (0.0,8.0) -- cycle;
\draw [] (0.71,7.71) rectangle (1.71,8.71); \node[anchor=center,scale=1.3] (nn7x0) at (1.21,8.21) {$1$};
\draw [] (1.71,7.71) rectangle (2.71,8.71); \node[anchor=center,scale=1.3] (nn7x1) at (2.21,8.21) {$1$};
\draw [] (2.71,7.71) rectangle (3.71,8.71); \node[anchor=center,scale=1.3] (nn7x2) at (3.21,8.21) {$1$};
\draw [fill=black, opacity=0.2] (3.71,7.71) -- (4.42,8.42) -- (4.42,9.42) -- (3.71,8.71) -- cycle;
\draw [fill=black, opacity=0.2] (4.42,8.42) -- (5.13,9.129999999999999) -- (5.13,10.129999999999999) -- (4.42,9.42) -- cycle;
\draw [] (5.13,9.129999999999999) rectangle (6.13,10.129999999999999); \node[anchor=center,scale=1.3] (nn7x3) at (5.63,9.63) {$3$};
\draw [fill=black, opacity=0.2] (0.0,8.0) -- (0.71,8.71) -- (0.71,9.71) -- (0.0,9.0) -- cycle;
\draw [] (0.71,8.71) rectangle (1.71,9.71); \node[anchor=center,scale=1.3] (nn8x0) at (1.21,9.21) {$1$};
\draw [] (1.71,8.71) rectangle (2.71,9.71); \node[anchor=center,scale=1.3] (nn8x1) at (2.21,9.21) {$1$};
\draw [] (2.71,8.71) rectangle (3.71,9.71); \node[anchor=center,scale=1.3] (nn8x2) at (3.21,9.21) {$1$};
\draw [fill=black, opacity=0.2] (3.71,8.71) -- (4.42,9.420000000000002) -- (4.42,10.420000000000002) -- (3.71,9.71) -- cycle;
\draw [fill=black, opacity=0.2] (4.42,9.42) -- (5.13,10.129999999999999) -- (5.13,11.129999999999999) -- (4.42,10.42) -- cycle;
\draw [fill=black, opacity=0.2] (0.0,9.0) -- (0.71,9.71) -- (0.71,10.71) -- (0.0,10.0) -- cycle;
\draw [] (0.71,9.71) rectangle (1.71,10.71); \node[anchor=center,scale=1.3] (nn9x0) at (1.21,10.21) {$1$};
\draw [] (1.71,9.71) rectangle (2.71,10.71); \node[anchor=center,scale=1.3] (nn9x1) at (2.21,10.21) {$1$};
\draw [fill=black, opacity=0.2] (2.71,9.71) -- (3.42,10.420000000000002) -- (3.42,11.420000000000002) -- (2.71,10.71) -- cycle;
\draw [fill=black, opacity=0.2] (3.42,10.42) -- (4.13,11.129999999999999) -- (4.13,12.129999999999999) -- (3.42,11.42) -- cycle;
\draw [fill=black, opacity=0.1] (0.0,5.0) -- (0.71,5.71) -- (1.71,5.71) -- (1.0,5.0) -- cycle;
\draw [fill=black, opacity=0.1] (0.71,10.71) -- (1.42,11.420000000000002) -- (2.42,11.420000000000002) -- (1.71,10.71) -- cycle;
\draw [fill=black, opacity=0.1] (1.42,11.42) -- (2.13,12.129999999999999) -- (3.13,12.129999999999999) -- (2.42,11.42) -- cycle;
\draw [fill=black, opacity=0.1] (1.0,5.0) -- (1.71,5.71) -- (2.71,5.71) -- (2.0,5.0) -- cycle;
\draw [fill=black, opacity=0.1] (1.71,10.71) -- (2.42,11.420000000000002) -- (3.42,11.420000000000002) -- (2.71,10.71) -- cycle;
\draw [fill=black, opacity=0.1] (2.42,11.42) -- (3.13,12.129999999999999) -- (4.13,12.129999999999999) -- (3.42,11.42) -- cycle;
\draw [fill=black, opacity=0.1] (2.0,5.0) -- (2.71,5.71) -- (3.71,5.71) -- (3.0,5.0) -- cycle;
\draw [fill=black, opacity=0.1] (2.71,9.71) -- (3.42,10.420000000000002) -- (4.42,10.420000000000002) -- (3.71,9.71) -- cycle;
\draw [fill=black, opacity=0.1] (3.42,10.42) -- (4.13,11.129999999999999) -- (5.13,11.129999999999999) -- (4.42,10.42) -- cycle;
\draw [fill=black, opacity=0.1] (3.0,5.0) -- (3.71,5.71) -- (4.71,5.71) -- (4.0,5.0) -- cycle;
\draw [fill=black, opacity=0.1] (3.71,6.71) -- (4.42,7.42) -- (5.42,7.42) -- (4.71,6.71) -- cycle;
\draw [fill=black, opacity=0.1] (4.42,7.42) -- (5.13,8.129999999999999) -- (6.13,8.129999999999999) -- (5.42,7.42) -- cycle;
\draw [fill=black, opacity=0.1] (4.0,1.0) -- (4.71,1.71) -- (5.71,1.71) -- (5.0,1.0) -- cycle;
\draw [fill=black, opacity=0.1] (4.71,6.71) -- (5.42,7.42) -- (6.42,7.42) -- (5.71,6.71) -- cycle;
\draw [fill=black, opacity=0.1] (5.42,7.42) -- (6.13,8.129999999999999) -- (7.13,8.129999999999999) -- (6.42,7.42) -- cycle;
\draw [fill=black, opacity=0.1] (5.0,1.0) -- (5.71,1.71) -- (6.71,1.71) -- (6.0,1.0) -- cycle;
\draw [fill=black, opacity=0.1] (5.71,3.71) -- (6.42,4.42) -- (7.42,4.42) -- (6.71,3.71) -- cycle;
\draw [fill=black, opacity=0.1] (6.42,6.42) -- (7.13,7.13) -- (8.129999999999999,7.13) -- (7.42,6.42) -- cycle;
\draw [fill=black, opacity=0.1] (6.0,1.0) -- (6.71,1.71) -- (7.71,1.71) -- (7.0,1.0) -- cycle;
\draw [fill=black, opacity=0.1] (6.71,3.71) -- (7.42,4.42) -- (8.42,4.42) -- (7.71,3.71) -- cycle;
\draw [fill=black, opacity=0.1] (7.42,6.42) -- (8.129999999999999,7.13) -- (9.129999999999999,7.13) -- (8.42,6.42) -- cycle;
\draw [fill=black, opacity=0.1] (7.0,0.0) -- (7.71,0.71) -- (8.71,0.71) -- (8.0,0.0) -- cycle;
\draw [fill=black, opacity=0.1] (7.71,3.71) -- (8.42,4.42) -- (9.42,4.42) -- (8.71,3.71) -- cycle;
\draw [fill=black, opacity=0.1] (8.42,4.42) -- (9.129999999999999,5.13) -- (10.129999999999999,5.13) -- (9.42,4.42) -- cycle;
\draw [fill=black, opacity=0.1] (8.0,0.0) -- (8.71,0.71) -- (9.71,0.71) -- (9.0,0.0) -- cycle;
\draw [fill=black, opacity=0.1] (8.71,1.71) -- (9.420000000000002,2.42) -- (10.420000000000002,2.42) -- (9.71,1.71) -- cycle;
\draw [fill=black, opacity=0.1] (9.42,3.42) -- (10.129999999999999,4.13) -- (11.129999999999999,4.13) -- (10.42,3.42) -- cycle;
\tikzset{myptr/.style={decoration={markings,mark=at position 1 with %
    {\arrow[scale=1.5,>=stealth]{>}}},postaction={decorate}}}
\draw[line width=1pt, red,myptr] (7.36,0.36) -- (7.36,1.36);
\draw[line width=1pt, red,myptr] (10.07,1.07) -- (10.07,2.07);
\draw[line width=1pt, red,myptr] (10.78,1.78) -- (10.78,2.78);
\draw[line width=1pt, red,myptr] (4.36,1.36) -- (4.36,2.36);
\draw[line width=1pt, red,myptr] (9.07,2.07) -- (9.07,3.07);
\draw[line width=1pt, red,myptr] (10.78,2.78) -- (10.78,3.78);
\draw[line width=1pt, red,myptr] (4.36,2.36) -- (4.36,3.36);
\draw[line width=1pt, red,myptr] (9.07,3.07) -- (9.07,4.06);
\draw[line width=1pt, red,myptr] (9.78,3.78) -- (9.78,4.78);
\draw[line width=1pt, red,myptr] (4.36,3.36) -- (4.36,4.36);
\draw[line width=1pt, red,myptr] (6.06,4.06) -- (6.06,5.06);
\draw[line width=1pt, red,myptr] (8.78,4.78) -- (8.78,5.78);
\draw[line width=1pt, red,myptr] (4.36,4.36) -- (4.36,5.36);
\draw[line width=1pt, red,myptr] (6.06,5.06) -- (6.06,6.06);
\draw[line width=1pt, red,myptr] (8.78,5.78) -- (8.78,6.78);
\draw[line width=1pt, red,myptr] (0.36,5.36) -- (0.36,6.36);
\draw[line width=1pt, red,myptr] (6.06,6.06) -- (6.06,7.06);
\draw[line width=1pt, red,myptr] (6.78,6.78) -- (6.78,7.78);
\draw[line width=1pt, red,myptr] (0.36,6.36) -- (0.36,7.36);
\draw[line width=1pt, red,myptr] (4.06,7.06) -- (4.06,8.07);
\draw[line width=1pt, red,myptr] (4.78,7.78) -- (4.78,8.78);
\draw[line width=1pt, red,myptr] (0.36,7.36) -- (0.36,8.36);
\draw[line width=1pt, red,myptr] (4.06,8.07) -- (4.06,9.07);
\draw[line width=1pt, red,myptr] (4.78,8.78) -- (4.78,9.78);
\draw[line width=1pt, red,myptr] (0.36,8.36) -- (0.36,9.36);
\draw[line width=1pt, red,myptr] (4.06,9.07) -- (4.06,10.07);
\draw[line width=1pt, red,myptr] (4.78,9.78) -- (4.78,10.78);
\draw[line width=1pt, red,myptr] (0.36,9.36) -- (0.36,10.36);
\draw[line width=1pt, red,myptr] (3.07,10.07) -- (3.07,11.07);
\draw[line width=1pt, red,myptr] (3.78,10.78) -- (3.78,11.78);
\draw[line width=1pt, red,myptr] (1.36,5.36) -- (0.36,5.36);
\draw[line width=1pt, red,myptr] (2.07,11.07) -- (1.07,11.07);
\draw[line width=1pt, red,myptr] (2.78,11.78) -- (1.78,11.78);
\draw[line width=1pt, red,myptr] (2.36,5.36) -- (1.36,5.36);
\draw[line width=1pt, red,myptr] (3.07,11.07) -- (2.07,11.07);
\draw[line width=1pt, red,myptr] (3.78,11.78) -- (2.78,11.78);
\draw[line width=1pt, red,myptr] (3.36,5.36) -- (2.36,5.36);
\draw[line width=1pt, red,myptr] (4.06,10.07) -- (3.07,10.07);
\draw[line width=1pt, red,myptr] (4.78,10.78) -- (3.78,10.78);
\draw[line width=1pt, red,myptr] (4.36,5.36) -- (3.36,5.36);
\draw[line width=1pt, red,myptr] (5.06,7.06) -- (4.06,7.06);
\draw[line width=1pt, red,myptr] (5.78,7.78) -- (4.78,7.78);
\draw[line width=1pt, red,myptr] (5.36,1.36) -- (4.36,1.36);
\draw[line width=1pt, red,myptr] (6.06,7.06) -- (5.06,7.06);
\draw[line width=1pt, red,myptr] (6.78,7.78) -- (5.78,7.78);
\draw[line width=1pt, red,myptr] (6.36,1.36) -- (5.36,1.36);
\draw[line width=1pt, red,myptr] (7.06,4.06) -- (6.06,4.06);
\draw[line width=1pt, red,myptr] (7.78,6.78) -- (6.78,6.78);
\draw[line width=1pt, red,myptr] (7.36,1.36) -- (6.36,1.36);
\draw[line width=1pt, red,myptr] (8.07,4.06) -- (7.06,4.06);
\draw[line width=1pt, red,myptr] (8.78,6.78) -- (7.78,6.78);
\draw[line width=1pt, red,myptr] (8.36,0.36) -- (7.36,0.36);
\draw[line width=1pt, red,myptr] (9.07,4.06) -- (8.07,4.06);
\draw[line width=1pt, red,myptr] (9.78,4.78) -- (8.78,4.78);
\draw[line width=1pt, red,myptr] (9.36,0.36) -- (8.36,0.36);
\draw[line width=1pt, red,myptr] (10.07,2.07) -- (9.07,2.07);
\draw[line width=1pt, red,myptr] (10.78,3.78) -- (9.78,3.78);
\end{tikzpicture}}
\\

(c)
&
(d)
\\

\end{tabular}

\caption{\label{fig:lozenges} (a) The skew shape $\sk=\sk(a,b,c,d,\ell)$ for $a=4,b=3,c=2,d=5,$ and $\ell=5$. (b) A weak reverse plane partition $\pi$ of shape $\sk$ with $r=3$. (c) A lozenge tiling corresponding to $\pi$.  (d) An $r$-path $\bij(\pi)$ in $\Nt_m$ corresponding to $\pi$.}
\end{figure}

Fix four integers $a,b,c,d\geq 1$ such that $a+b=c+d$. Then for each $\ell\geq 0$, define $\sk(a,b,c,d,\ell)$ to be the \emph{skew shape} depicted in Figure~\ref{fig:lozenges}~(a). Explicitly, we define $\sk(a,b,c,d,\ell)$ to be the difference of two Young diagrams $\sk=\Y(\l/\m)$ where 
\[\l=((\ell+a)^b,\ell+a-1,\ell+a-2,\dots,c),\quad \m=(\ell,\ell-1,\dots,1).\]
Here $(\ell+a)^b$ denotes the number $\ell+a$ repeated $b$ times. 

Fix some integer $r\geq 1$.
\begin{definition}
Given a skew shape $\sksh$, a \emph{(weak) reverse plane partition of shape} $\sksh$ is a filling $\pi$ of the boxes of $\Y(\sksh)$ with integers from $0$ to $r$ such that the numbers increase weakly along every row and column of $\sksh$. Define $|\pi|$ to be the sum of values of $\pi$.
\end{definition}

An example of a weak reverse plane partition $\pi$ of shape $\sk(a,b,c,d,\ell)$ is shown in Figure~\ref{fig:lozenges}~(b).  We have 
\[|\pi|=8\times0+28\times1+5\times2+6\times 3=56.\]
Let $q$ be an indeterminate. For each $\ell\geq 0$, we define
\begin{equation}\label{eq:f_lozenge}
f(\ell)=\sum_\pi q^{|\pi|},
\end{equation}
where the sum is taken over all weak reverse plane partitions of shape $\sk(a,b,c,d,\ell)$ with values from $0$ to $r$.

Let us now put $m=a+b+r$. For example, in the case of Figure~\ref{fig:lozenges}, we have $m=4+3+3=10$. For each $m\geq 2$, we introduce a planar cylindrical network $\Nt_m$ as follows. We put $\g=(-1,1)$, which is not a horizontal vector but we can rotate the whole picture by a $45$ degree angle. Next, the vertex set $\Vt$ of $\Nt_m$ consists of all points $(i,j)\in\R^2$ with integer coordinates satisfying $0\leq j-i\leq m-1$. If two vertices $(i,j)$ and $(i+1,j)$ both belong to $\Vt$ then $\Nt_m$ contains an edge $(i+1,j)\to(i,j)$ with weight $1$. If two vertices $(i,j)$ and $(i,j+1)$ both belong to $\Vt$ then $\Nt_m$ contains an edge $(i,j+1)\to (i,j)$ with weight $q^{j-i}$. Thus the weights of edges in $\Nt_m$ range from $q^0=1$ to $q^{m-2}$. An example of the network $\Nt_m$ for $m=4$ is shown in Figure~\ref{fig:Nt_m}.

\begin{figure}

\scalebox{0.4}{
\begin{tikzpicture}[scale=3.0]
\tikzset{myptr/.style={decoration={markings,mark=at position 1 with %
    {\arrow[scale=3,>=stealth]{>}}},postaction={decorate}}}
\node[draw=none] (n4x6) at (4.00,6.00) {};
\node[draw,circle,scale=0.8] (n4x7) at (4.00,7.00) {};
\node[draw=none] (n5x5) at (5.00,5.00) {};
\node[draw,circle,scale=0.8] (n5x6) at (5.00,6.00) {};
\node[draw,circle,scale=0.8] (n5x7) at (5.00,7.00) {};
\node[draw,circle,scale=0.8] (n5x8) at (5.00,8.00) {};
\node[draw,circle,scale=0.8] (n6x6) at (6.00,6.00) {};
\node[draw,circle,scale=0.8] (n6x7) at (6.00,7.00) {};
\node[draw,circle,scale=0.8] (n6x8) at (6.00,8.00) {};
\node[draw,circle,scale=0.8] (n6x9) at (6.00,9.00) {};
\node[draw,circle,scale=0.8] (n7x7) at (7.00,7.00) {};
\node[draw,circle,scale=0.8] (n7x8) at (7.00,8.00) {};
\node[draw,circle,scale=0.8] (n7x9) at (7.00,9.00) {};
\node[draw,circle,scale=0.8] (n7x10) at (7.00,10.00) {};
\node[draw,circle,scale=0.8] (n8x8) at (8.00,8.00) {};
\node[draw,circle,scale=0.8] (n8x9) at (8.00,9.00) {};
\node[draw,circle,scale=0.8] (n8x10) at (8.00,10.00) {};
\node[draw,circle,scale=0.8] (n8x11) at (8.00,11.00) {};
\node[draw,circle,scale=0.8] (n9x9) at (9.00,9.00) {};
\node[draw,circle,scale=0.8] (n9x10) at (9.00,10.00) {};
\node[draw=none] (n9x11) at (9.00,11.00) {};
\node[draw=none] (n10x10) at (10.00,10.00) {};
\draw[myptr] (n4x7) to node[midway,right,scale=2] {$q^2$} (n4x6);
\draw[myptr] (n5x6) to node[midway,above,scale=2] {$1$} (n4x6);
\draw[myptr] (n5x7) to node[midway,above,scale=2] {$1$} (n4x7);
\draw[myptr] (n5x6) to node[midway,right,scale=2] {$q^0$} (n5x5);
\draw[myptr] (n5x7) to node[midway,right,scale=2] {$q^1$} (n5x6);
\draw[myptr] (n6x6) to node[midway,above,scale=2] {$1$} (n5x6);
\draw[myptr] (n5x8) to node[midway,right,scale=2] {$q^2$} (n5x7);
\draw[myptr] (n6x7) to node[midway,above,scale=2] {$1$} (n5x7);
\draw[myptr] (n6x8) to node[midway,above,scale=2] {$1$} (n5x8);
\draw[myptr] (n6x7) to node[midway,right,scale=2] {$q^0$} (n6x6);
\draw[myptr] (n6x8) to node[midway,right,scale=2] {$q^1$} (n6x7);
\draw[myptr] (n7x7) to node[midway,above,scale=2] {$1$} (n6x7);
\draw[myptr] (n6x9) to node[midway,right,scale=2] {$q^2$} (n6x8);
\draw[myptr] (n7x8) to node[midway,above,scale=2] {$1$} (n6x8);
\draw[myptr] (n7x9) to node[midway,above,scale=2] {$1$} (n6x9);
\draw[myptr] (n7x8) to node[midway,right,scale=2] {$q^0$} (n7x7);
\draw[myptr] (n7x9) to node[midway,right,scale=2] {$q^1$} (n7x8);
\draw[myptr] (n8x8) to node[midway,above,scale=2] {$1$} (n7x8);
\draw[myptr] (n7x10) to node[midway,right,scale=2] {$q^2$} (n7x9);
\draw[myptr] (n8x9) to node[midway,above,scale=2] {$1$} (n7x9);
\draw[myptr] (n8x10) to node[midway,above,scale=2] {$1$} (n7x10);
\draw[myptr] (n8x9) to node[midway,right,scale=2] {$q^0$} (n8x8);
\draw[myptr] (n8x10) to node[midway,right,scale=2] {$q^1$} (n8x9);
\draw[myptr] (n9x9) to node[midway,above,scale=2] {$1$} (n8x9);
\draw[myptr] (n8x11) to node[midway,right,scale=2] {$q^2$} (n8x10);
\draw[myptr] (n9x10) to node[midway,above,scale=2] {$1$} (n8x10);
\draw[myptr] (n9x11) to node[midway,above,scale=2] {$1$} (n8x11);
\draw[myptr] (n9x10) to node[midway,right,scale=2] {$q^0$} (n9x9);
\draw[myptr] (n9x11) to node[midway,right,scale=2] {$q^1$} (n9x10);
\draw[myptr] (n10x10) to node[midway,above,scale=2] {$1$} (n9x10);
\node[scale=3,anchor=south west] (dotsUp) at (9.25,10.75) {$\iddots$};
\node[scale=3,anchor=north east] (dotsDown) at (4.00,6.00) {$\iddots$};
\draw[line width=1pt,myptr] (8.20,7.80) to node[midway,below,scale=3] {$\g$} (7.20,6.80);
\end{tikzpicture}}

	\caption{\label{fig:Nt_m}The cylindrical network $\Nt_m$ for $m=4$.}
\end{figure}
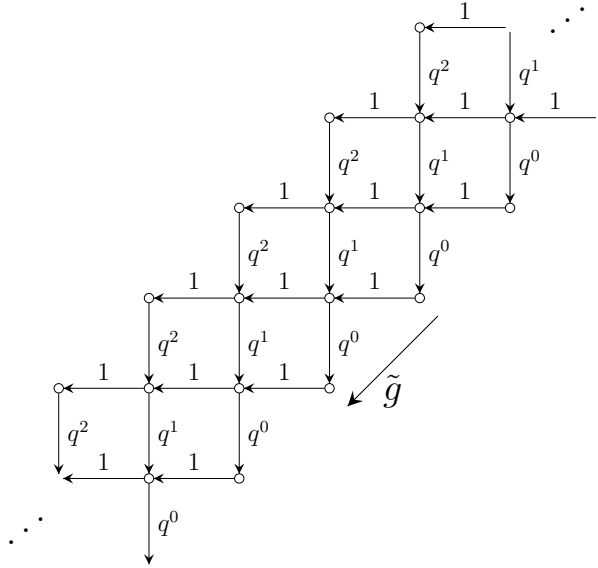

\begin{proposition}
	For each $a,b,c,d,r$, there exist two $r$-vertices $\ubft,\vbft$ in $\Nt_m$ such that for any $\ell\geq 1$, there is a bijection $\pi\to\bij(\pi)$ from the set of all weak reverse plane partitions of shape $\sk(a,b,c,d,\ell)$ with values from $0$ to $r$ to the set $\pathst(\ubft,\vbft+\ell\g)$ in $\Nt_m$. Moreover, there exist two integers $\a,\b$ depending on $a,b,c,d,r$ such that for all $\ell$ we have 
	\begin{equation}\label{eq:q}
	q^{|\pi|}=\wt(\bij(\pi)) q^\a q^{\ell\b}
	\end{equation}
	for any weak reverse plane partition $\pi$ of shape $\sk(a,b,c,d,\ell)$.
\end{proposition}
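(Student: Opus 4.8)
The plan is to use the classical correspondence between reverse plane partitions and families of non-crossing contour lines, and then to transport these contours into vertex-disjoint paths in $\Nt_m$.

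\emph{The bijection.} Given a weak reverse plane partition $\pi$ of shape $\sk(a,b,c,d,\ell)$ with values in $\{0,1,\dots,r\}$, for each $i\in\{1,\dots,r\}$ let $S_i$ be the set of boxes $c$ of $\Y(\sk(a,b,c,d,\ell))$ with $\pi(c)\geq i$. Since $\pi$ increases weakly along every row and column, each $S_i$ is closed under moving down and to the right, and the $S_i$ are nested: $S_1\supseteq S_2\supseteq\dots\supseteq S_r$. The boundary of $S_i$ inside the skew shape is a monotone lattice path $L_i$, and the nesting of the $S_i$ guarantees that the $L_i$ are pairwise non-crossing. Rotating by $45^\circ$ as in the definition of $\Nt_m$, each unit step of $L_i$ corresponds to a single edge of $\Nt_m$, so $L_i$ becomes a directed path $\Pt_i$ in $\Nt_m$; this is the picture in Figure~\ref{fig:lozenges}(d). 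Because the strip defining $\Nt_m$ has width $m=a+b+r$, the non-crossing paths $\Pt_1,\dots,\Pt_r$ are in fact pairwise vertex-disjoint, so that $\bij(\pi):=(\Pt_1,\dots,\Pt_r)$ is a genuine $r$-path. The assignment is invertible: an $r$-path in $\Nt_m$ with the prescribed endpoints recovers a nested family of regions $S_1\supseteq\dots\supseteq S_r$, hence a unique $\pi$.

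\emph{Identifying the endpoints.} The outer and inner boundaries of $\sk(a,b,c,d,\ell)$ force each contour $L_i$ to begin and end at prescribed corners of the strip. Working out the explicit partitions $\l=((\ell+a)^b,\ell+a-1,\dots,c)$ and $\m=(\ell,\ell-1,\dots,1)$, one checks that the starting points of the $L_i$ are independent of $\ell$, while their ending points are translated by exactly $\ell\g$ as $\ell$ increases, reflecting the fact that $\sk(a,b,c,d,\ell)$ grows by one diagonal period of the strip when $\ell\mapsto\ell+1$. Taking $\ubft$ to be the fixed tuple of start vertices and $\vbft$ the tuple of end vertices at $\ell=0$, we obtain $\bij(\pi)\in\pathst(\ubft,\vbft+\ell\g)$, and every element of this set arises this way.

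\emph{Matching the weights.} By the layer-cake identity $\pi(c)=\sum_{i=1}^r\one[\pi(c)\geq i]$ we have $|\pi|=\sum_{i=1}^r|S_i|$. On the network side, a vertical edge of $\Nt_m$ from $(i,j+1)$ to $(i,j)$ carries weight $q^{j-i}$ while horizontal edges have weight $1$, so $\wt(\Pt_i)=q^{E_i}$, where $E_i$ is the sum of the diagonal coordinates $j-i$ over the down-steps of $\Pt_i$. A direct computation shows that $E_i$ equals $|S_i|$ up to an additive quantity that is affine in $\ell$: the down-steps of $\Pt_i$ record, diagonal by diagonal, the number of boxes of $S_i$, and the only discrepancy comes from the boxes lying below the common starting level (a fixed contribution) together with the linear growth of the shape in $\ell$. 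Summing over $i$ gives $|\pi|=\sum_i E_i+\a+\ell\b$ for integers $\a,\b$ depending only on $a,b,c,d,r$, and since $\wt(\bij(\pi))=\prod_i\wt(\Pt_i)=q^{\sum_i E_i}$, this is exactly the identity~\eqref{eq:q}.

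The contour-line bijection and the layer-cake identity are routine; the only genuine bookkeeping is to verify that the choice $m=a+b+r$ makes the non-crossing contours vertex-disjoint, and to pin down the affine constants $\a$ and $\b$ in the weight identity. I expect the latter computation of $\a$ and $\b$ to be where the main effort goes.
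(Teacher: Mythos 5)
Your overall route is the same as the paper's: the level curves $L_i$ of the nested sets $S_i=\{\pi\geq i\}$ are exactly the paths cut out by the lozenge tiling in Figure~\ref{fig:lozenges}(c)--(d), and your weight argument (layer-cake identity plus a per-level comparison of $E_i$ with $|S_i|$ up to affine-in-$\ell$ constants) is a legitimate reorganization of what the paper does by a different means, namely connectivity under the local move of Figure~\ref{fig:local_move} together with the base case of the all-zero plane partition. Both versions defer the same final bookkeeping of $\a$ and $\b$.

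However, there is a genuine gap at the step where you pass from non-crossing contours to an $r$-path. Taken literally, the boundaries $L_1,\dots,L_r$ of the nested sets $S_1\supseteq\dots\supseteq S_r$, rotated into the strip, are \emph{not} pairwise vertex-disjoint, and your stated reason (``because the strip defining $\Nt_m$ has width $m=a+b+r$'') is not a valid deduction. First, wherever $\pi$ jumps by two or more between adjacent boxes (in the extreme case of the all-$r$ filling, everywhere), several of the $S_i$ have coinciding boundaries, so the $L_i$ share edges and vertices; no choice of strip width can change this, since a rotation is a global isometry and cannot separate coinciding curves. Second, and worse, every contour $L_i$ runs from the same NE corner of $\sk(a,b,c,d,\ell)$ to the same SW corner, so all $r$ paths would share both endpoints, and $\ubft,\vbft$ would not even be $r$-vertices (tuples of \emph{distinct} vertices). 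The missing ingredient is the standard one that the figure silently builds in: the level-$i$ contour must additionally be translated by $i$ units in the diagonal direction (this is the shear from Figure~\ref{fig:lozenges}(b) to (c), i.e., the ``height'' in the stack-of-boxes picture). After this shift, weakly non-crossing contours become strictly vertex-disjoint with distinct starts independent of $\ell$ and ends moving by $\ell\g$; the role of $m=a+b+r$ is only to make room in the strip for the $r$ shifted copies, not to force disjointness. The shift also enters your weight computation (it adds $i$ times the number of down-steps to $E_i$), but since that contribution is affine in $\ell$ and independent of $\pi$, your layer-cake argument and the identity~\eqref{eq:q} survive once the shift is put in place.
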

\begin{proof}

As Figure~\ref{fig:lozenges}~(c) demonstrates, each weak reverse plane partition (with parts bounded by $r$) corresponds to a lozenge tiling of a certain planar region. We refer the reader to~\cite[Figure~1]{CLP} for an explicit description of this bijection. As one can see from Figure~\ref{fig:lozenges}~(d), each such lozenge tiling corresponds to a unique $r$-path in $\Nt_m$ for $m=a+b+r$. The endpoints of this path are precisely $\ubft$ and $\vbft+\ell\g$ for some $\ubft,\vbft$ that do not depend on $\ell$. Thus the only claim we need to prove is~\eqref{eq:q}. It is clear that all lozenge tilings are connected by the local move shown in Figure~\ref{fig:local_move}. Moreover, this local move increases the power of $q$ by exactly $1$ in both $\pi$ and $\bij(\pi)$. It suffices to analyze the image of the reverse plane partition $\pi_0$ whose all parts are equal to zero, and it is straightforward to check that the weight of $\bij(\pi_0)$ is of the form $q^\a q^{\ell\b}$ for some $\a,\b\in\Z$. This finishes the proof.
	\begin{figure}
	\begin{center}
	\begin{tabular}{ccccc}
		
	    \scalebox{0.7}{
	    \begin{tikzpicture}[scale=1.0,yscale=-1]
	    \draw [] (0.0,0.0) rectangle (1.0,1.0); \node[anchor=center,scale=1] (nn0x0) at (0.50,0.50) {$i$};
	    \draw [fill=black, opacity=0.2] (1.0,0.0) -- (1.71,0.71) -- (1.71,1.71) -- (1.0,1.0) -- cycle;
	    \draw [fill=black, opacity=0.1] (0.0,1.0) -- (0.71,1.71) -- (1.71,1.71) -- (1.0,1.0) -- cycle;
	    \tikzset{myptr/.style={decoration={markings,mark=at position 1 with %
		{\arrow[scale=1.5,>=stealth]{>}}},postaction={decorate}}}
	    \draw[line width=1pt, red,myptr] (1.36,0.36) -- (1.36,1.36);
	    \draw[line width=1pt, red,myptr] (1.36,1.36) -- (0.36,1.36);
	    \draw[<->,line width=1pt] (2.71,0.8) -- (3.71,0.8);
	   \begin{scope}[shift={(4.71,0)}]
	  \draw [fill=black, opacity=0.2] (0.0,0.0) -- (0.71,0.71) -- (0.71,1.71) -- (0.0,1.0) -- cycle;
	  \draw [] (0.71,0.71) rectangle (1.71,1.71); \node[anchor=center,scale=1] (nn0x0) at (1.21,1.21) {$i+1$};
	  \draw [fill=black, opacity=0.1] (0.0,0.0) -- (0.71,0.71) -- (1.71,0.71) -- (1.0,0.0) -- cycle;
	  \tikzset{myptr/.style={decoration={markings,mark=at position 1 with %
	      {\arrow[scale=1.5,>=stealth]{>}}},postaction={decorate}}}
	  \draw[line width=1pt, red,myptr] (0.36,0.36) -- (0.36,1.36);
	  \draw[line width=1pt, red,myptr] (1.36,0.36) -- (0.36,0.36);
	   \end{scope}
	  \end{tikzpicture}}
	\end{tabular}
      \end{center}
      \caption{\label{fig:local_move}A local move that connects any two lozenge tilings with each other.}
      \end{figure}
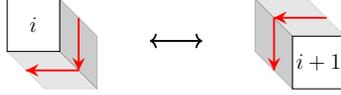
\end{proof}

Thus we can now apply Theorem~\ref{thm:recurrences} to the network $\Nt_m$ in order to obtain the main result of this section:

\begin{theorem}
	The sequence $f$ given by~\eqref{eq:f_lozenge} satisfies a linear recurrence with characteristic polynomial $Q_{N_m}^\plee{r}(tq^{\b})$.
\end{theorem}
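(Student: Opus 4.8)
The plan is to realize $f(\ell)$ as a constant multiple of a geometric sequence times the signed path sum that Theorem~\ref{thm:recurrences} already controls for $\Nt_m$, and then to understand how that geometric prefactor acts on the characteristic polynomial. First I would reduce $f$ to a single unsigned $r$-path count. The previous proposition supplies $r$-vertices $\ubft,\vbft$ in $\Nt_m$ and a bijection $\pi\mapsto\bij(\pi)$ onto $\pathst(\ubft,\vbft+\ell\g)$, so summing~\eqref{eq:q} over all weak reverse plane partitions $\pi$ of shape $\sk(a,b,c,d,\ell)$ gives
\[
f(\ell)=\sum_\pi q^{|\pi|}=q^{\a}(q^{\b})^{\ell}\sum_{\Pbft\in\pathst(\ubft,\vbft+\ell\g)}\wt(\Pbft)=q^{\a}(q^{\b})^{\ell}\,\ee(\ubft,\vbft+\ell\g).
\]
Exactly as in the Schur case (proof of Theorem~\ref{thm:Schur}), the pair $\ubft,\vbft$ is \emph{non-permutable}: since $\Nt_m$ is planar, the paths corresponding to a nonidentity permutation would have to cross and hence share a vertex, which is forbidden for an $r$-path. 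Thus in the Lindström--Gessel--Viennot expansion (Theorem~\ref{thm:LGV}) only $\sigma=\mathrm{id}$ contributes, so that
\[
g(\ell):=\sum_{\sigma\in\Sfr_r}\sgn(\sigma)\,\ee(\ubft,\sigma(\vbft+\ell\g))=\ee(\ubft,\vbft+\ell\g),
\]
and therefore $f(\ell)=q^{\a}(q^{\b})^{\ell}g(\ell)$.

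Next I would invoke the main theorem and then transfer the recurrence across the prefactor. The network $\Nt_m$ is planar cylindrical, hence local by Proposition~\ref{prop:local}, so part~\eqref{item:thm:plee} of Theorem~\ref{thm:recurrences} applies to $\ubft,\vbft$: the sequence $g$ satisfies, for all but finitely many $\ell$, a linear recurrence with characteristic polynomial $Q_{N_m}^{\plee{r}}(t)$. The scalar $q^{\a}$ is harmless, since rescaling a sequence preserves its characteristic polynomial. For the geometric factor I would use Proposition~\ref{prop:rational}: writing $\sum_{\ell\ge 0}g(\ell)t^{\ell}=P(t)/R(t)$ with $R(t)=t^{D}Q_{N_m}^{\plee{r}}(1/t)$ the palindromic image of $Q_{N_m}^{\plee{r}}$ (here $D=\binom{n}{r}$), I obtain
\[
\sum_{\ell\ge 0}f(\ell)\,t^{\ell}=q^{\a}\sum_{\ell\ge 0}g(\ell)(q^{\b}t)^{\ell}=q^{\a}\,\frac{P(q^{\b}t)}{R(q^{\b}t)},
\]
and, up to a unit in $K$, the denominator $R(q^{\b}t)$ is the palindromic image of the polynomial obtained from $Q_{N_m}^{\plee{r}}$ by rescaling each of its roots by the power of $q$ dictated by $\b$. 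Concretely, if $Q_{N_m}^{\plee{r}}(t)=\prod_{I}(t-\beta_I)$ with $\beta_I=(x_{i_1}\cdots x_{i_r})^{m}$, the geometric factor scales the $\beta_I$ accordingly, and Proposition~\ref{prop:rational} then yields precisely that $f$ satisfies a linear recurrence with characteristic polynomial $Q_{N_m}^{\plee{r}}(tq^{\b})$ for all but finitely many $\ell$.

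The two applications of Propositions~\ref{prop:rational} and~\ref{prop:local} and of Theorem~\ref{thm:recurrences} are routine. I expect the only genuinely delicate points to be: (i) confirming non-permutability of $\ubft,\vbft$ straight from the geometry of the lozenge-to-path bijection of Figure~\ref{fig:lozenges}, which is exactly what lets me discard the sign-sum and write $g(\ell)=\ee(\ubft,\vbft+\ell\g)$; and (ii) the exponent-and-sign bookkeeping for $\b$ in the substitution $t\mapsto tq^{\b}$, so that the palindromic image of $R(q^{\b}t)$ matches $Q_{N_m}^{\plee{r}}(tq^{\b})$ on the nose. Everything else follows formally from reducing the statement to the already-proved planar case of Theorem~\ref{thm:recurrences}.
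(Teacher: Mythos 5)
Your proof is correct and is exactly the paper's argument in expanded form: the paper's own proof of this theorem is the single sentence ``follows immediately from the previous proposition combined with the second part of Theorem~\ref{thm:recurrences}'', and the steps you spell out --- the reduction $f(\ell)=q^{\alpha}(q^{\beta})^{\ell}\,\ee(\ubft,\vbft+\ell\g)$ from \eqref{eq:q}, non-permutability so that only $\sigma=\mathrm{id}$ survives in \eqref{eq:LGV}, part~\eqref{item:thm:plee} of Theorem~\ref{thm:recurrences} applied via Proposition~\ref{prop:local}, and the transfer of the recurrence across the geometric prefactor by root-rescaling through Proposition~\ref{prop:rational} --- are precisely the details that ``immediately'' suppresses. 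Two minor blemishes, neither of which affects validity: the roots of $Q_{N_m}^\plee{r}(t)$ are not $(x_{i_1}\cdots x_{i_r})^m$ (you have imported these from the Schur network $N_{n,m}$ of Section~\ref{sect:Schur}, whereas in $\Nt_m$ the only indeterminate is $q$ and the roots are those of the Carlitz $q$-Fibonacci-type polynomial, their explicit form never being needed), and the sign of $\beta$ in the final substitution is convention-dependent in a way the paper itself leaves inconsistent --- with \eqref{eq:q} exactly as printed, multiplying by $(q^{\beta})^{\ell}$ rescales each root $\beta_I$ to $q^{\beta}\beta_I$ and hence yields a characteristic polynomial proportional to $Q_{N_m}^\plee{r}(tq^{-\beta})$, while the stated $Q_{N_m}^\plee{r}(tq^{\beta})$ matches the opposite convention used inside the paper's proof of the preceding proposition, so your ``on the nose'' claim inherits the paper's glitch rather than introducing a gap of your own.
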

\begin{proof}
	Follows immediately from the previous proposition combined with the second part of Theorem~\ref{thm:recurrences}.
\end{proof}

Let us now analyze the polynomial $Q_{N_m}(t)$. There are $m-1$ cycles $C_0,C_1,\dots,C_{m-2}$ in $N$ with respective weights $q^0,q^1,\dots,q^{m-2}$. Two cycles $C_i$ and $C_j$ are vertex disjoint if and only if $|i-j|>1$. Thus by~\eqref{eq:Q_N_planar_intro}, we get that the degree $d$ of $Q_{N_m}(t)$ is equal to $\lfloor m/2\rfloor$ and 
\[Q_{N_m}(t)=\sum_{r=0}^d (-t)^{d-r} H_r,\]
where 
\begin{equation}\label{eq:H_lozenge}
H_r=\sum_{(i_1,i_2,\dots,i_r)} q^{i_1+i_2+\dots+i_r}
\end{equation}
and the sum is taken over all $r$-tuples $0\leq i_1<i_2<\dots<i_r\leq m-2$ of integers satisfying $i_k+1<i_{k+1}$ for all $k=1,2,\dots,r-1$. For example, for the network $\Nt_4$ in Figure~\ref{fig:Nt_m}, we get 
\[Q_{N_4}(t)=t^2-(1+q+q^2)t+q^{0+2}.\]

It turns out that these polynomials have already been extensively studied under the name \emph{Carlitz $q$-Fibonacci polynomials}.

\begin{definition}[{see~\cite{Carlitz} or~\cite{Cigler}}]
	For $n\geq 0$, define the \emph{Carlitz $q$-Fibonacci polynomial} $F_n(t)$ by 
	\[F_0(t)=0,\quad F_1(t)=1,\quad F_n(t)=F_{n-1}(t)+q^{n-3}tF_{n-2}(t).\]
\end{definition}
The first few values of $F_n(t)$ are therefore
\[F_2(t)=1,\quad F_3(t)=t+1,\quad F_4(t)=(1+q)t+1,\quad F_5(t)=q^2t^2+(1+q+q^2)t+1.\]

\begin{proposition}
	For each $m\geq 2$, we have 
	\[Q_{N_m}(t)=(-t)^d F_{m+1}(-1/t),\]
	where $d=\lfloor m/2\rfloor$ is the degree of both polynomials.
\end{proposition}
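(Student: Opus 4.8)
The plan is to reduce the claimed identity to a single coefficientwise statement and then prove that statement by a short induction. Recall from~\eqref{eq:H_lozenge} that $Q_{N_m}(t)=\sum_{r=0}^{d}(-t)^{d-r}H_r$, where I write $H_r=H_r^{(m)}$ to record the dependence on $m$ through the range $0\le i_1<\dots<i_r\le m-2$. Since $(-t)^{d}(-1/t)^{r}=(-t)^{d-r}$, we have
\[(-t)^{d}F_{m+1}(-1/t)=\sum_{r\ge 0}\bigl([t^r]F_{m+1}(t)\bigr)(-t)^{d-r},\]
so the proposition is equivalent to the assertion that $[t^r]F_{m+1}(t)=H_r^{(m)}$ for every $r\ge 0$, together with the degree bookkeeping that both sides vanish for $r>d$.

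Next I would establish a two-step recurrence for the numbers $H_r^{(m)}$ by conditioning on whether the largest allowed index $m-2$ is used. If $m-2\notin\{i_1,\dots,i_r\}$, the tuple is an admissible $r$-subset of $\{0,\dots,m-3\}$ and contributes $H_r^{(m-1)}$; if $m-2\in\{i_1,\dots,i_r\}$, then $m-3$ must be excluded (no two consecutive indices), and deleting $m-2$ leaves an admissible $(r-1)$-subset of $\{0,\dots,m-4\}$, contributing $q^{m-2}H_{r-1}^{(m-2)}$. This yields
\[H_r^{(m)}=H_r^{(m-1)}+q^{m-2}H_{r-1}^{(m-2)}.\]
On the other side, writing $b_n(r)=[t^r]F_n(t)$ and extracting the coefficient of $t^r$ from $F_{m+1}(t)=F_m(t)+q^{m-2}tF_{m-1}(t)$ gives exactly $b_{m+1}(r)=b_m(r)+q^{m-2}b_{m-1}(r-1)$, which is the same recurrence under the correspondence $H_r^{(m)}\leftrightarrow b_{m+1}(r)$.

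I would then finish by induction on $m$. The base cases $m=2$ and $m=3$ are checked directly: $F_3(t)=t+1$ and $F_4(t)=(1+q)t+1$ match $H_0^{(2)}=H_1^{(2)}=1$ and $H_0^{(3)}=1$, $H_1^{(3)}=1+q$. For $m\ge 4$ both recurrences coincide and $m-1,m-2\ge 2$, so $[t^r]F_{m+1}(t)=H_r^{(m)}$ follows from the inductive hypothesis. The degree statement is automatic: a subset of the $(m-1)$-element set $\{0,\dots,m-2\}$ with no two consecutive elements has size at most $\lceil (m-1)/2\rceil=\lfloor m/2\rfloor=d$, so $H_r^{(m)}=0$ for $r>d$ while $H_d^{(m)}\ne 0$, matching $\deg_t F_{m+1}=d$. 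Substituting back into the first display gives $Q_{N_m}(t)=(-t)^{d}F_{m+1}(-1/t)$.

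The only real obstacle here is bookkeeping rather than mathematical depth: keeping the index shift between $m$ and $n=m+1$ straight, aligning the two recurrences on the nose (in particular matching $q^{n-3}$ with $q^{m-2}$), and handling the floor in $d=\lfloor m/2\rfloor$ so that the top-degree terms line up. Once the coefficientwise reformulation $[t^r]F_{m+1}(t)=H_r^{(m)}$ is isolated, every step is a routine verification.
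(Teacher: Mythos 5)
Your proof is correct and follows essentially the same route as the paper's (one-line) proof: the paper also argues that both sides satisfy the same recurrence with the same initial conditions, using~\eqref{eq:H_lozenge}. You have simply made that argument explicit, via the coefficientwise identity $[t^r]F_{m+1}(t)=H_r^{(m)}$, the split on whether $m-2$ is used (giving $H_r^{(m)}=H_r^{(m-1)}+q^{m-2}H_{r-1}^{(m-2)}$), and the base cases $m=2,3$.
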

\begin{proof}
	It follows from~\eqref{eq:H_lozenge} that both sides satisfy the same recurrence relation with the same initial conditions.
\end{proof}

\subsection{Domino tilings and the octahedron recurrence}\label{sect:domino}

In this section, we reprove (and slightly generalize) the results of~\cite[Section~3]{GP2} using our machinery. Let $m\geq 2$ be an integer, and consider the strip $\TS_m=\{(x,y)\in\R^2\mid 0\leq y\leq m\}$. We are going to be interested in domino tilings of regions inside $\TS_m$.

Let $n\geq 1$ be another integer and define the vector $\g=(2n,0)$. To each lattice point $(i,j)\in\Z^2, 0\leq j\leq m$ of $\TS_m$ we assign a \emph{weight} $w_{i,j}$ that satisfies $w_{i,j}=w_{i+2n,j}$ for all $(i,j)$ and $w_{0,j}=w_{m,j}=1$. For each lattice point $(i,j)\in\Z^2$ such that $0\leq i<2n$ and $0<j<m$, we put $w_{i,j}=x_{ij}$ to be an indeterminate and let $\x$ be the set of all these $2n(m-1)$ indeterminates. This defines $w_{i,j}$ for all lattice points $(i,j)\in\TS_m$. 

Given integers $i,j$, a \emph{square $S$ with center $(i+1/2,j+1/2)$} is the convex hull of four lattice points with coordinates $(i+1/2,j+1/2)+(\pm1/2,\pm1/2)$. We say that $S$ is \emph{white} (resp., black) if $i+j$ is even (resp., odd).

\def\AZ{\Acal}
\def\TAZ{{\Acal_m}}
\def\pol{Z_m}
For integers $i,j,\ell$, we define the \emph{Aztec diamond $\AZ(i,j,\ell)$} to be the union of all squares $S$ that are fully contained in the region 
\[\{(x,y)\in\R^2: |x-i|+|y-j|\leq \ell+1\}.\]
Thus for example $\AZ(i,j,1)$ is the union of four squares. Define the \emph{truncated Aztec diamond $\TAZ(i,j,\ell)$} to be the intersection of $\AZ(i,j,\ell)$ with $\TS_m$. A \emph{domino tiling of $\TAZ(i,j,\ell)$} is a covering of $\TAZ(i,j,\ell)$ by $2\times 1$ rectangles such that their vertices belong to $\Z^2$ and their interiors do not intersect each other. An example of a domino tiling of~$\TAZ(i,j,\ell)$ is given in Figure~\ref{fig:domino}. 

\begin{figure}
	
\scalebox{0.8}{
\begin{tikzpicture}[scale=1.0]
\draw[->] (-0.10,0.00) -- (13.00,0.00);
\draw[->] (0.00,-0.10) -- (0.00,7.00);
\node[anchor=north, scale=0.8] (bottom0) at (0.00,-0.10) {$0$};
\draw[] (0.00,-0.10) -- (0.00,0.10);
\node[anchor=north, scale=0.8] (bottom1) at (1.00,-0.10) {$1$};
\draw[] (1.00,-0.10) -- (1.00,0.10);
\node[anchor=north, scale=0.8] (bottom2) at (2.00,-0.10) {$2$};
\draw[] (2.00,-0.10) -- (2.00,0.10);
\node[anchor=north, scale=0.8] (bottom3) at (3.00,-0.10) {$3$};
\draw[] (3.00,-0.10) -- (3.00,0.10);
\node[anchor=north, scale=0.8] (bottom4) at (4.00,-0.10) {$4$};
\draw[] (4.00,-0.10) -- (4.00,0.10);
\node[anchor=north, scale=0.8] (bottom5) at (5.00,-0.10) {$5$};
\draw[] (5.00,-0.10) -- (5.00,0.10);
\node[anchor=north, scale=0.8] (bottom6) at (6.00,-0.10) {$6$};
\draw[] (6.00,-0.10) -- (6.00,0.10);
\node[anchor=north, scale=0.8] (bottom7) at (7.00,-0.10) {$i=7$};
\draw[] (7.00,-0.10) -- (7.00,0.10);
\node[anchor=north, scale=0.8] (bottom8) at (8.00,-0.10) {$8$};
\draw[] (8.00,-0.10) -- (8.00,0.10);
\node[anchor=north, scale=0.8] (bottom9) at (9.00,-0.10) {$9$};
\draw[] (9.00,-0.10) -- (9.00,0.10);
\node[anchor=north, scale=0.8] (bottom10) at (10.00,-0.10) {$10$};
\draw[] (10.00,-0.10) -- (10.00,0.10);
\node[anchor=north, scale=0.8] (bottom11) at (11.00,-0.10) {$11$};
\draw[] (11.00,-0.10) -- (11.00,0.10);
\node[anchor=north, scale=0.8] (bottom12) at (12.00,-0.10) {$12$};
\draw[] (12.00,-0.10) -- (12.00,0.10);
\node[anchor=east, scale=0.8] (left0) at (-0.10,0.00) {$0$};
\draw[] (-0.10,0.00) -- (0.10,0.00);
\node[anchor=east, scale=0.8] (left1) at (-0.10,1.00) {$1$};
\draw[] (-0.10,1.00) -- (0.10,1.00);
\node[anchor=east, scale=0.8] (left2) at (-0.10,2.00) {$2$};
\draw[] (-0.10,2.00) -- (0.10,2.00);
\node[anchor=east, scale=0.8] (left3) at (-0.10,3.00) {$j=3$};
\draw[] (-0.10,3.00) -- (0.10,3.00);
\node[anchor=east, scale=0.8] (left4) at (-0.10,4.00) {$4$};
\draw[] (-0.10,4.00) -- (0.10,4.00);
\node[anchor=east, scale=0.8] (left5) at (-0.10,5.00) {$m=5$};
\draw[] (-0.10,5.00) -- (0.10,5.00);
\node[anchor=north west] (i) at (13.00,-0.10) {$i$};
\node[anchor=south east] (j) at (-0.10,7.00) {$j$};
\draw [draw=none,fill=black,opacity=0.1] (0.0,1.0) rectangle (1.0,2.0); \draw [draw=none,fill=black,opacity=0.1] (0.0,3.0) rectangle (1.0,4.0); \draw [draw=none,fill=black,opacity=0.1] (1.0,0.0) rectangle (2.0,1.0); \draw [draw=none,fill=black,opacity=0.1] (1.0,2.0) rectangle (2.0,3.0); \draw [draw=none,fill=black,opacity=0.1] (1.0,4.0) rectangle (2.0,5.0); \draw [draw=none,fill=black,opacity=0.1] (2.0,1.0) rectangle (3.0,2.0); \draw [draw=none,fill=black,opacity=0.1] (2.0,3.0) rectangle (3.0,4.0); \draw [draw=none,fill=black,opacity=0.1] (3.0,0.0) rectangle (4.0,1.0); \draw [draw=none,fill=black,opacity=0.1] (3.0,2.0) rectangle (4.0,3.0); \draw [draw=none,fill=black,opacity=0.1] (3.0,4.0) rectangle (4.0,5.0); \draw [draw=none,fill=black,opacity=0.1] (4.0,1.0) rectangle (5.0,2.0); \draw [draw=none,fill=black,opacity=0.1] (4.0,3.0) rectangle (5.0,4.0); \draw [draw=none,fill=black,opacity=0.1] (5.0,0.0) rectangle (6.0,1.0); \draw [draw=none,fill=black,opacity=0.1] (5.0,2.0) rectangle (6.0,3.0); \draw [draw=none,fill=black,opacity=0.1] (5.0,4.0) rectangle (6.0,5.0); \draw [draw=none,fill=black,opacity=0.1] (6.0,1.0) rectangle (7.0,2.0); \draw [draw=none,fill=black,opacity=0.1] (6.0,3.0) rectangle (7.0,4.0); \draw [draw=none,fill=black,opacity=0.1] (7.0,0.0) rectangle (8.0,1.0); \draw [draw=none,fill=black,opacity=0.1] (7.0,2.0) rectangle (8.0,3.0); \draw [draw=none,fill=black,opacity=0.1] (7.0,4.0) rectangle (8.0,5.0); \draw [draw=none,fill=black,opacity=0.1] (8.0,1.0) rectangle (9.0,2.0); \draw [draw=none,fill=black,opacity=0.1] (8.0,3.0) rectangle (9.0,4.0); \draw [draw=none,fill=black,opacity=0.1] (9.0,0.0) rectangle (10.0,1.0); \draw [draw=none,fill=black,opacity=0.1] (9.0,2.0) rectangle (10.0,3.0); \draw [draw=none,fill=black,opacity=0.1] (9.0,4.0) rectangle (10.0,5.0); \draw [draw=none,fill=black,opacity=0.1] (10.0,1.0) rectangle (11.0,2.0); \draw [draw=none,fill=black,opacity=0.1] (10.0,3.0) rectangle (11.0,4.0); \draw [draw=none,fill=black,opacity=0.1] (11.0,0.0) rectangle (12.0,1.0); \draw [draw=none,fill=black,opacity=0.1] (11.0,2.0) rectangle (12.0,3.0); \draw [draw=none,fill=black,opacity=0.1] (11.0,4.0) rectangle (12.0,5.0); \draw[line width=0.5pt,dotted] (-0.10,5.00) -- (13.00,5.00);
\draw[line width=1.5pt] (7.00,-1.00) -- (6.00,-1.00);
\draw[line width=1.5pt] (6.00,-1.00) -- (6.00,0.00);
\draw[line width=1.5pt] (6.00,0.00) -- (5.00,0.00);
\draw[line width=1.5pt] (5.00,0.00) -- (5.00,1.00);
\draw[line width=1.5pt] (5.00,1.00) -- (4.00,1.00);
\draw[line width=1.5pt] (4.00,1.00) -- (4.00,2.00);
\draw[line width=1.5pt] (4.00,2.00) -- (3.00,2.00);
\draw[line width=1.5pt] (3.00,2.00) -- (3.00,3.00);
\draw[line width=1.5pt] (7.00,7.00) -- (6.00,7.00);
\draw[line width=1.5pt] (6.00,7.00) -- (6.00,6.00);
\draw[line width=1.5pt] (6.00,6.00) -- (5.00,6.00);
\draw[line width=1.5pt] (5.00,6.00) -- (5.00,5.00);
\draw[line width=1.5pt] (5.00,5.00) -- (4.00,5.00);
\draw[line width=1.5pt] (4.00,5.00) -- (4.00,4.00);
\draw[line width=1.5pt] (4.00,4.00) -- (3.00,4.00);
\draw[line width=1.5pt] (3.00,4.00) -- (3.00,3.00);
\draw[line width=1.5pt] (7.00,-1.00) -- (8.00,-1.00);
\draw[line width=1.5pt] (8.00,-1.00) -- (8.00,0.00);
\draw[line width=1.5pt] (8.00,0.00) -- (9.00,0.00);
\draw[line width=1.5pt] (9.00,0.00) -- (9.00,1.00);
\draw[line width=1.5pt] (9.00,1.00) -- (10.00,1.00);
\draw[line width=1.5pt] (10.00,1.00) -- (10.00,2.00);
\draw[line width=1.5pt] (10.00,2.00) -- (11.00,2.00);
\draw[line width=1.5pt] (11.00,2.00) -- (11.00,3.00);
\draw[line width=1.5pt] (7.00,7.00) -- (8.00,7.00);
\draw[line width=1.5pt] (8.00,7.00) -- (8.00,6.00);
\draw[line width=1.5pt] (8.00,6.00) -- (9.00,6.00);
\draw[line width=1.5pt] (9.00,6.00) -- (9.00,5.00);
\draw[line width=1.5pt] (9.00,5.00) -- (10.00,5.00);
\draw[line width=1.5pt] (10.00,5.00) -- (10.00,4.00);
\draw[line width=1.5pt] (10.00,4.00) -- (11.00,4.00);
\draw[line width=1.5pt] (11.00,4.00) -- (11.00,3.00);
\draw [rounded corners, line width=0.5pt] (4.025,4.025) rectangle (5.975,4.975); \draw [rounded corners, line width=0.5pt] (3.025,3.025) rectangle (4.975,3.975); \draw [rounded corners, line width=0.5pt] (3.025,2.025) rectangle (4.975,2.975); \draw [rounded corners, line width=0.5pt] (4.025,1.025) rectangle (5.975,1.975); \draw [rounded corners, line width=0.5pt] (5.025,0.025) rectangle (6.975,0.975); \draw [rounded corners, line width=0.5pt] (7.025,0.025) rectangle (8.975,0.975); \draw [rounded corners, line width=0.5pt] (7.025,3.025) rectangle (8.975,3.975); \draw [rounded corners, line width=0.5pt] (7.025,4.025) rectangle (8.975,4.975); \draw [rounded corners, line width=0.5pt] (6.025,3.025) rectangle (6.975,4.975); \draw [rounded corners, line width=0.5pt] (5.025,2.025) rectangle (5.975,3.975); \draw [rounded corners, line width=0.5pt] (6.025,1.025) rectangle (6.975,2.975); \draw [rounded corners, line width=0.5pt] (7.025,1.025) rectangle (7.975,2.975); \draw [rounded corners, line width=0.5pt] (8.025,1.025) rectangle (8.975,2.975); \draw [rounded corners, line width=0.5pt] (9.025,1.025) rectangle (9.975,2.975); \draw [rounded corners, line width=0.5pt] (10.025,2.025) rectangle (10.975,3.975); \draw [rounded corners, line width=0.5pt] (9.025,3.025) rectangle (9.975,4.975); \node[anchor=south west] (TAZ) at (8.00,7.00) {$\AZ(i,j,\ell)$};
\end{tikzpicture}}

	\caption{\label{fig:domino} A domino tiling of $\TAZ(i,j,\ell)$ for $i=7,j=3,\ell=4,$ and $m=5$.}
\end{figure}
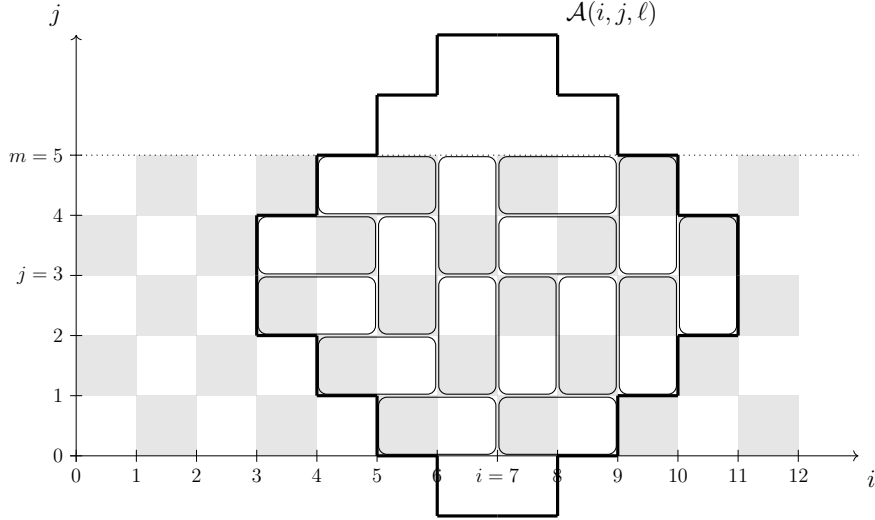

Given a domino tiling $T$ of $\TAZ(i,j,\ell)$, the \emph{weight} $\wt(T)$ of $T$ is the product over all lattice points $(i',j')$ in the interior of $\TAZ(i,j,\ell)$ of $w_{i',j'}^\e$, where $\e\in\{-1,0,+1\}$ is defined as follows:
\[\e= \begin{cases}
      	+1,&\text{if $(i',j')$ is adjacent to $4$ dominoes of $T$};\\
      	0,&\text{if $(i',j')$ is adjacent to $3$ dominoes of $T$};\\
      	-1,&\text{if $(i',j')$ is adjacent to $2$ dominoes of $T$}.
      \end{cases}\]
This assignment of weights was introduced by Speyer~\cite{Sp} to give a formula for the values of the octahedron recurrence. There is a more complicated rule that assigns the weights to vertices on the boundary of $\TAZ(i,j,\ell)$, however, we will just omit them from $\wt(T)$ for simplicity. 

Define $\pol(i,j,\ell)$ to be the sum of weights of all domino tilings of $\TAZ(i,j,\ell)$.

Let us now introduce the following planar cylindrical network $\Nt_{n,m}$. Its vertex set $\Vt$ is the set of all centers of black squares $S$ that lie fully inside $\TS_m$. Suppose that $S'$ is a black square with center $(i'+1/2,j'+1/2)$ and $S$ is a black square with center $(i+1/2,j+1/2)$ and that both of them belong to $\TS_m$. Then $\Nt_{n,m}$ contains an edge $e=(i'+1/2,j'+1/2)\to (i+1/2,j+1/2)$ if and only if either $i=i'+1,j=j'+1$ or $i=i'+2,j=j'$ or $i=i'+1,j=j'-1$. For each of these three cases, we assign the \emph{weight} $\wt(e)$ to $e$ as follows:
\[\wt(e)= \begin{cases}
          	\frac{w_{i+1,j+1}}{w_{i,j}},&\text{ if $i=i'+1,j=j'+1$};\\
          	\frac{w_{i+1,j+1}w_{i+1,j}}{w_{i,j}w_{i,j+1}},&\text{ if $i=i'+2,j=j'$};\\
          	\frac{w_{i+1,j}}{w_{i,j+1}},&\text{ if $i=i'+1,j=j'-1$}.
          \end{cases}\]
This defines the network $\Nt_{n,m}$. An example of $\Nt_{n,m}$ is given in Figure~\ref{fig:network_domino}. 

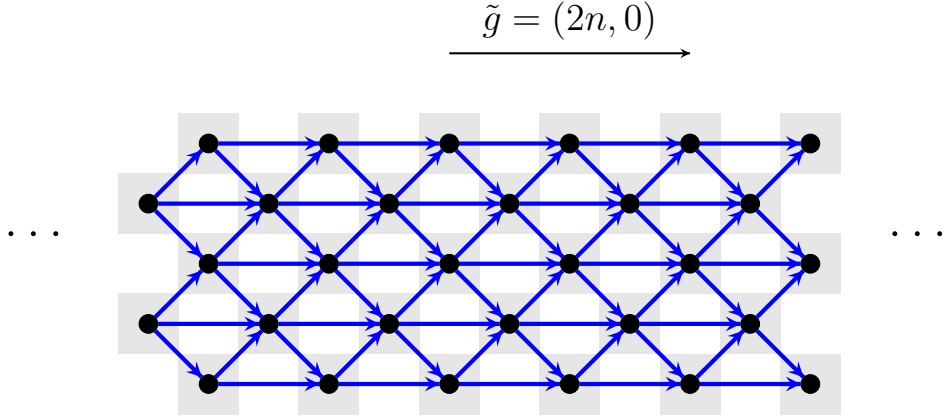
\begin{figure}
	
	\scalebox{0.8}{
\begin{tikzpicture}[scale=1.0]
\draw [draw=none,fill=black,opacity=0.1] (0.0,1.0) rectangle (1.0,2.0); \draw [draw=none,fill=black,opacity=0.1] (0.0,3.0) rectangle (1.0,4.0); \draw [draw=none,fill=black,opacity=0.1] (1.0,0.0) rectangle (2.0,1.0); \draw [draw=none,fill=black,opacity=0.1] (1.0,2.0) rectangle (2.0,3.0); \draw [draw=none,fill=black,opacity=0.1] (1.0,4.0) rectangle (2.0,5.0); \draw [draw=none,fill=black,opacity=0.1] (2.0,1.0) rectangle (3.0,2.0); \draw [draw=none,fill=black,opacity=0.1] (2.0,3.0) rectangle (3.0,4.0); \draw [draw=none,fill=black,opacity=0.1] (3.0,0.0) rectangle (4.0,1.0); \draw [draw=none,fill=black,opacity=0.1] (3.0,2.0) rectangle (4.0,3.0); \draw [draw=none,fill=black,opacity=0.1] (3.0,4.0) rectangle (4.0,5.0); \draw [draw=none,fill=black,opacity=0.1] (4.0,1.0) rectangle (5.0,2.0); \draw [draw=none,fill=black,opacity=0.1] (4.0,3.0) rectangle (5.0,4.0); \draw [draw=none,fill=black,opacity=0.1] (5.0,0.0) rectangle (6.0,1.0); \draw [draw=none,fill=black,opacity=0.1] (5.0,2.0) rectangle (6.0,3.0); \draw [draw=none,fill=black,opacity=0.1] (5.0,4.0) rectangle (6.0,5.0); \draw [draw=none,fill=black,opacity=0.1] (6.0,1.0) rectangle (7.0,2.0); \draw [draw=none,fill=black,opacity=0.1] (6.0,3.0) rectangle (7.0,4.0); \draw [draw=none,fill=black,opacity=0.1] (7.0,0.0) rectangle (8.0,1.0); \draw [draw=none,fill=black,opacity=0.1] (7.0,2.0) rectangle (8.0,3.0); \draw [draw=none,fill=black,opacity=0.1] (7.0,4.0) rectangle (8.0,5.0); \draw [draw=none,fill=black,opacity=0.1] (8.0,1.0) rectangle (9.0,2.0); \draw [draw=none,fill=black,opacity=0.1] (8.0,3.0) rectangle (9.0,4.0); \draw [draw=none,fill=black,opacity=0.1] (9.0,0.0) rectangle (10.0,1.0); \draw [draw=none,fill=black,opacity=0.1] (9.0,2.0) rectangle (10.0,3.0); \draw [draw=none,fill=black,opacity=0.1] (9.0,4.0) rectangle (10.0,5.0); \draw [draw=none,fill=black,opacity=0.1] (10.0,1.0) rectangle (11.0,2.0); \draw [draw=none,fill=black,opacity=0.1] (10.0,3.0) rectangle (11.0,4.0); \draw [draw=none,fill=black,opacity=0.1] (11.0,0.0) rectangle (12.0,1.0); \draw [draw=none,fill=black,opacity=0.1] (11.0,2.0) rectangle (12.0,3.0); \draw [draw=none,fill=black,opacity=0.1] (11.0,4.0) rectangle (12.0,5.0); \node[draw,fill=black,circle,scale=0.8] (n0x1) at (0.50,1.50) { };
\node[draw,fill=black,circle,scale=0.8] (n0x3) at (0.50,3.50) { };
\node[draw,fill=black,circle,scale=0.8] (n1x0) at (1.50,0.50) { };
\node[draw,fill=black,circle,scale=0.8] (n1x2) at (1.50,2.50) { };
\node[draw,fill=black,circle,scale=0.8] (n1x4) at (1.50,4.50) { };
\node[draw,fill=black,circle,scale=0.8] (n2x1) at (2.50,1.50) { };
\node[draw,fill=black,circle,scale=0.8] (n2x3) at (2.50,3.50) { };
\node[draw,fill=black,circle,scale=0.8] (n3x0) at (3.50,0.50) { };
\node[draw,fill=black,circle,scale=0.8] (n3x2) at (3.50,2.50) { };
\node[draw,fill=black,circle,scale=0.8] (n3x4) at (3.50,4.50) { };
\node[draw,fill=black,circle,scale=0.8] (n4x1) at (4.50,1.50) { };
\node[draw,fill=black,circle,scale=0.8] (n4x3) at (4.50,3.50) { };
\node[draw,fill=black,circle,scale=0.8] (n5x0) at (5.50,0.50) { };
\node[draw,fill=black,circle,scale=0.8] (n5x2) at (5.50,2.50) { };
\node[draw,fill=black,circle,scale=0.8] (n5x4) at (5.50,4.50) { };
\node[draw,fill=black,circle,scale=0.8] (n6x1) at (6.50,1.50) { };
\node[draw,fill=black,circle,scale=0.8] (n6x3) at (6.50,3.50) { };
\node[draw,fill=black,circle,scale=0.8] (n7x0) at (7.50,0.50) { };
\node[draw,fill=black,circle,scale=0.8] (n7x2) at (7.50,2.50) { };
\node[draw,fill=black,circle,scale=0.8] (n7x4) at (7.50,4.50) { };
\node[draw,fill=black,circle,scale=0.8] (n8x1) at (8.50,1.50) { };
\node[draw,fill=black,circle,scale=0.8] (n8x3) at (8.50,3.50) { };
\node[draw,fill=black,circle,scale=0.8] (n9x0) at (9.50,0.50) { };
\node[draw,fill=black,circle,scale=0.8] (n9x2) at (9.50,2.50) { };
\node[draw,fill=black,circle,scale=0.8] (n9x4) at (9.50,4.50) { };
\node[draw,fill=black,circle,scale=0.8] (n10x1) at (10.50,1.50) { };
\node[draw,fill=black,circle,scale=0.8] (n10x3) at (10.50,3.50) { };
\node[draw,fill=black,circle,scale=0.8] (n11x0) at (11.50,0.50) { };
\node[draw,fill=black,circle,scale=0.8] (n11x2) at (11.50,2.50) { };
\node[draw,fill=black,circle,scale=0.8] (n11x4) at (11.50,4.50) { };
\tikzset{myptr/.style={decoration={markings,mark=at position 1 with %
    {\arrow[scale=1,>=stealth]{>}}},postaction={decorate}}}
\draw[myptr,line width=2pt,blue] (n0x1) -- (n1x2);
\draw[myptr,line width=2pt,blue] (n0x1) -- (n1x0);
\draw[myptr,line width=2pt,blue] (n0x1) -- (n2x1);
\draw[myptr,line width=2pt,blue] (n0x3) -- (n1x4);
\draw[myptr,line width=2pt,blue] (n0x3) -- (n1x2);
\draw[myptr,line width=2pt,blue] (n0x3) -- (n2x3);
\draw[myptr,line width=2pt,blue] (n1x0) -- (n2x1);
\draw[myptr,line width=2pt,blue] (n1x0) -- (n3x0);
\draw[myptr,line width=2pt,blue] (n1x2) -- (n2x3);
\draw[myptr,line width=2pt,blue] (n1x2) -- (n2x1);
\draw[myptr,line width=2pt,blue] (n1x2) -- (n3x2);
\draw[myptr,line width=2pt,blue] (n1x4) -- (n2x3);
\draw[myptr,line width=2pt,blue] (n1x4) -- (n3x4);
\draw[myptr,line width=2pt,blue] (n2x1) -- (n3x2);
\draw[myptr,line width=2pt,blue] (n2x1) -- (n3x0);
\draw[myptr,line width=2pt,blue] (n2x1) -- (n4x1);
\draw[myptr,line width=2pt,blue] (n2x3) -- (n3x4);
\draw[myptr,line width=2pt,blue] (n2x3) -- (n3x2);
\draw[myptr,line width=2pt,blue] (n2x3) -- (n4x3);
\draw[myptr,line width=2pt,blue] (n3x0) -- (n4x1);
\draw[myptr,line width=2pt,blue] (n3x0) -- (n5x0);
\draw[myptr,line width=2pt,blue] (n3x2) -- (n4x3);
\draw[myptr,line width=2pt,blue] (n3x2) -- (n4x1);
\draw[myptr,line width=2pt,blue] (n3x2) -- (n5x2);
\draw[myptr,line width=2pt,blue] (n3x4) -- (n4x3);
\draw[myptr,line width=2pt,blue] (n3x4) -- (n5x4);
\draw[myptr,line width=2pt,blue] (n4x1) -- (n5x2);
\draw[myptr,line width=2pt,blue] (n4x1) -- (n5x0);
\draw[myptr,line width=2pt,blue] (n4x1) -- (n6x1);
\draw[myptr,line width=2pt,blue] (n4x3) -- (n5x4);
\draw[myptr,line width=2pt,blue] (n4x3) -- (n5x2);
\draw[myptr,line width=2pt,blue] (n4x3) -- (n6x3);
\draw[myptr,line width=2pt,blue] (n5x0) -- (n6x1);
\draw[myptr,line width=2pt,blue] (n5x0) -- (n7x0);
\draw[myptr,line width=2pt,blue] (n5x2) -- (n6x3);
\draw[myptr,line width=2pt,blue] (n5x2) -- (n6x1);
\draw[myptr,line width=2pt,blue] (n5x2) -- (n7x2);
\draw[myptr,line width=2pt,blue] (n5x4) -- (n6x3);
\draw[myptr,line width=2pt,blue] (n5x4) -- (n7x4);
\draw[myptr,line width=2pt,blue] (n6x1) -- (n7x2);
\draw[myptr,line width=2pt,blue] (n6x1) -- (n7x0);
\draw[myptr,line width=2pt,blue] (n6x1) -- (n8x1);
\draw[myptr,line width=2pt,blue] (n6x3) -- (n7x4);
\draw[myptr,line width=2pt,blue] (n6x3) -- (n7x2);
\draw[myptr,line width=2pt,blue] (n6x3) -- (n8x3);
\draw[myptr,line width=2pt,blue] (n7x0) -- (n8x1);
\draw[myptr,line width=2pt,blue] (n7x0) -- (n9x0);
\draw[myptr,line width=2pt,blue] (n7x2) -- (n8x3);
\draw[myptr,line width=2pt,blue] (n7x2) -- (n8x1);
\draw[myptr,line width=2pt,blue] (n7x2) -- (n9x2);
\draw[myptr,line width=2pt,blue] (n7x4) -- (n8x3);
\draw[myptr,line width=2pt,blue] (n7x4) -- (n9x4);
\draw[myptr,line width=2pt,blue] (n8x1) -- (n9x2);
\draw[myptr,line width=2pt,blue] (n8x1) -- (n9x0);
\draw[myptr,line width=2pt,blue] (n8x1) -- (n10x1);
\draw[myptr,line width=2pt,blue] (n8x3) -- (n9x4);
\draw[myptr,line width=2pt,blue] (n8x3) -- (n9x2);
\draw[myptr,line width=2pt,blue] (n8x3) -- (n10x3);
\draw[myptr,line width=2pt,blue] (n9x0) -- (n10x1);
\draw[myptr,line width=2pt,blue] (n9x0) -- (n11x0);
\draw[myptr,line width=2pt,blue] (n9x2) -- (n10x3);
\draw[myptr,line width=2pt,blue] (n9x2) -- (n10x1);
\draw[myptr,line width=2pt,blue] (n9x2) -- (n11x2);
\draw[myptr,line width=2pt,blue] (n9x4) -- (n10x3);
\draw[myptr,line width=2pt,blue] (n9x4) -- (n11x4);
\draw[myptr,line width=2pt,blue] (n10x1) -- (n11x2);
\draw[myptr,line width=2pt,blue] (n10x1) -- (n11x0);
\draw[myptr,line width=2pt,blue] (n10x3) -- (n11x4);
\draw[myptr,line width=2pt,blue] (n10x3) -- (n11x2);
\node[scale=2,anchor=east] (ldots) at (-0.50,3.00) {$\dots$};
\node[scale=2,anchor=west] (rdots) at (12.50,3.00) {$\dots$};
\draw[line width=1pt,myptr] (5.50,6.00) to node[midway,above,scale=1.5] {$\g=(2n,0)$} (9.50,6.00);
\end{tikzpicture}}

	\caption{\label{fig:network_domino} The network $\Nt_{n,m}$ for $m=5$ and $n=2$.}
\end{figure}

Now given a lattice point $(i,j)\in \TS_m$ and an integer $\ell$, there is a simple bijection that associates to each domino tiling $T$ of $\TAZ(i,j,\ell)$ an $r$-path $\bij(T)$ between two $r$-vertices $\ubft$ and $\vbft$. Here $r,\ubft,\vbft$ only depend on $i,j,\ell$ and not on $T$. 

We construct $\bij(T)$ using a local rule in Figure~\ref{fig:local_rules}. Consider a black square $S$ with center $(i'+1/2,j'+1/2)$ such that the white square $S'$ with center $(i'+3/2,j'+1/2)$ to the right of $S$ lies inside $\TAZ(i,j,\ell)$. There is exactly one domino $D$ of $T$ that contains $S'$, and there are four possibilities for the black square $S''$ contained in $D$. Let $(i''+1/2,j''+1/2)$ be the center of $S''$. Our local rule reads as follows:
\begin{enumerate}
	\item If $i''=i',j''=j'$ then there is no edge in $\bij(T)$ coming out of $(i'+1/2,j'+1/2)$;
	\item otherwise there an edge $(i'+1/2,j'+1/2)\to(i''+1/2,j''+1/2)$ in $\bij(T)$.
\end{enumerate}
See Figure~\ref{fig:local_rules} for an illustration. 

\setlength{\tabcolsep}{12pt}
\begin{figure}
\centering

\begin{tabular}{cccc}
\scalebox{0.8}{
\begin{tikzpicture}[scale=1.0]
\tikzset{myptr/.style={decoration={markings,mark=at position 1 with %
    {\arrow[scale=1,>=stealth]{>}}},postaction={decorate}}}
\draw [draw=none,fill=black,opacity=0.1] (0.0,0.0) rectangle (1.0,1.0); \node[draw,fill=black,circle,scale=0.8] (from) at (0.50,0.50) { };
\node[anchor=north west] (S) at (0.00,0.00) {$S$};
\draw [rounded corners, line width=0.5pt] (1.025,0.025) rectangle (2.975,0.975); \draw [draw=none,fill=black,opacity=0.1] (2.0,0.0) rectangle (3.0,1.0); \node[draw,fill=black,circle,scale=0.8] (to) at (2.50,0.50) { };
\draw[myptr,line width=2pt,blue] (from) -- (to);
\node[anchor=north] (SSS) at (2.50,0.00) {$S''$};
\node[anchor=north] (SS) at (1.50,0.00) {$S'$};
\end{tikzpicture}}
&
\scalebox{0.8}{
\begin{tikzpicture}[scale=1.0]
\tikzset{myptr/.style={decoration={markings,mark=at position 1 with %
    {\arrow[scale=1,>=stealth]{>}}},postaction={decorate}}}
\draw [draw=none,fill=black,opacity=0.1] (0.0,0.0) rectangle (1.0,1.0); \node[draw,fill=black,circle,scale=0.8] (from) at (0.50,0.50) { };
\node[anchor=north west] (S) at (0.00,0.00) {$S$};
\node[anchor=north] (SS) at (1.50,0.00) {$S'$};
\draw [rounded corners, line width=0.5pt] (1.025,0.025) rectangle (1.975,1.975); \draw [draw=none,fill=black,opacity=0.1] (1.0,1.0) rectangle (2.0,2.0); \node[draw,fill=black,circle,scale=0.8] (to) at (1.50,1.50) { };
\node[anchor=west] (SSS) at (2.00,1.50) {$S''$};
\draw[myptr,line width=2pt,blue] (from) -- (to);
\end{tikzpicture}}
&
\scalebox{0.8}{
\begin{tikzpicture}[scale=1.0]
\tikzset{myptr/.style={decoration={markings,mark=at position 1 with %
    {\arrow[scale=1,>=stealth]{>}}},postaction={decorate}}}
\draw [draw=none,fill=black,opacity=0.1] (0.0,0.0) rectangle (1.0,1.0); \node[draw,fill=black,circle,scale=0.8] (from) at (0.50,0.50) { };
\node[anchor=north west] (S) at (0.00,0.00) {$S$};
\node[anchor=south] (SS) at (1.50,1.00) {$S'$};
\draw [rounded corners, line width=0.5pt] (1.025,-0.975) rectangle (1.975,0.975); \draw [draw=none,fill=black,opacity=0.1] (1.0,-1.0) rectangle (2.0,0.0); \node[draw,fill=black,circle,scale=0.8] (to) at (1.50,-0.50) { };
\draw[myptr,line width=2pt,blue] (from) -- (to);
\node[anchor=west] (SSS) at (2.00,-0.50) {$S''$};
\end{tikzpicture}}
&
\scalebox{0.8}{
\begin{tikzpicture}[scale=1.0]
\tikzset{myptr/.style={decoration={markings,mark=at position 1 with %
    {\arrow[scale=1,>=stealth]{>}}},postaction={decorate}}}
\draw [draw=none,fill=black,opacity=0.1] (0.0,0.0) rectangle (1.0,1.0); \node[draw,fill=black,circle,scale=0.8] (from) at (0.50,0.50) { };
\node[anchor=west] (SS) at (2.00,0.50) {$S'$};
\draw [rounded corners, line width=0.5pt] (0.025,0.025) rectangle (1.975,0.975); \node[draw,fill=black,circle,scale=0.8] (to) at (0.50,0.50) { };
\node[anchor=north east] (S) at (1.00,0.00) {$S=S''$};
\end{tikzpicture}}
\\

\end{tabular}

	\caption{\label{fig:local_rules} Local rules for constructing an $r$-path $\bij(T)$ in $\Nt_{n,m}$ from a given domino tiling $T$.}
\end{figure}
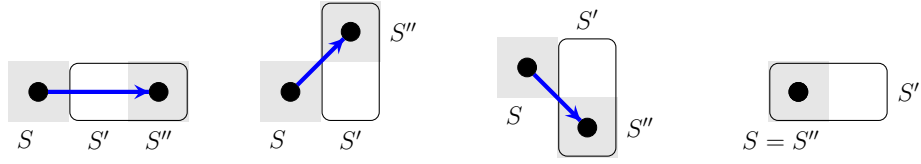

It is easy to see that in the second case, the edge $(i'+1/2,j'+1/2)\to(i''+1/2,j''+1/2)$ always is an edge of $\Nt_{n,m}$. Thus our local rule defines $\bij(T)$ as a collection of edges of $\Nt_{n,m}$ and it follows that every vertex $(i'+1/2,j'+1/2)$ in the interior of $\TAZ(i,j,\ell-1)$ either is isolated in $\bij(T)$ or has indegree and outdegree $1$. Thus $\bij(T)$ is an $r$-path for some $r\geq 0$. Clearly, the start $\ubft$ and end $\vbft$ of $\bij(T)$ do not actually depend on $T$ and thus the same is true for $r$. It is also straightforward to check that given any $r$-path $\Pbft$ in $\Nt_{n,m}$ that starts at $\ubft$ and ends at $\vbft$, there is a unique domino tiling $T$ of $\TAZ(i,j,\ell)$ such that $\Pbft=\Pbft(T)$. For example, if $T$ is the domino tiling from Figure~\ref{fig:domino} then $\bij(T)$ is shown in Figure~\ref{fig:together}.

\begin{figure}
	\scalebox{0.8}{
\begin{tikzpicture}[scale=1.0]
\draw [draw=none,fill=black,opacity=0.1] (0.0,1.0) rectangle (1.0,2.0); \draw [draw=none,fill=black,opacity=0.1] (0.0,3.0) rectangle (1.0,4.0); \draw [draw=none,fill=black,opacity=0.1] (1.0,0.0) rectangle (2.0,1.0); \draw [draw=none,fill=black,opacity=0.1] (1.0,2.0) rectangle (2.0,3.0); \draw [draw=none,fill=black,opacity=0.1] (1.0,4.0) rectangle (2.0,5.0); \draw [draw=none,fill=black,opacity=0.1] (2.0,1.0) rectangle (3.0,2.0); \draw [draw=none,fill=black,opacity=0.1] (2.0,3.0) rectangle (3.0,4.0); \draw [draw=none,fill=black,opacity=0.1] (3.0,0.0) rectangle (4.0,1.0); \draw [draw=none,fill=black,opacity=0.1] (3.0,2.0) rectangle (4.0,3.0); \draw [draw=none,fill=black,opacity=0.1] (3.0,4.0) rectangle (4.0,5.0); \draw [draw=none,fill=black,opacity=0.1] (4.0,1.0) rectangle (5.0,2.0); \draw [draw=none,fill=black,opacity=0.1] (4.0,3.0) rectangle (5.0,4.0); \draw [draw=none,fill=black,opacity=0.1] (5.0,0.0) rectangle (6.0,1.0); \draw [draw=none,fill=black,opacity=0.1] (5.0,2.0) rectangle (6.0,3.0); \draw [draw=none,fill=black,opacity=0.1] (5.0,4.0) rectangle (6.0,5.0); \draw [draw=none,fill=black,opacity=0.1] (6.0,1.0) rectangle (7.0,2.0); \draw [draw=none,fill=black,opacity=0.1] (6.0,3.0) rectangle (7.0,4.0); \draw [draw=none,fill=black,opacity=0.1] (7.0,0.0) rectangle (8.0,1.0); \draw [draw=none,fill=black,opacity=0.1] (7.0,2.0) rectangle (8.0,3.0); \draw [draw=none,fill=black,opacity=0.1] (7.0,4.0) rectangle (8.0,5.0); \draw [draw=none,fill=black,opacity=0.1] (8.0,1.0) rectangle (9.0,2.0); \draw [draw=none,fill=black,opacity=0.1] (8.0,3.0) rectangle (9.0,4.0); \draw [draw=none,fill=black,opacity=0.1] (9.0,0.0) rectangle (10.0,1.0); \draw [draw=none,fill=black,opacity=0.1] (9.0,2.0) rectangle (10.0,3.0); \draw [draw=none,fill=black,opacity=0.1] (9.0,4.0) rectangle (10.0,5.0); \draw [draw=none,fill=black,opacity=0.1] (10.0,1.0) rectangle (11.0,2.0); \draw [draw=none,fill=black,opacity=0.1] (10.0,3.0) rectangle (11.0,4.0); \draw [draw=none,fill=black,opacity=0.1] (11.0,0.0) rectangle (12.0,1.0); \draw [draw=none,fill=black,opacity=0.1] (11.0,2.0) rectangle (12.0,3.0); \draw [draw=none,fill=black,opacity=0.1] (11.0,4.0) rectangle (12.0,5.0); \draw[line width=0.5pt,dotted] (-0.10,5.00) -- (13.00,5.00);
\draw[->] (-0.10,0.00) -- (13.00,0.00);
\draw[->] (0.00,-0.10) -- (0.00,7.00);
\node[anchor=north, scale=0.8] (bottom0) at (0.00,-0.10) {$0$};
\draw[] (0.00,-0.10) -- (0.00,0.10);
\node[anchor=north, scale=0.8] (bottom1) at (1.00,-0.10) {$1$};
\draw[] (1.00,-0.10) -- (1.00,0.10);
\node[anchor=north, scale=0.8] (bottom2) at (2.00,-0.10) {$2$};
\draw[] (2.00,-0.10) -- (2.00,0.10);
\node[anchor=north, scale=0.8] (bottom3) at (3.00,-0.10) {$3$};
\draw[] (3.00,-0.10) -- (3.00,0.10);
\node[anchor=north, scale=0.8] (bottom4) at (4.00,-0.10) {$4$};
\draw[] (4.00,-0.10) -- (4.00,0.10);
\node[anchor=north, scale=0.8] (bottom5) at (5.00,-0.10) {$5$};
\draw[] (5.00,-0.10) -- (5.00,0.10);
\node[anchor=north, scale=0.8] (bottom6) at (6.00,-0.10) {$6$};
\draw[] (6.00,-0.10) -- (6.00,0.10);
\node[anchor=north, scale=0.8] (bottom7) at (7.00,-0.10) {$i=7$};
\draw[] (7.00,-0.10) -- (7.00,0.10);
\node[anchor=north, scale=0.8] (bottom8) at (8.00,-0.10) {$8$};
\draw[] (8.00,-0.10) -- (8.00,0.10);
\node[anchor=north, scale=0.8] (bottom9) at (9.00,-0.10) {$9$};
\draw[] (9.00,-0.10) -- (9.00,0.10);
\node[anchor=north, scale=0.8] (bottom10) at (10.00,-0.10) {$10$};
\draw[] (10.00,-0.10) -- (10.00,0.10);
\node[anchor=north, scale=0.8] (bottom11) at (11.00,-0.10) {$11$};
\draw[] (11.00,-0.10) -- (11.00,0.10);
\node[anchor=north, scale=0.8] (bottom12) at (12.00,-0.10) {$12$};
\draw[] (12.00,-0.10) -- (12.00,0.10);
\node[anchor=east, scale=0.8] (left0) at (-0.10,0.00) {$0$};
\draw[] (-0.10,0.00) -- (0.10,0.00);
\node[anchor=east, scale=0.8] (left1) at (-0.10,1.00) {$1$};
\draw[] (-0.10,1.00) -- (0.10,1.00);
\node[anchor=east, scale=0.8] (left2) at (-0.10,2.00) {$2$};
\draw[] (-0.10,2.00) -- (0.10,2.00);
\node[anchor=east, scale=0.8] (left3) at (-0.10,3.00) {$j=3$};
\draw[] (-0.10,3.00) -- (0.10,3.00);
\node[anchor=east, scale=0.8] (left4) at (-0.10,4.00) {$4$};
\draw[] (-0.10,4.00) -- (0.10,4.00);
\node[anchor=east, scale=0.8] (left5) at (-0.10,5.00) {$m=5$};
\draw[] (-0.10,5.00) -- (0.10,5.00);
\node[anchor=north west] (i) at (13.00,-0.10) {$i$};
\node[anchor=south east] (j) at (-0.10,7.00) {$j$};
\draw[line width=1.5pt] (7.00,-1.00) -- (6.00,-1.00);
\draw[line width=1.5pt] (6.00,-1.00) -- (6.00,0.00);
\draw[line width=1.5pt] (6.00,0.00) -- (5.00,0.00);
\draw[line width=1.5pt] (5.00,0.00) -- (5.00,1.00);
\draw[line width=1.5pt] (5.00,1.00) -- (4.00,1.00);
\draw[line width=1.5pt] (4.00,1.00) -- (4.00,2.00);
\draw[line width=1.5pt] (4.00,2.00) -- (3.00,2.00);
\draw[line width=1.5pt] (3.00,2.00) -- (3.00,3.00);
\draw[line width=1.5pt] (7.00,7.00) -- (6.00,7.00);
\draw[line width=1.5pt] (6.00,7.00) -- (6.00,6.00);
\draw[line width=1.5pt] (6.00,6.00) -- (5.00,6.00);
\draw[line width=1.5pt] (5.00,6.00) -- (5.00,5.00);
\draw[line width=1.5pt] (5.00,5.00) -- (4.00,5.00);
\draw[line width=1.5pt] (4.00,5.00) -- (4.00,4.00);
\draw[line width=1.5pt] (4.00,4.00) -- (3.00,4.00);
\draw[line width=1.5pt] (3.00,4.00) -- (3.00,3.00);
\draw[line width=1.5pt] (7.00,-1.00) -- (8.00,-1.00);
\draw[line width=1.5pt] (8.00,-1.00) -- (8.00,0.00);
\draw[line width=1.5pt] (8.00,0.00) -- (9.00,0.00);
\draw[line width=1.5pt] (9.00,0.00) -- (9.00,1.00);
\draw[line width=1.5pt] (9.00,1.00) -- (10.00,1.00);
\draw[line width=1.5pt] (10.00,1.00) -- (10.00,2.00);
\draw[line width=1.5pt] (10.00,2.00) -- (11.00,2.00);
\draw[line width=1.5pt] (11.00,2.00) -- (11.00,3.00);
\draw[line width=1.5pt] (7.00,7.00) -- (8.00,7.00);
\draw[line width=1.5pt] (8.00,7.00) -- (8.00,6.00);
\draw[line width=1.5pt] (8.00,6.00) -- (9.00,6.00);
\draw[line width=1.5pt] (9.00,6.00) -- (9.00,5.00);
\draw[line width=1.5pt] (9.00,5.00) -- (10.00,5.00);
\draw[line width=1.5pt] (10.00,5.00) -- (10.00,4.00);
\draw[line width=1.5pt] (10.00,4.00) -- (11.00,4.00);
\draw[line width=1.5pt] (11.00,4.00) -- (11.00,3.00);
\draw [rounded corners, line width=0.5pt] (4.025,4.025) rectangle (5.975,4.975); \draw [rounded corners, line width=0.5pt] (3.025,3.025) rectangle (4.975,3.975); \draw [rounded corners, line width=0.5pt] (3.025,2.025) rectangle (4.975,2.975); \draw [rounded corners, line width=0.5pt] (4.025,1.025) rectangle (5.975,1.975); \draw [rounded corners, line width=0.5pt] (5.025,0.025) rectangle (6.975,0.975); \draw [rounded corners, line width=0.5pt] (7.025,0.025) rectangle (8.975,0.975); \draw [rounded corners, line width=0.5pt] (7.025,3.025) rectangle (8.975,3.975); \draw [rounded corners, line width=0.5pt] (7.025,4.025) rectangle (8.975,4.975); \draw [rounded corners, line width=0.5pt] (6.025,3.025) rectangle (6.975,4.975); \draw [rounded corners, line width=0.5pt] (5.025,2.025) rectangle (5.975,3.975); \draw [rounded corners, line width=0.5pt] (6.025,1.025) rectangle (6.975,2.975); \draw [rounded corners, line width=0.5pt] (7.025,1.025) rectangle (7.975,2.975); \draw [rounded corners, line width=0.5pt] (8.025,1.025) rectangle (8.975,2.975); \draw [rounded corners, line width=0.5pt] (9.025,1.025) rectangle (9.975,2.975); \draw [rounded corners, line width=0.5pt] (10.025,2.025) rectangle (10.975,3.975); \draw [rounded corners, line width=0.5pt] (9.025,3.025) rectangle (9.975,4.975); \node[anchor=south west] (TAZ) at (8.00,7.00) {$\AZ(i,j,\ell)$};
\tikzset{myptr/.style={decoration={markings,mark=at position 1 with %
    {\arrow[scale=1,>=stealth]{>}}},postaction={decorate}}}
\draw [fill=white,opacity=0.3,draw=none] (-1.0,-1.0) rectangle (14.0,8.0); \node[draw,fill=black,circle,scale=0.8] (n3x4) at (3.50,4.50) { };
\node[anchor=east,scale=1.0] (u1) at (3.50,4.50) {$\ut_1$};
\node[draw,fill=black,circle,scale=0.8] (n5x4) at (5.50,4.50) { };
\draw[myptr,line width=2pt,blue] (n3x4) -- (n5x4);
\node[draw,fill=black,circle,scale=0.8] (n6x3) at (6.50,3.50) { };
\draw[myptr,line width=2pt,blue] (n5x4) -- (n6x3);
\node[draw,fill=black,circle,scale=0.8] (n8x3) at (8.50,3.50) { };
\draw[myptr,line width=2pt,blue] (n6x3) -- (n8x3);
\node[draw,fill=black,circle,scale=0.8] (n9x4) at (9.50,4.50) { };
\draw[myptr,line width=2pt,blue] (n8x3) -- (n9x4);
\node[draw,fill=black,circle,scale=0.8] (n2x3) at (2.50,3.50) { };
\node[anchor=north,inner sep=9pt,scale=1.0] (v2) at (9.50,4.50) {$\vt_1$};
\node[anchor=east,scale=1.0] (u2) at (2.50,3.50) {$\ut_0$};
\node[draw,fill=black,circle,scale=0.8] (n4x3) at (4.50,3.50) { };
\draw[myptr,line width=2pt,blue] (n2x3) -- (n4x3);
\node[draw,fill=black,circle,scale=0.8] (n5x2) at (5.50,2.50) { };
\draw[myptr,line width=2pt,blue] (n4x3) -- (n5x2);
\node[draw,fill=black,circle,scale=0.8] (n6x1) at (6.50,1.50) { };
\draw[myptr,line width=2pt,blue] (n5x2) -- (n6x1);
\node[draw,fill=black,circle,scale=0.8] (n7x2) at (7.50,2.50) { };
\draw[myptr,line width=2pt,blue] (n6x1) -- (n7x2);
\node[draw,fill=black,circle,scale=0.8] (n8x1) at (8.50,1.50) { };
\draw[myptr,line width=2pt,blue] (n7x2) -- (n8x1);
\node[draw,fill=black,circle,scale=0.8] (n9x2) at (9.50,2.50) { };
\draw[myptr,line width=2pt,blue] (n8x1) -- (n9x2);
\node[draw,fill=black,circle,scale=0.8] (n10x3) at (10.50,3.50) { };
\draw[myptr,line width=2pt,blue] (n9x2) -- (n10x3);
\node[anchor=north,inner sep=9pt,scale=1.0] (u2) at (10.50,3.50) {$\vt_0$};
\end{tikzpicture}}

	\caption{\label{fig:together} The $r$-path $\bij(T)$ where $T$ is the domino tiling from Figure~\ref{fig:domino}.}
\end{figure}
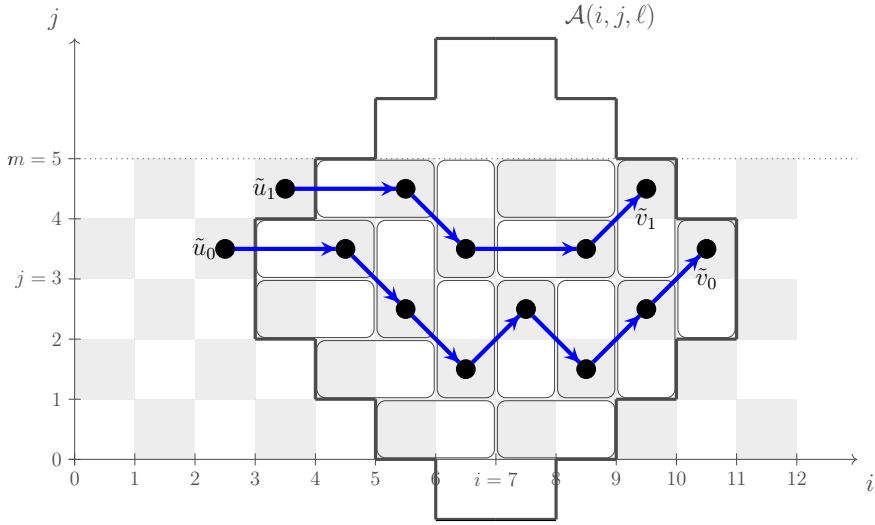

Let us describe the vertices $\ubft,\vbft$ explicitly as functions of $i,j,\ell$. By symmetry, we may assume that $i+j-\ell$ is even. Then the square $S$ with center $(i-\ell+1/2,j+1/2)$ on the boundary of $\TAZ(i,j,\ell)$ is white. Let $r$ be the minimum of $m-j$ and $\ell$. Then it is easy to check that the start $\ubft=(\ut_0,\ut_1,\dots,\ut_{r-1})$ and end $\vbft=(\vt_0,\vt_1,\dots,\vt_{r-1})$ of $\bij(T)$ are given by
\begin{equation}\label{eq:ut_k_vt_k}
\ut_k=(i-\ell+k-1/2,j+k+1/2),\quad \vt_k=(i+\ell-k-1/2,j+k+1/2)
\end{equation}
for $k=0,1,\dots,r-1$. In particular, the difference $\vt_k-\ut_k$ equals $(2\ell-2k,0)$.

\def\mon{\tau}
\begin{theorem}\label{thm:domino}
	For any lattice point $(i,j)$ of $\TS_m$ and any integer $\ell_0$, the sequence $f:\N\to\Z[\x]$ given by 
	\[f(\ell)=\pol(i+\ell,j,\ell n+\ell_0)\]
	satisfies a linear recurrence with characteristic polynomial $Q_{N_{n,m}}^\plee{m-j}(t)$ for all but finitely many values of $\ell$. Here the polynomial $Q_{N_{n,m}}(t)$ is given by~\eqref{eq:Q_N_planar_intro}.
\end{theorem}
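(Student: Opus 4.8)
The plan is to transport the sum $f(\ell)=\pol(i+\ell,j,\ell n+\ell_0)$ across the bijection $\bij$ into the planar network $\Nt_{n,m}$ and then apply the planar half of Theorem~\ref{thm:recurrences} (that is, Theorem~\ref{thm:tuples_plee}). First I would restrict to $\ell$ large enough that $r:=\min(m-j,\ell)$ stabilizes to $m-j$; this is the source of the ``all but finitely many $\ell$'' clause. For such $\ell$, the construction of $\bij$ together with the endpoint formula~\eqref{eq:ut_k_vt_k} (with center $i+\ell$ and radius $\ell n+\ell_0$) identifies the domino tilings of $\TAZ(i+\ell,j,\ell n+\ell_0)$ with vertex-disjoint $(m-j)$-paths in $\Nt_{n,m}$ from an $r$-vertex $\ubft(\ell)$ to an $r$-vertex $\vbft(\ell)$, both living in the top $m-j$ rows and listed in the same order. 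Because $\Nt_{n,m}$ is planar and the two $r$-vertices appear in the same linear order, $\ubft(\ell)$ and $\vbft(\ell)$ are non-permutable, so Theorem~\ref{thm:LGV} lets me write the weighted count of these $r$-paths as $\det\LGV(\ubft(\ell),\vbft(\ell))=\sum_{\sigma\in\Sfr_{m-j}}\sgn(\sigma)\,\ee(\ubft(\ell),\sigma\vbft(\ell))$, which is precisely the shape of the sequence appearing in Theorem~\ref{thm:recurrences}.

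Second, I would pin down the relation between the Speyer weight $\wt(T)$ of a tiling and the network weight $\wt(\bij(T))$ of the matching path. As in the lozenge computation culminating in~\eqref{eq:q}, I would argue that the elementary domino flip connecting any two tilings (the analogue of Figure~\ref{fig:local_move}) multiplies $\wt(T)$ and $\wt(\bij(T))$ by the same factor, so it suffices to compare the two weights on a single reference tiling. Since every edge weight of $\Nt_{n,m}$ is a ratio $w_{\ast}/w_{\ast}$, the path weight telescopes, and the boundary normalization $w_{i,0}=w_{i,m}=1$ annihilates the boundary terms; the leftover discrepancy is a monomial $\tau$ in $\x$ determined by the fixed endpoints only. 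Thus $f(\ell)=\tau\cdot\det\LGV(\ubft(\ell),\vbft(\ell))$. Here I would be careful to check that $\tau$ carries no factor growing geometrically in $\ell$, as this is exactly what makes the characteristic polynomial come out as $Q_{N_{n,m}}^\plee{m-j}(t)$ itself rather than a rescaling $Q_{N_{n,m}}^\plee{m-j}(t\mu)$, which is the form that occurred in the lozenge application.

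Third, I would invoke Theorem~\ref{thm:tuples_plee}, which applies since $\Nt_{n,m}$ is planar and therefore local by Proposition~\ref{prop:local}. The key geometric fact is that, although both $\ubft(\ell)$ and $\vbft(\ell)$ move, their displacement $\vt_k(\ell)-\ut_k(\ell)=(v_0-u_0)+\ell\g$ increases by exactly one copy of $\g=(2n,0)$ at each step. Using the $\g$-shift-invariance $\ee(\cdot+\g,\cdot+\g)=\ee(\cdot,\cdot)$ I would normalize the data into the standard form ``fixed start, end shifted by $\ell\g$'' demanded by Theorem~\ref{thm:tuples_plee}. Concretely, with the local factorization $\Id-B(t)=(\Id-C)(\Id-tS)$ and $A^{\parr\ell}=S^\ell(\Id-C)^{-1}$ from that proof, $\det\LGV(\ubft(\ell),\vbft(\ell))$ becomes, up to an $\ell$-independent shift of the exponent, a fixed entry of $(\Wedge{m-j}{S})^{\ell}\,\Wedge{m-j}{(\Id-C)^{-1}}$. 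Since $Q_{N_{n,m}}(t)$ is the characteristic polynomial of $S$, Proposition~\ref{prop:wedge} identifies the characteristic polynomial of $\Wedge{m-j}{S}$ with $Q_{N_{n,m}}^\plee{m-j}(t)$, and Proposition~\ref{prop:rational} then delivers the recurrence.

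I expect the main obstacle to be the normalization in the third step rather than the combinatorics of the first two. The centers of the diamonds drift by $(1-n,0)$, which is not a multiple of $\g$, so the projections of $\ubft(\ell)$ and $\vbft(\ell)$ to the cylinder $N_{n,m}$ are not constant in $\ell$. The point that must be established with care is that these two projections drift in lockstep, so that the relative configuration on the cylinder is fixed and only the winding number grows linearly; only then does the shifted sequence reduce cleanly to a single matrix power $(\Wedge{m-j}{S})^{\ell}$ acting between $\ell$-independent vectors. This bookkeeping is a mild strengthening of the reduction from the general case to the lifted case already carried out in the proof of Theorem~\ref{thm:tuples_plee}, and combining it with the weight analysis of the second step is where the real work lies.
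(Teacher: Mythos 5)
You have reproduced the paper's own proof strategy: your three steps are precisely the paper's claims that (i) the tilings of $\Acal_m(i+\ell,j,\ell n+\ell_0)$ biject via $T\mapsto\Pbft(T)$ with $r$-paths whose endpoints are read off from \eqref{eq:ut_k_vt_k}, (ii) the endpoint $r$-vertices are non-permutable so the determinant of Theorem~\ref{thm:LGV} is the plain weighted count, and (iii) $\wt(T)=\tau\,\wt(\Pbft(T))$ for a fixed monomial $\tau$, proved by flip-connectivity plus one reference tiling; after that both you and the paper appeal to Proposition~\ref{prop:local} and Theorem~\ref{thm:tuples_plee}. So the route is the same, and for $n=1$ your write-up is essentially complete.

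The gap is in your third step, exactly where you yourself locate ``the real work,'' and it is not closed by what you propose. Substituting into \eqref{eq:ut_k_vt_k}, the starts $\ut_k(\ell)$ move by $(1-n,0)$ and the ends $\vt_k(\ell)$ by $(1+n,0)$ per unit of $\ell$, so the data is in the form demanded by Theorem~\ref{thm:tuples_plee} (fixed start, end translated by $\ell\g$) precisely when $n=1$. For $n\geq 2$ the normalization ``by $\g$-shift-invariance'' is impossible: the translation that would carry $(\ubft(0),\vbft(0)+\ell\g)$ to $(\ubft(\ell),\vbft(\ell))$ is by $\ell(1-n,0)$, which is not in $\Z\g$, is not weight-preserving (the weights have exact period $2n$), and for even $n$ does not even preserve the black/white coloring, so consecutive $\ell$ fall into mirror-image cases of \eqref{eq:ut_k_vt_k}. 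Controlling the relative displacement, as you do, is not enough, because the edge weights see absolute position modulo $2n$. What shift-invariance genuinely gives is that along each class $\ell\equiv c\pmod{2n}$ the sequence is of the standard form, hence each such subsequence satisfies a recurrence whose characteristic roots are the $2n$-th powers $\rho^{2n}$ of the roots $\rho$ of $Q_{N_{n,m}}^{\plee{m-j}}(t)$; but interleaving $2n$ such subsequences only yields the characteristic polynomial $\prod_\rho\bigl(t^{2n}-\rho^{2n}\bigr)$, strictly weaker than the claim (two interleaved constant sequences satisfy $t^2-1$, not $t-1$). Your lockstep/conjugation remark has the same defect: conjugate transfer matrices make the exponentials $\rho^\ell$ common to all classes, but the coefficients multiplying $\rho^\ell$ then come out only \emph{periodic} in $\ell$, and the theorem needs them \emph{constant}; that requires additionally intertwining the $\ell$-independent boundary vectors across classes (and likewise the monomial $\tau$, whose surviving boundary factors sit at positions drifting by $1\pm n$ per step, is a priori only periodic in $\ell$). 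This missing intertwining is the actual content of the theorem for $n\geq 2$; be aware that the paper's own proof elides it as well, since its claim (i) is supported by \eqref{eq:ut_k_vt_k} only when $n=1$, so your instinct about where the difficulty sits is correct even though your proposal does not resolve it.
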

\begin{proof}
	We will deduce the result from the second part of Theorem~\ref{thm:recurrences}. In order to do so, we need to show that 
	\begin{enumerate}[(i)]
		\item there exist vertices $\ubft$ and $\vbft_0$ such that for any domino tiling $T$ of $\TAZ(i+\ell,j,\ell n+\ell_0)$, the $r$-path $\bij(T)$ starts at $\ubft$ and ends at $\vbft_\ell:=\vbft_0+\ell\g$.
		\item the only permutation $\sigma\in\Sfr_r$ such that there exists an $r$-path from $\ubft$ to $\sigma\vbft_\ell$ is the identity;
		\item there is a monomial $\mon$ in $\x^{\pm1}$ that depends only on $i,j,\ell_0$ such that we have $\wt(T)=\mon \wt(\bij(T))$.
	\end{enumerate}
	The first claim follows from~\eqref{eq:ut_k_vt_k}. The second claim is obvious by inspection. The third claim can be proved as follows. First note that by Thurston's theorem~\cite{Th}, all domino tilings of $\TAZ(i+\ell,j,\ell n+\ell_0)$ are connected by flips and that if $T'$ is obtained from $T$ by applying a flip then $\wt(\bij(T))=\wt(\bij(T'))$ which is easy to check directly. Thus we can put $\mon$ to be the ratio of weights of $T_0$ and $\bij(T_0)$ for some fixed tiling $T_0$ of $\TAZ(i+\ell,j,\ell n+\ell_0)$. Let $T_0$ be the unique tiling that consists entirely of horizontal dominoes. Then $\wt(T_0)=1$ while $\wt(\bij(T_0))$ only contains the vertices that belong to the boundary of $\TAZ(i+\ell,j,\ell n+\ell_0)$ and therefore does not depend on $\ell$. Thus $\mon$ does not depend on $\ell$ either and we are done with the proof.
\end{proof}

It remains to note that one can give a nice combinatorial interpretation to the coefficients of $Q_{N_{n,m}}(t)$ in terms of \emph{domino tilings of the cylinder}. Namely, by~\eqref{eq:Q_N_planar_intro}, the coefficient of $(-t)^{d-r}$ is the sum $H_r$ of weights of all $r$-cycles in $N_{n,m}$. Applying the inverse of the local rules in Figure~\ref{fig:local_rules} to any such $r$-cycle yields a domino tiling of the cylinder with \emph{Thurston height} equal to $r$. Thus $H_r$ can be interpreted as the sum of weights of all domino tilings of the cylinder with a fixed Thurston height. We refer the reader to~\cite[Section~3]{GP2} for the details.

\begin{remark}
	One can extend this approach to more general regions inside $\TS_m$. Namely, let $R$ be a region inside $\TS_m$ that lies between two paths $P_l$ and $P_r$ that connect the upper boundary of $\TS_m$ with the lower boundary of $\TS_m$ and consist of left, right, and down steps. Define the region $R_\ell$ to be the region between $P_l$ and $P_r+\ell\g$ for all $\ell\geq 0$. Suppose in addition that there exists a domino tiling of $R_\ell$ for each sufficiently large $\ell$. Then the sum $f(\ell)$ of weights of domino tilings of $R_\ell$ satisfies a linear recurrence with characteristic polynomial $Q_{N_{n,m}}^\plee{r}(t)$ for some $r\geq 0$. The proof is analogous to that of Theorem~\ref{thm:domino}.
\end{remark}

\begin{remark}
	If $n=1$ then $N$ is an $m\times 2$ cylinder. There are $2m-1$ cycles $C_1,C_2,\dots,C_{2m-1}$ in $N$ labeled in weakly increasing order (on the cylinder) and just as in the previous section, $C_i$ and $C_j$ are vertex disjoint if and only if $|i-j|>1$. Thus the total number of $r$-cycles in $N$ will be the $2m-1$'th Fibonacci number $F_{2m-1}$. Hence the polynomial $Q_{N_{1,m}}(t)$ has $F_{2m-1}$ terms, however, it is a multivariate polynomial in $\x$ unlike the polynomial $F_n(t)$ in the previous section. 
\end{remark}


\subsection{Other applications}
Many other objects correspond to $r$-paths in various planar cylindrical networks. We briefly list several important examples and refer the reader to specific places where the corresponding bijections are described in the literature.

\begin{enumerate}
	\item \emph{Vicious walkers between two walls}, see~\cite[Figure~1]{VW3}. 
	\item \emph{$Q$-Schur functions}, see~\cite[Figure~4a]{StembridgeQ}
	\item \emph{Super-Schur functions}, see~\cite[Figure~1]{BrentiS}.
	\item \emph{Cube recurrence in a cylinder}. In our work in progress~\cite{GPCube}, we give a way to transform a formula from~\cite{CS} in the language of $r$-paths in a certain network.
	\item \emph{States of the six vertex model} $\leftrightarrow$ \emph{cylindric packed loops}, see~\cite[Figures~11,~19]{ZJ}. Note that both of these objects are in bijection with domino tilings of the Aztec diamond.
\end{enumerate}

\begin{remark}
 In order to get a cylindrical network for domino tilings and related objects, we had to restrict the Aztec diamond to the strip $\TS_m$. Note that however for the remaining four items in the above list, the underlying network is naturally cylindrical and there is no need to impose further restrictions on it. Thus a linear recurrence result for them is an immediate consequence of the second part of Theorem~\ref{thm:recurrences}.
\end{remark}

\def\H{\Hcal}
\section{Conjectures}\label{sect:conjectures}
In this section, we give some additional conjectures for the case when $\Nt$ is a planar cylindrical network. Let us denote 
\[H_r=\sum_{\Cbf\in\Cyc^r(N)} \wt(\Cbf),\]
so that $Q_N(t)=\sum_{r=0}^d (-t)^{d-r} H_r$. In particular, we set $H_r=0$ for $r>d$. 

We say that a sequence $(H_0,H_1,\dots)$ of polynomials is a \emph{P\'olya frequency sequence} if all minors of the following \emph{infinite Toeplitz matrix} $\H=(\H_{ij})_{i,j\geq1}$ defined by
\[\H_{ij}= \begin{cases}
	      H_{j-i},&\text{ if $j\geq i$};\\
	      0,&\text{ if $j<i$};
           \end{cases}\quad \H=\begin{pmatrix}
                                	H_0 & H_1 & H_2 & \dots\\
                                	0 & H_0 & H_1 & \dots\\
                                	0 & 0 & H_0 & \dots\\
                                	\vdots &\vdots & \vdots & \ddots
                                \end{pmatrix}\]
are polynomials in $\x$ with nonnegative integer coefficients (in other words, the matrix $\H$ is required to be \emph{totally positive}). For example, the fact that all $1\times 1$ minors have nonnegative coefficients means that each $H_i$ has nonnegative coefficients, and the fact that all $2\times 2$ minors have nonnegative coefficients implies that the sequence $(H_0,H_1,\dots)$ is \emph{strongly log-concave} meaning that the polynomial $H_i^2-H_{i+1}H_{i-1}$ has nonnegative coefficients for each $i>0$.

\begin{conjecture}\label{conj:Polya}
	The polynomials $H_0,H_1,\dots$ form a P\'olya frequency sequence.
\end{conjecture}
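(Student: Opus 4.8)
The plan is to deduce Conjecture~\ref{conj:Polya} from the coefficient-wise total nonnegativity of the transfer matrix $S$ attached to $\Nt$, interpreting every minor of $\H$ through the Lindstr\"om--Gessel--Viennot method so that it becomes a weighted count of vertex-disjoint path families, hence a polynomial in $\x$ with nonnegative coefficients.

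First I would record the algebraic reformulation. By Proposition~\ref{prop:local}, a planar cylindrical network $\Nt$ is local, so we may fix a local lifting $\Lift=\{\liftt_1,\dots,\liftt_p\}$ and form the matrix $S=(\Id-C)^{-1}D$ of~\eqref{eq:S}. By~\eqref{eq:det_S}, the polynomial $Q_N(t)$ equals $\det(t\Id-S)$ up to a power of $t$, so the roots $\gamma_1,\dots,\gamma_d$ are exactly the nonzero eigenvalues of $S$, and comparing with $Q_N(t)=(t-\gamma_1)\cdots(t-\gamma_d)$ gives $H_r=e_r(\gamma_1,\dots,\gamma_d)$ (as confirmed by the Schur and lozenge examples, where $H_r=e_r(x_1^m,\dots,x_n^m)$ and $H_r=\sum q^{i_1+\dots+i_r}$, respectively). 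Consequently the generating function of the sequence is
\[G(z):=\sum_{r\ge0}H_rz^r=\prod_{i=1}^d(1+z\gamma_i)=\det(\Id+zS),\]
and the Toeplitz matrix $\H$ is precisely the Toeplitz matrix of the coefficients of $\det(\Id+zS)$. Thus the whole conjecture becomes a statement about a single matrix $S$.

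Next I would prove that $S$ is \emph{coefficient-wise totally nonnegative}, i.e.\ every minor of $S$ is a polynomial in $\x$ with nonnegative coefficients. Order the lifts $\liftt_1,\dots,\liftt_p$ by their position along a meridian of the cylinder. Cutting $\TC$ open along this meridian turns the portion of $\Nt$ lying between $\Lift$ and $\Lift+\g$ into a finite \emph{planar} acyclic network $\Gamma$ drawn in a rectangle, with sources $\liftt_i$ on the left edge and sinks $\liftt_j+\g$ on the right edge appearing in the same vertical order. The $(i,j)$ entry of $S$ is the weighted sum of directed paths of $\Gamma$ from $\liftt_i$ to $\liftt_j+\g$ (those crossing the meridian only on their last edge), so $S$ is a boundary-measurement matrix of $\Gamma$. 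Planarity and acyclicity of $N$ force the sources and sinks to be non-permutable in this order, so the Lindstr\"om--Gessel--Viennot lemma (Theorem~\ref{thm:LGV}) identifies each minor $\det S_{I,J}$ with the weighted number of families of vertex-disjoint paths in $\Gamma$, a polynomial in $\x$ with nonnegative integer coefficients. Hence $S$ is coefficient-wise totally nonnegative.

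Finally I would pass from total nonnegativity of $S$ to the P\'olya frequency property of $(H_0,H_1,\dots)$. Over $\R$ this implication is classical: a minor of the Toeplitz matrix of the coefficients of $\det(\Id+zS)$ equals a skew Schur polynomial $s_{\lambda/\mu}(\gamma_1,\dots,\gamma_d)$ in the eigenvalues of $S$ by the dual Jacobi--Trudi identity, and such an expression is a nonnegative combination of minors of the compound matrices $\Wedge{k}{S}$, each of which is again totally nonnegative. \textbf{The main obstacle} is to upgrade this numerical total positivity to coefficient-wise positivity over $\Z[\x]$: one must exhibit each Toeplitz minor as a manifestly nonnegative expression in $\x$, not merely as a quantity that is nonnegative under every real specialization of $\x$. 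The route I would pursue is to bypass the eigenvalue detour and construct a single planar acyclic network whose source-to-sink generating functions realize the entries $H_{j-i}$ directly, so that Theorem~\ref{thm:LGV} produces every minor of $\H$ as an honest nonnegative count of vertex-disjoint path families. Building such a ``cylindric LGV gadget'' — one that linearizes the cyclic symmetry of $N$ by trading disjoint winding-one cycles for genuine non-intersecting paths between two stacks of terminals — is the crux of the problem and is presumably why the statement remains only a conjecture.
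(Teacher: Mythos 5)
This statement is Conjecture~\ref{conj:Polya} of the paper: the authors do not prove it. They only reformulate it as Conjecture~\ref{conj:Schur_Polya} (via the dual Jacobi--Trudi identity and the Littlewood--Richardson rule, minors of $\Hcal$ being the images $\psi(s_{\lambda/\mu})$) and record consequences such as Conjecture~\ref{conj:roots}. So your proposal has to stand on its own as a proof, and it does not: it has two genuine gaps, one of which you concede yourself, and that concession is already fatal. Your third step operates entirely after specializing $\x$ to positive reals: total nonnegativity of the real matrix $S$ would give nonnegative real eigenvalues $\gamma_1,\dots,\gamma_d$, hence nonnegative real values $s_{\lambda/\mu}(\gamma_1,\dots,\gamma_d)$ for every minor of $\Hcal$, for each such specialization. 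But the conjecture asserts the strictly stronger statement that these minors have nonnegative \emph{coefficients} as polynomials in $\x$; a polynomial can be positive on the entire positive orthant while having negative coefficients, so no amount of specialization positivity implies the conjecture. The ``cylindric LGV gadget'' you invoke to bridge this is exactly the missing mathematical content; saying its construction ``is the crux of the problem'' is an accurate diagnosis, not a proof.

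The second gap is in the step you present as established. The claim that $S=(\Id-C)^{-1}D$ is totally nonnegative is, after specializing $\x$ to positive reals, essentially Conjecture~\ref{conj:tp_matrix} of the paper, which the authors state as open (crediting the idea to Stanley) merely as a proposed route to the weaker Conjecture~\ref{conj:roots}; you are claiming to prove in passing, and coefficient-wise, something the paper leaves as a conjecture. Your LGV justification does not hold up: the local lift $L=\{\tilde y_1,\dots,\tilde y_p\}$ produced by Proposition~\ref{prop:local} is a level set $z^{-1}(0)$ of a height function, with one vertex above each vertex of $N$; its elements are in general \emph{interior} vertices of the cut-open fundamental domain, not points on a meridian or on the boundary of a disk, and the same is true of the sinks in $L+\g$. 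Non-permutability in the Lindstr\"om--Gessel--Viennot method is a consequence of sources and sinks lying on the boundary of a planar network in compatible order; with interior terminals, planarity and acyclicity alone do not exclude vertex-disjoint families realizing non-identity permutations, so the signed sum in Theorem~\ref{thm:LGV} computing a minor of $S$ is not manifestly nonnegative. (Topology does give partial information---for instance, a disjoint pair of paths joining two sources to each other's shifted sinks would project to a simple closed curve of winding number $2$ on the cylinder, which cannot exist---but such arguments cover only special minors and appear nowhere in your write-up.) Until both gaps are filled, what you have is a plausible strategy consistent with the implications the paper already records, not a proof.
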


Let $\Lambda$ be the ring of symmetric functions (see~\cite[Chapter~7]{EC2}), and consider the ring homomorphism $\psi:\Lambda\to \Z[\x]$ defined by 
\[\psi(e_r)=H_r,\quad r\geq 1.\]
In other words, $\psi(f)$ is obtained from $f\in\Lambda$ by specializing it to the roots of $Q_N(t)$.

By the dual Jacobi-Trudi identity~\cite[Corollary~7.16.2]{EC2}, the image $\psi(s_\l)$ of a Schur function $s_\l$ is given by a row-solid minor of $\H$. Arbitrary minors of $\H$ are images $\psi(s_{\l/\m})$ of skew-Schur functions and thus by the Littlewood-Richardson rule~\cite[Section~A1.3]{EC2} are nonnegative integer combinations of the row-solid minors of $\H$. Thus Conjecture~\ref{conj:Polya} can be equivalently stated as follows:

\begin{conjecture}\label{conj:Schur_Polya}
	The images $\psi(s_\l)$ of Schur functions are polynomials in $\Z[\x]$ with nonnegative coefficients.
\end{conjecture}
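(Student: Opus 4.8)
The plan is to deduce Conjecture~\ref{conj:Schur_Polya} from the stronger total positivity statement, Conjecture~\ref{conj:Polya}, and then to attack the latter by the Lindström--Gessel--Viennot method. As already noted in the excerpt, the dual Jacobi--Trudi identity gives $\psi(s_\lambda)=\det\bigl(H_{\lambda'_i-i+j}\bigr)$ as a row-solid minor of the Toeplitz matrix $\H$, while every minor of $\H$ equals $\psi(s_{\lambda/\mu})$ for a skew shape $\lambda/\mu$; since skew Schur functions expand positively into straight ones by the Littlewood--Richardson rule, the two conjectures are equivalent. So it suffices to prove that \emph{every} minor of $\H$ is a polynomial in $\x$ with nonnegative coefficients, i.e.\ that $\H$ is totally positive over $\Z[\x]$.

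First I would try to realize $\H$ as the path matrix of a planar acyclic network with nonnegative edge weights. If one can construct a planar directed graph $G$ with sources $a_1,a_2,\dots$ and sinks $b_1,b_2,\dots$ arranged along the boundary of a disk so that the weight generating function of directed paths from $a_i$ to $b_j$ equals $H_{j-i}$, then the Lindström--Gessel--Viennot lemma expresses each minor of $\H$ as a manifestly nonnegative sum over families of pairwise non-intersecting paths, and total positivity is immediate. This is the standard ``planar $\Rightarrow$ totally positive'' mechanism, and the whole problem is thereby reduced to building $G$.

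The substance lies in this construction: a path of rise $r$ in $G$ must encode a choice of $r$ pairwise vertex-disjoint simple cycles in $N$, with weight equal to the product of their weights, since $H_r=\sum_{\Cbf\in\Cyc^r(N)}\wt(\Cbf)$. The natural approach is structural: cut the cylinder $\TC$ along a vertical arc and order the simple cycles of $N$ by the heights at which they cross it. Because each simple cycle is a non-self-intersecting loop of winding number $1$, two disjoint cycles cannot cross, so a pairwise disjoint family is linearly ordered and its members interleave along the cut; the plan is to convert this interleaving into a monotone lattice-path description and hence into $G$. In the worked examples this is transparent: for $\Nt_{n,m}$ (the Schur case) the cycles are literally the $n$ disjoint levels, and for the lozenge network $\Nt_m$ the compatibility relation ``$|i-j|>1$'' makes $\sum_r H_r z^r$ the independence polynomial of a path, whose Toeplitz matrix is visibly totally positive.

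The hard part will be carrying out this construction \emph{uniformly} for an arbitrary planar cylindrical network, where the simple cycles may form a complicated web of pairwise intersections, and it is not clear a priori that selecting $r$ pairwise-disjoint cycles with the correct multiplicity can be encoded by non-intersecting paths in a single planar auxiliary graph. One cannot circumvent this by factoring $\sum_r H_r z^r=\prod_i(1+\gamma_i z)$ into linear factors with nonnegative coefficients, since the roots $\gamma_i$ of $Q_N(t)$ are genuinely irrational over $\Q(\x)$ (already for $\Nt_4$, where $Q_{N_4}(t)=t^2-(1+q+q^2)t+q^2$); the positivity must be combinatorial, using the cycles themselves and never the individual $\gamma_i$. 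This is also why the operator reformulation is of little help directly: $\psi(s_\lambda)=\tr\bigl(\mathbb{S}_\lambda S\bigr)$ with $S=(\Id-C)^{-1}D$ having nonnegative entries, yet Schur functors do not preserve entrywise nonnegativity (already $\Wedger S$ has minors as entries), so the positivity of $H_r=\tr(\Wedger S)$ is ``global'' rather than entrywise. As a fallback I would attempt a direct determinantal argument, expanding $\det(H_{\lambda'_i-i+j})$ as a signed sum over tuples of cycle families with an attached permutation and constructing a weight-preserving sign-reversing involution, the obstruction again being to define the involution compatibly with vertex-disjointness across families. A natural first milestone, provable by a two-path Lindström--Gessel--Viennot argument, is the $2\times 2$ case---strong log-concavity of $(H_r)$, i.e.\ nonnegativity of $H_i^2-H_{i-1}H_{i+1}$---which already tests whether the geometry of disjoint cycles supports the needed cancellations.
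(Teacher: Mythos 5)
There is no proof to compare against here: in the paper, this statement is precisely Conjecture~\ref{conj:Schur_Polya}, left \emph{open}. The only thing the paper establishes about it is the equivalence with Conjecture~\ref{conj:Polya} (total positivity of the Toeplitz matrix $\H$), via the dual Jacobi--Trudi identity $\psi(s_\l)=\det\bigl(H_{\l'_i-i+j}\bigr)$, the identification of general minors of $\H$ with $\psi(s_{\lm})$, and the Littlewood--Richardson rule. Your first paragraph reproduces exactly this equivalence, so that part of your proposal is correct but adds nothing beyond what the paper already says in the surrounding text.

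Beyond that, your proposal is a research plan rather than a proof, and the gap is the entire mathematical content. The step you would need --- constructing, uniformly for an arbitrary planar cylindrical network, a planar acyclic graph $G$ whose path matrix is $\H$ (so that Lindstr\"om--Gessel--Viennot yields total positivity), or equivalently encoding every family of $r$ pairwise vertex-disjoint simple cycles of $N$ as a single path of rise $r$ in $G$ with the correct weight and multiplicity --- is exactly the open problem, and you explicitly leave it unresolved (``the hard part will be carrying out this construction uniformly''). The observation that disjoint winding-number-one cycles are non-crossing and hence linearly ordered is true but far from sufficient: distinct $r$-cycles can interleave in incompatible ways, and nothing you write shows the resulting combinatorics is realized by non-intersecting paths in one fixed planar graph. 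Your fallback (a sign-reversing involution on the expansion of $\det(H_{\l'_i-i+j})$) and your first milestone (log-concavity, the $2\times2$ case) are likewise stated as intentions, not carried out. So the proposal identifies a sensible line of attack, consistent with the examples in Sections~\ref{sect:Schur} and~\ref{sect:lozenge} where the cycle structure is trivial, but it does not prove the conjecture, which remains open in the paper as well.
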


In particular, Conjecture~\ref{conj:Schur_Polya} would imply that the coefficient of $(-1)^kt^{D-k}$ in $Q_N^\plee{r}(t)$ is a nonnegative polynomial in $\x$, where $D={d\choose r}$ is the degree of $Q_N^\plee{r}(t)$. This is the case since the plethysm $e_k[e_r]$ of two Schur positive functions $e_k$ and $e_r$ is again Schur positive, but its image $\psi(e_k[e_r])$ is the desired coefficient of $Q_N^\plee{r}(t)$.

\begin{conjecture}\label{conj:roots}
	Fix some $r\geq 1$ and substitute positive real numbers for the variables in $\x$. After such a substitution, the polynomials $Q_N^\pleh{r}$ and $Q_N^\plee{r}$ have positive real roots.
\end{conjecture}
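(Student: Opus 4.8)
The plan is to reduce the statement to the case $r=1$ and then apply the spectral theory of totally nonnegative matrices. By~\eqref{eq:plee_pleh_intro}, the roots of $Q_N^\plee{r}(t)$ are exactly the products $\gamma_{i_1}\gamma_{i_2}\cdots\gamma_{i_r}$ over all $1\le i_1<\dots<i_r\le d$, and the roots of $Q_N^\pleh{r}(t)$ are the analogous products over multisets $1\le i_1\le\dots\le i_r\le d$, where $\gamma_1,\dots,\gamma_d$ denote the roots of $Q_N(t)$. Since a product of positive real numbers is again a positive real number, it suffices to show that each $\gamma_j$ is a positive real after we substitute positive reals for $\x$. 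In other words, the whole conjecture follows from its $r=1$ case: \emph{$Q_N(t)$ has only positive real roots}.

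First I would realize the numbers $\gamma_j$ as eigenvalues of a single matrix. Since $\Nt$ is planar, it is local by Proposition~\ref{prop:local}; fixing a local lifting $\Lift$ and writing $B(t)=C+tD$ as in the proof of Theorem~\ref{thm:tuples_plee}, equation~\eqref{eq:det_S} gives $Q_\Lift(t)=\det(\Id-tS)$ for the transfer matrix $S=(\Id-C)^{-1}D$. Passing to $Q_N(t)=t^dQ_\Lift(1/t)$, one finds that $Q_N(t)$ equals the product of $(t-\lambda)$ over the nonzero eigenvalues $\lambda$ of $S$; in particular the roots $\gamma_1,\dots,\gamma_d$ of $Q_N(t)$ are precisely the nonzero eigenvalues of $S$. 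The problem is thus reduced to showing that every nonzero eigenvalue of $S$ is a positive real number.

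The key claim is that, for positive values of $\x$, the matrix $S$ is \emph{totally nonnegative}, i.e. all of its minors are nonnegative. I would prove this directly from planarity. The entry $S_{ij}$ is the weighted sum over all paths from $\liftt_i$ to $\liftt_j+\g$ that cross the cut separating the fundamental domain containing $\Lift$ from the one containing $\Lift+\g$ exactly once: a path wandering within the first domain, recorded by $(\Id-C)^{-1}$, followed by a single crossing edge, recorded by $D$. By the Lindström–Gessel–Viennot path-switching argument behind Theorem~\ref{thm:LGV}, a minor $\det(S_{I,J})$ equals the signed sum over families of pairwise vertex-disjoint such paths. Ordering $\Lift$ by the coordinate transverse to $\g$, the points where the paths of a vertex-disjoint family cross the cut are distinct and linearly ordered; planarity forbids the paths from crossing one another, so the sources $\liftt_i$ and the sinks $\liftt_j+\g$ must be matched in an order-preserving fashion. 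Consequently only one permutation survives in the expansion of $\det(S_{I,J})$, and the minor equals a sum of weights of vertex-disjoint families, which is nonnegative.

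Granting total nonnegativity, the conclusion is immediate: by the classical theorem of Gantmacher and Krein, every eigenvalue of a totally nonnegative matrix is real and nonnegative, so the nonzero eigenvalues of $S$ — that is, the roots $\gamma_j$ of $Q_N(t)$ — are strictly positive reals. The main obstacle is the total nonnegativity claim itself. The delicate points are the choice of a linear order on $\Lift$ that renders $\liftt_i$ and $\liftt_j+\g$ non-permutable, and the verification that no family contributing to a minor of $S$ realizes a nontrivial cyclic rotation — a configuration that is possible on a cylinder in general, but is excluded here because each contributing path meets the single cut exactly once, so that the crossing points inherit a genuine linear (rather than merely cyclic) order. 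Turning this into a rigorous cylindric analogue of Lindström's lemma is the step requiring the most care; note that Conjecture~\ref{conj:Schur_Polya}, if established, would provide independent evidence for the positivity of the spectrum of $S$.
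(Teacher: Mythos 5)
You are attempting to prove a statement that the paper itself leaves open: Conjecture~\ref{conj:roots} is a conjecture, not a theorem, and the paper offers no proof of it. What the paper does record is exactly the skeleton of your argument: the reduction to $r=1$ via~\eqref{eq:plee_pleh_intro}, the identification (through~\eqref{eq:S} and~\eqref{eq:det_S}) of the roots of $Q_N(t)$ with the nonzero eigenvalues of the transfer matrix $S$, the remark that total positivity of $S$ would finish the proof since totally positive matrices have positive real eigenvalues, and the alternative route through Conjecture~\ref{conj:Polya} via \cite[Theorem~4.5.3]{Brenti}. All of these steps in your proposal are sound and match the paper's outline.

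The genuine gap is your ``key claim'' that $S$ is totally nonnegative for positive values of $\x$: this is precisely the paper's Conjecture~\ref{conj:tp_matrix}, which the authors explicitly state as open (``One way to prove Conjecture~\ref{conj:roots} would be to show the following statement''). Your sketch of it does not go through as written. The LGV cancellation does express $\det(S_{I,J})$ as a signed sum over vertex-disjoint families, but the positivity step fails: the sources $\liftt_i$ and sinks $\liftt_j+\g$ are \emph{interior} vertices of the region between two cuts, not boundary vertices arranged along the cut, so the linear order on the points where the paths cross the cut does not control which source is matched to which sink. Disjointness of the curves therefore does not force an order-preserving matching, and vertex-disjoint families realizing a nontrivial permutation (contributing with negative sign) are not excluded by your argument; this is exactly the configuration that makes the cylindrical case hard and that the planar-boundary LGV argument behind Theorem~\ref{thm:LGV} cannot see. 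Note also that the paper's Conjecture~\ref{conj:tp_matrix} asserts only that \emph{some} local lift makes $S$ totally positive --- the choice of lift matters --- whereas your proposed ordering of $\Lift$ ``by the coordinate transverse to $\g$'' is not well defined for a two-dimensional fundamental domain and cannot substitute for that choice. As you yourself concede, turning this step into a rigorous cylindric Lindstr\"om lemma is the real difficulty; until that is done, the proposal is a restatement of the paper's proof strategy, not a proof.
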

Of course, by~\eqref{eq:plee_pleh_intro}, it is enough to prove Conjecture~\ref{conj:roots} for $r=1$. By~\cite[Theorem~4.5.3]{Brenti}, Conjecture~\ref{conj:roots} is a special case of Conjecture~\ref{conj:Polya}. One way to prove Conjecture~\ref{conj:roots} would be to show the follwing statement.

\begin{conjecture}\label{conj:tp_matrix}
	Substitute positive real numbers for the variables in $\x$. Then there exists a local lift $L$ such that the matrix $S$ defined by~\eqref{eq:S} is \emph{totally positive}, that is, all minors of $S$ are positive.
      \end{conjecture}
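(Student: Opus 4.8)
The plan is to realize $S$ as a boundary-to-boundary weighted path matrix of a planar acyclic network supported on a single fundamental domain of the cylinder, and then to deduce total positivity from planarity via the Lindstr\"om--Gessel--Viennot method. First I would fix the local lift coming from the height function $z\colon\Vt\to\Z$ constructed in Proposition~\ref{prop:local}, setting $L=z^{-1}(0)$ and ordering its vertices $\tilde y_1,\dots,\tilde y_p$ according to the cyclic order in which the cut $z^{-1}(0)$ meets the cylinder (equivalently, the linear order induced on a lift of this cut in the strip $\TS$). With this ordering the decomposition $B(t)=C+tD$ records exactly the edges that stay within $L$ (in $C$) and those that cross into $L+\g$ (in $D$), and by~\eqref{eq:S} the entry $S_{ij}=[(\Id-C)^{-1}D]_{ij}$ is the generating function of weighted directed paths from $\tilde y_i$ to $\tilde y_j+\g$ that remain in $L$ and finish with a single boundary-crossing edge. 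Such paths are supported in the closed planar region $z^{-1}([0,1])$ with sources on $z^{-1}(0)$ and sinks on $z^{-1}(1)$, and since the weights are positive reals each entry is a nonnegative sum of positive monomials.

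Next I would apply the Lindstr\"om--Gessel--Viennot formalism of Theorem~\ref{thm:LGV}, specialized to this fundamental domain. For $r$-subsets $I,J\subseteq[p]$ the minor $\det S[I\mid J]$ is the signed sum over $\sigma\in\Sfr_r$ of weighted $r$-tuples of paths from $\{\tilde y_i\}_{i\in I}$ to $\{\tilde y_{\sigma(j)}+\g\}_{j\in J}$. The crucial point is \emph{non-permutability}: because $\Nt$ is planar and both the source cut $z^{-1}(0)$ and the target cut $z^{-1}(1)$ are simple curves crossing the cylinder once, the sources and sinks lie on the boundary of a planar disk in the compatible cyclic order, so any vertex-disjoint family of paths realizes only the identity permutation. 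Hence $\det S[I\mid J]$ collapses to a sum over families of pairwise vertex-disjoint paths, each contributing a product of positive edge weights, and $S$ is \emph{totally nonnegative}. This part follows the classical planar-network argument and should be routine once the cut and the ordering are set up correctly.

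The real content, and the step I expect to be the main obstacle, is upgrading nonnegativity to strict positivity of \emph{every} minor. Since a minor that admits no vertex-disjoint routing vanishes identically as a polynomial in $\x$, genericity of the positive substitution does not help: I must prove the purely combinatorial statement that for every pair of equicardinal subsets $I,J$ there exists at least one family of $r$ pairwise vertex-disjoint non-crossing paths from $\{\tilde y_i\}_{i\in I}$ to $\{\tilde y_j+\g\}_{j\in J}$ through the fundamental domain. This is a Menger/Hall-type feasibility condition on the planar strip network, and it can genuinely fail for poorly connected networks; I therefore expect the statement to require, and the proof to invoke, strong connectivity of $\Nt$ as in Conjecture~\ref{conj:minimal}. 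The plan here is to choose the cut $z^{-1}(0)$ so as to minimize obstructions and then to run a planar vertex-disjoint max-flow argument: strong connectivity supplies enough capacity across each level, while planarity converts a vertex-disjoint routing problem between correctly ordered boundary terminals into a flow feasibility condition verifiable by a counting argument. Isolating the exact connectivity hypothesis under which all non-crossing routings exist, and proving their existence constructively, is where I expect the difficulty to concentrate.

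Finally, once $S$ is shown to be totally positive, its $r$-th compound $\Wedge{r}{S}$ is again totally positive, so by Proposition~\ref{prop:wedge} and classical total positivity theory~\cite{Brenti} the characteristic polynomials $Q_N(t)$ and $Q_N^\plee{r}(t)$ have simple positive real roots. This yields Conjecture~\ref{conj:roots} and is precisely the motivation for establishing the present statement; the delicate strict-positivity step above is the gap that separates the routine total-nonnegativity estimate from the full claim.
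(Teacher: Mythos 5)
You should first be aware that the paper does not prove this statement: it is Conjecture~\ref{conj:tp_matrix}, recorded (following a suggestion of Richard Stanley) only as a possible route to Conjecture~\ref{conj:roots}, so there is no proof in the paper to compare yours against, and the question is whether your sketch settles an open problem. It does not, and you essentially say as much: the step carrying all the content --- strict positivity of every minor, i.e.\ the existence of a vertex-disjoint routing for every pair of equicardinal index sets --- is exactly where you ``expect the difficulty to concentrate,'' and nothing in your third paragraph goes beyond restating that difficulty. The preliminary parts are broadly reasonable but also incomplete: the identification via~\eqref{eq:S} of $S_{ij}$ as a weighted sum over paths that travel inside the level set $z^{-1}(0)$ and end with one crossing edge is correct, but your non-permutability claim is not automatic, since the sources $\liftt_i$ and sinks $\liftt_j+\g$ are interior vertices of the strip rather than boundary vertices of a disk, and a cyclic order on a cut does not by itself yield the linear order LGV needs; this requires an actual argument (and a specific choice of lift), not an appeal to ``the classical planar-network argument.''

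More seriously, the literal claim your routing program targets is false in general, so the main step cannot be completed as formulated. By~\eqref{eq:det_S} and Proposition~\ref{prop:Q_N}, $\det(t\Id-S)=t^{p-d}Q_N(t)$ for every local lift, where $p=|V|$ and $d=\deg Q_N$. Whenever $p>d$ --- the generic situation; for instance the lozenge network $\Nt_m$ has $p=m$ and $d=\lfloor m/2\rfloor$ --- the matrix $S$ is singular, its top-order minor vanishes identically in $\x$, and no local lift can make all minors of $S$ positive. Strong connectivity does not rescue this: take $N$ with two vertices and edges $u\to v$, $v\to u$ forming a single simple cycle of winding number $1$; for every local lift exactly one of the two edges lands in $D$, so $D$ and hence $S$ have rank one, and every $2\times2$ minor (indeed a whole column of $S$) vanishes. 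So your correct instinct that a connectivity hypothesis is missing is not enough: before your Menger/max-flow step can even be posed, the target must be repaired --- say, restricted to minors of order at most $d$, or replaced by an irreducible-TNN/oscillatory-type statement, which is in fact all that Conjecture~\ref{conj:roots} requires, since positivity and reality of the roots of $Q_N(t)$ concern only the nonzero spectrum of $S$. As it stands, your proposal gives at best a plausible route to total nonnegativity, leaves the strict-positivity core untouched, and aims that core at a claim which concrete examples refute.
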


      Indeed, by~\eqref{eq:det_S} the roots of $Q_N(t)$ are the eigenvalues of $S$ so Conjecture~\ref{conj:roots} follows since totally positive matrices are known to have positive real eigenvalues. We thank Richard Stanley for suggesting this way of proving Conjecture~\ref{conj:roots}.

We finish with another conjecture that potentially increases the value of the second part of Theorem~\ref{thm:recurrences}. Let us say that a cylindrical network $\Nt$ is \emph{strongly connected} if for any two vertices $\ut,\vt$ of $\Nt$ there exists an integer $\ell\in\Z$ and a directed path in $\Nt$ from $\ut$ to $\vt+\ell\g$. Equivalently, $\Nt$ is strongly connected if and only if the directed graph $N$ is strongly connected in the usual sense.

\begin{conjecture}\label{conj:minimal}
	Suppose we are given a strongly connected planar cylindrical network $\Nt$ such that the weights of the edges of $N$ are algebraically independent. Then for any integer $r\geq 1$ and the sequence $f$ from Theorem~\ref{thm:recurrences} the polynomial $Q_N^\plee{r}(t)$ is the \emph{minimal} recurrence polynomial for $f$.
\end{conjecture}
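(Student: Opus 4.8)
The plan is to combine the divisibility furnished by Theorem~\ref{thm:recurrences} with a Galois-theoretic argument that forces an ``all or nothing'' dichotomy on the roots of the minimal recurrence. Write $\mu_f(t)\in K[t]$ for the minimal recurrence polynomial of $f$. By the second part of Theorem~\ref{thm:recurrences}, $f$ satisfies a recurrence with characteristic polynomial $Q_N^\plee{r}(t)$, so $\mu_f(t)$ divides $Q_N^\plee{r}(t)$ in $K[t]$. Recall from~\eqref{eq:plee_pleh_intro} that the roots of $Q_N^\plee{r}(t)$ are the products $\gamma_A:=\prod_{i\in A}\gamma_i$, where $A$ ranges over the $r$-element subsets of $\{1,\dots,d\}$ and $\gamma_1,\dots,\gamma_d$ are the roots of $Q_N(t)$.

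The crux is the claim that, when the edge weights are algebraically independent, the Galois group of $Q_N(t)$ over $K$ is the full symmetric group $\Sfr_d$, and that the products $\gamma_A$ are pairwise distinct. Granting this, the proof concludes quickly. Distinctness of the $\gamma_A$ makes $Q_N^\plee{r}(t)$ squarefree, so $\mu_f(t)=\prod_{A\in R}(t-\gamma_A)$ for some family $R$ of $r$-subsets. Since $\mu_f(t)\in K[t]$, the set $R$ is stable under $\Sfr_d$; because $\Sfr_d$ acts \emph{transitively} on $r$-element subsets of $\{1,\dots,d\}$ (hence on the $\gamma_A$), the set $R$ must be either empty or everything. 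Finally, strong connectivity guarantees that for all large $\ell$ there is a vertex-disjoint family of $r$ paths from $\ubft$ to $\vbft_\ell$; for a planar network the Lindstr\"om--Gessel--Viennot expansion of $f(\ell)$ then has no sign cancellation, so $f$ is not eventually zero and $R\neq\varnothing$. Therefore $R$ consists of \emph{all} $r$-subsets and $\mu_f(t)=Q_N^\plee{r}(t)$. Note that this argument never requires identifying the individual coefficients in the eigenvalue expansion of $f$ (equivalently, the relevant minors of the eigenvectors of the matrix $S$ from~\eqref{eq:S}): the symmetry does all the work.

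It remains to establish the two inputs. The easier one is that $f$ is not eventually zero, which I would deduce from strong connectivity by exhibiting explicit non-crossing $r$-path families—for instance by routing the $r$ strands in parallel and letting them wind $\ell$ times around the cylinder—together with the sign-definiteness of the planar Lindstr\"om--Gessel--Viennot sum. The harder input, and the main obstacle, is that the Galois group is $\Sfr_d$ with the $\gamma_A$ distinct. Since $\gamma_1,\dots,\gamma_d$ are algebraic over $\Q(H_1,\dots,H_d)$ with $H_r=\sum_{\Cbf\in\Cyc^r(N)}\wt(\Cbf)$, this amounts to two field-theoretic statements: (i) $H_1,\dots,H_d$ are algebraically independent over $\Q$ (equivalently, so are the $\gamma_i$), and (ii) $\Q(H_1,\dots,H_d)$ is algebraically closed in $K=\Q(\x)$, so that $K$ and the splitting field $\Q(\gamma_1,\dots,\gamma_d)$ are linearly disjoint over $\Q(H_1,\dots,H_d)$ and the Galois group attains its maximal size $d!$.

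I expect (i)--(ii) to be the genuine difficulty, and the cleanest route is through specialization: it suffices to exhibit a single assignment of rational values to the (algebraically independent) edge weights for which $Q_N(t)$ specializes to a separable polynomial whose Galois group over $\Q$ is $\Sfr_d$ and whose $r$-fold root products $\gamma_A$ are pairwise distinct. Indeed, the generic Galois group contains every such specialized group and is contained in $\Sfr_d$, forcing equality, while a persistent identity $\gamma_A=\gamma_B$ at the generic point would survive specialization. Constructing such an ``$\Sfr_d$-generic'' specialization for an \emph{arbitrary} strongly connected planar cylindrical network is where strong connectivity must enter—via the abundance of vertex-disjoint cycle configurations, which should allow the $H_r$ to be moved independently—and it is the step I expect to require the most work. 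It is also naturally linked to Conjecture~\ref{conj:tp_matrix}: total positivity of the matrix $S$ from~\eqref{eq:S} already yields real and distinct eigenvalues $\gamma_i$, a first step toward the genericity needed here.
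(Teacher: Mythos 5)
This statement is Conjecture~\ref{conj:minimal} of the paper: the authors leave it open and give no proof, so there is nothing to compare your argument against. Your proposal therefore has to stand on its own, and as written it does not close the conjecture; it is a (sensible) reduction of the conjecture to two inputs, both of which remain unproven, and one of which rests on a claim that is false for cylindrical networks.

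The skeleton is fine: the minimal recurrence polynomial $\mu_f(t)$ divides $Q_N^\plee{r}(t)$, and if the products $\gamma_A$ are pairwise distinct and the Galois group of $Q_N(t)$ over $K$ acts transitively on them, then $\mu_f$ is either $1$ or all of $Q_N^\plee{r}(t)$, so everything hinges on (a) the Galois/genericity input and (b) $f\not\equiv 0$. For (a) you prove nothing: the assertion that algebraic independence of the edge weights forces the Galois group of $Q_N(t)$ over $K=\Q(\x)$ to be $\Sfr_d$, with distinct $r$-fold root products, is exactly the substance of the conjecture. Your statements (i)--(ii) (algebraic independence of $H_1,\dots,H_d$ over $\Q$, and $\Q(H_1,\dots,H_d)$ being algebraically closed in $K$) are the right formulation, but the proposed route --- constructing an ``$\Sfr_d$-generic'' rational specialization for an \emph{arbitrary} strongly connected planar cylindrical network --- is precisely where planarity, strong connectivity and independence must be exploited, and you explicitly defer it. So the core difficulty is untouched.

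For (b) there is a concrete error. You assert that for a planar network the Lindstr\"om--Gessel--Viennot expansion of $f(\ell)$ ``has no sign cancellation.'' That is true for planar networks in a disk with non-permutable endpoints, but it fails on a cylinder: pairwise vertex-disjoint path families can join $\ubft$ to $\sigma\vbft_\ell$ for non-identity (cyclically shifted) permutations $\sigma$, since strands can avoid each other by winding around. This is exactly why Theorem~\ref{thm:recurrences} is stated as a signed sum over all $\sigma\in\Sfr_r$, and why the paper verifies non-permutability by hand in each application (the proof of Theorem~\ref{thm:Schur}, and claim (ii) in the proof of Theorem~\ref{thm:domino}). Consequently distinct $\sigma$ contribute with opposite signs, and showing $f\not\equiv0$ (for all admissible $\ubft,\vbft$ and all $r\leq d$) requires a genuine argument --- say, isolating a monomial in the algebraically independent edge weights that occurs in only one term, or an asymptotic/Perron-type argument. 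Note also that your blanket claim ``strong connectivity guarantees a vertex-disjoint family of $r$ paths for all large $\ell$'' is false when $r>d$ (then $f$ is eventually zero); your dichotomy survives that case only because $Q_N^\plee{r}(t)=1$ there, but it shows the routing argument cannot be as automatic as described: it must produce $r$ disjoint winding strands, which presupposes $r\leq d$ and a careful treatment of the entry and exit segments.
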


\bibliographystyle{plain}
\bibliography{networks}

\begin{thebibliography}{10}

\bibitem{Per}
Per Alexandersson.
\newblock Stretched skew {S}chur polynomials are recurrent.
\newblock {\em J. Combin. Theory Ser. A}, 122:1--8, 2014.

\bibitem{Per2}
Per Alexandersson.
\newblock Polynomials defined by tableaux and linear recurrences.
\newblock {\em Electron. J. Combin.}, 23(1):Paper 1.47, 24, 2016.

\bibitem{BdFZJ}
Roger~E. Behrend, Philippe Di~Francesco, and Paul Zinn-Justin.
\newblock On the weighted enumeration of alternating sign matrices and
  descending plane partitions.
\newblock {\em J. Combin. Theory Ser. A}, 119(2):331--363, 2012.

\bibitem{BvL}
Fr\'ed\'eric Bosio and Marc A.~A. van Leeuwen.
\newblock A bijection proving the {A}ztec diamond theorem by combing lattice
  paths.
\newblock {\em Electron. J. Combin.}, 20(4):Paper 24, 30, 2013.

\bibitem{Brenti}
Francesco Brenti.
\newblock Unimodal, log-concave and {P}\'olya frequency sequences in
  combinatorics.
\newblock {\em Mem. Amer. Math. Soc.}, 81(413):viii+106, 1989.

\bibitem{BrentiS}
Francesco Brenti.
\newblock Determinants of super-{S}chur functions, lattice paths, and dotted
  plane partitions.
\newblock {\em Adv. Math.}, 98(1):27--64, 1993.

\bibitem{Carlitz}
L.~Carlitz.
\newblock Fibonacci notes. {IV}. {$q$}-{F}ibonacci polynomials.
\newblock {\em Fibonacci Quart.}, 13:97--102, 1975.

\bibitem{CS}
Gabriel~D. Carroll and David Speyer.
\newblock The cube recurrence.
\newblock {\em Electron. J. Combin.}, 11(1):Research Paper 73, 31 pp.
  (electronic), 2004.

\bibitem{Cigler}
Johann Cigler.
\newblock {$q$}-{F}ibonacci polynomials.
\newblock {\em Fibonacci Quart.}, 41(1):31--40, 2003.

\bibitem{CLP}
Henry Cohn, Michael Larsen, and James Propp.
\newblock The shape of a typical boxed plane partition.
\newblock {\em New York J. Math.}, 4:137--165, 1998.

\bibitem{FerrariSpohn}
Patrik~L. Ferrari and Herbert Spohn.
\newblock Domino tilings and the six-vertex model at its free-fermion point.
\newblock {\em J. Phys. A}, 39(33):10297--10306, 2006.

\bibitem{GPCube}
Pavel Galashin.
\newblock Periodicity and integrability for the cube recurrence.
\newblock {\em arXiv:1704.05570}, 2017.

\bibitem{GP1}
Pavel Galashin and Pavlo Pylyavskyy.
\newblock The classification of {Z}amolodchikov periodic quivers.
\newblock {\em Amer. J. Math.}, to appear.
\newblock \href{http://arxiv.org/abs/1603.03942}{arXiv:1603.03942}.

\bibitem{GP2}
Pavel Galashin and Pavlo Pylyavskyy.
\newblock Quivers with subadditive labelings: classification and integrability.
\newblock {\em arXiv:1606.04878}, 2016.

\bibitem{GP3}
Pavel Galashin and Pavlo Pylyavskyy.
\newblock Quivers with additive labelings: classification and algebraic
  entropy.
\newblock {\em arXiv:1704.05024}, 2017.

\bibitem{GV}
Ira Gessel and G\'erard Viennot.
\newblock Binomial determinants, paths, and hook length formulae.
\newblock {\em Adv. in Math.}, 58(3):300--321, 1985.

\bibitem{VW1}
Anthony~J. Guttmann, Aleksander~L. Owczarek, and Xavier~G. Viennot.
\newblock Vicious walkers and {Y}oung tableaux. {I}. {W}ithout walls.
\newblock {\em J. Phys. A}, 31(40):8123--8135, 1998.

\bibitem{VW2}
Christian Krattenthaler, Anthony~J. Guttmann, and Xavier~G. Viennot.
\newblock Vicious walkers, friendly walkers and {Y}oung tableaux. {II}. {W}ith
  a wall.
\newblock {\em J. Phys. A}, 33(48):8835--8866, 2000.

\bibitem{VW3}
Christian Krattenthaler, Anthony~J. Guttmann, and Xavier~G. Viennot.
\newblock Vicious walkers, friendly walkers, and {Y}oung tableaux. {III}.
  {B}etween two walls.
\newblock {\em J. Statist. Phys.}, 110(3-6):1069--1086, 2003.
\newblock Special issue in honor of Michael E. Fisher's 70th birthday
  (Piscataway, NJ, 2001).

\bibitem{LP}
Thomas Lam and Pavlo Pylyavskyy.
\newblock Crystals and total positivity on orientable surfaces.
\newblock {\em Selecta Math. (N.S.)}, 19(1):173--235, 2013.

\bibitem{Lindstrom}
Bernt Lindstr\"om.
\newblock On the vector representations of induced matroids.
\newblock {\em Bull. London Math. Soc.}, 5:85--90, 1973.

\bibitem{Soule}
C.~Soul\'e.
\newblock {\em Lectures on {A}rakelov geometry}, volume~33 of {\em Cambridge
  Studies in Advanced Mathematics}.
\newblock Cambridge University Press, Cambridge, 1992.
\newblock With the collaboration of D. Abramovich, J.-F. Burnol and J. Kramer.

\bibitem{Sp}
David~E. Speyer.
\newblock Perfect matchings and the octahedron recurrence.
\newblock {\em J. Algebraic Combin.}, 25(3):309--348, 2007.

\bibitem{EC2}
Richard~P. Stanley.
\newblock {\em Enumerative combinatorics. {V}olume 2}, volume~62 of {\em
  Cambridge Studies in Advanced Mathematics}.
\newblock Cambridge University Press, Cambridge, 1999.
\newblock With a foreword by Gian-Carlo Rota and appendix 1 by Sergey Fomin.

\bibitem{EC1}
Richard~P. Stanley.
\newblock {\em Enumerative combinatorics. {V}olume 1}, volume~49 of {\em
  Cambridge Studies in Advanced Mathematics}.
\newblock Cambridge University Press, Cambridge, second edition, 2012.

\bibitem{StembridgeQ}
John~R. Stembridge.
\newblock Nonintersecting paths, {P}faffians, and plane partitions.
\newblock {\em Adv. Math.}, 83(1):96--131, 1990.

\bibitem{Th}
William~P. Thurston.
\newblock Conway's tiling groups.
\newblock {\em Amer. Math. Monthly}, 97(8):757--773, 1990.

\bibitem{ZJ}
P.~Zinn-Justin.
\newblock Six-vertex, loop and tiling models: Integrability and combinatorics.
\newblock {\em arXiv:0901.0665}, 2009.

\bibitem{ZJdF}
Paul Zinn-Justin and Philippe Di~Francesco.
\newblock The quantum {K}nizhnik-{Z}amolodchikov equation, completely symmetric
  self-complementary plane partitions, and alternating-sign matrices.
\newblock {\em Teoret. Mat. Fiz.}, 154(3):387--408, 2008.

\end{thebibliography}

\end{document}